\newtheorem{theor}{Theorem}[subsection]
\newtheorem{lem}[theor]{Lemma}
\newtheorem{prop}[theor]{Proposition}
\newtheorem{cor}[theor]{Corollary}
\theoremstyle{definition}
\theoremstyle{remark}
\newtheorem{rem}[theor]{Remark}
\newtheorem{rems}[theor]{Remarks}
\newtheorem{ex}[theor]{Example}
\newtheorem{exs}[theor]{Examples}
\def\Z{{\mathbb{Z}}}
\def\im{{\mbox{im}}}
\def\A{{\mathcal{A}}}
\def\B{{\mathcal{B}}}
\def\Hom{{\rm{Hom}}}
\def\deg{{\rm{deg}}}
\def\sgn{{\rm{sgn}}}
\def\char{{\rm{char}}}
\def\sing{{\mathsf{sing}}}
\def\cell{{\mathsf{cell}}}
\begin{document}

\title{Labeled homology of higher-dimensional automata}

\author{Thomas Kahl}

\address{Centro de Matem\'atica,
	Universidade do Minho, Campus de Gualtar,\\
	4710-057 Braga,
	Portugal
}

\thanks{This research was financed by Portuguese funds through FCT - Fundação para a Ciência e a Tecnologia (project UID/MAT/00013/2013).}

\email{kahl@math.uminho.pt}

\subjclass[2010]{55N35, 55U15, 68Q85, 68Q45, 68Q60}

\keywords{Higher-dimensional automata, labeled homology, cubical homology, cubical dimap}


\begin{abstract}
We construct labeling homomorphisms on the cubical homology of higher-dimensional automata and show that they are natural with respect to cubical dimaps and compatible with the tensor product of HDAs. We also indicate two possible applications of labeled homology in concurrency theory.	
\end{abstract}

\maketitle 

\section{Introduction}

\subsection{Higher-dimensional automata} 
\begin{sloppypar}
Higher-dimensional automata provide a powerful model for concurrent systems. A higher-dimensional automaton (HDA) over a monoid $M$ is a precubical set (i.e., a cubical set without degeneracies) with initial and final states and with 1-cubes labeled by elements of $M$ such that opposite edges of 2-cubes have the same label. The vertices of an HDA represent the states of a concurrent system, the labeled edges model the actions of the system, and a cube of dimension $\geq 2$ indicates that the actions starting at its origin are independent in the sense that they may be executed in any order, or even simultaneously, without any observable difference. The concept of higher-dimensional automaton goes back to Pratt \cite{Pratt}. The notion defined here is essentially the one introduced by van Glabbeek (see \cite{vanGlabbeek}).
\end{sloppypar}

Let us consider the example of Peterson's algorithm \cite{Peterson} to indicate in more detail how HDAs can be used to model concurrent systems. Peterson's algorithm is a protocol designed to give two processes fair and mutually exclusive access to a shared resource. The algorithm is based on three shared variables, namely the boolean variables $\mathtt{b_0}$ and $\mathtt{b_1}$ and the turn variable $\mathtt{t}$, whose possible values are the process IDs---let us assume that these are $0$ and $1$. Process $i$ has four local states and moves from one to the next by executing the following actions: It first sets variable $\mathtt{b_i}$ to $1$ in order to indicate that it intends to enter the ``critical section" and access the shared resource. Then it gives priority to the other process by setting the turn variable $\mathtt{t}$ to $1-i$. After this, it waits until the other process does not intend to enter the critical section or it is its own turn to do so. When this ``guard condition" is satisfied, the process enters the critical section and accesses the shared resource. This action has no effect on the shared variables $\mathtt{b_0}$,  $\mathtt{b_1}$, and $\mathtt{t}$. After having used the shared resource, process $i$ leaves the critical section and sets variable $\mathtt{b_i}$ to $0$ again. The procedure is now repeated arbitrarily often or forever. 

An HDA model of the reachable part of the system given by Peterson's algorithm is depicted in Figure \ref{peterHDA}. The vertices of the HDA correspond to the reachable global states of the system, which are quintuples whose components are  local states of the processes and values of the variables. The HDA has two states, marked with a small incoming arrow and a double circle in Figure \ref{peterHDA}, that are at the same time initial and final states. In the upper initial and final state, $\mathtt{t} = 0$; in the lower one, $\mathtt{t} = 1$. The boolean variables $\mathtt{b_0}$ and $\mathtt{b_1}$ are $0$ in both of these states, and each process is in its initial local state. The directed edges starting in a given vertex represent the actions of the processes that are enabled in the corresponding global state. These actions are indicated in the transition labels, indexed by the respective process IDs. The monoid of labels is the free monoid generated by the transition labels. The squares are introduced in order to indicate independence of actions: two actions are independent in a state where both are enabled if they can be executed one after the other in either order and the two sequential executions of the actions lead to the same state. In general, an HDA may also contain higher-dimensional cubes in order to indicate the independence of more than two actions. A more formal description of the construction of HDA models for shared-variable systems can be found in \cite{transhda}.

\begin{figure}[t]
	\center
	\begin{tikzpicture}[initial text={},on grid]

	\path[draw, fill=lightgray] (0,0)--(1,0)--(1,-1)--(0,-1)--cycle;    
	
	\path[draw, fill=lightgray] (1,0)--(2,0)--(2,-1)--(1,-1)--cycle;
	
	\path[draw, fill=lightgray] (2,0)--(4,0)--(3,-1)--(2,-1)--cycle;
	
	\path[draw, fill=lightgray] (0,-1)--(1,-1)--(1,-3)--(0,-3)--cycle;
	
	\path[draw, fill=lightgray] (1,-1)--(2,-1)--(2,-2)--(1,-3)--cycle;
	
	\path[draw, fill=lightgray] (3,-1)--(4,0)--(4,-2)--(3,-2)--cycle;
	
	\path[draw, fill=lightgray] (4,0)--(5,0)--(5,-2)--(4,-2)--cycle;
	
	\path[draw, fill=lightgray] (2,-2)--(3,-2)--(3,-3)--(1,-3)--cycle;
	
	\path[draw, fill=lightgray] (3,-2)--(4,-2)--(4,-3)--(3,-3)--cycle;
	
	\path[draw, fill=lightgray] (4,-2)--(5,-2)--(5,-3)--(4,-3)--cycle;
	
	\draw[red,very thick] (3,-1) to [out=225,in=45] (-1.35,-1.65); 
	
	\draw[->,red,very thick] (-1.35,-1.65) to [out=225,in=180] (-0.1,-4);
	
	\node at (-2,-3) {\scalebox{0.85}{$\mathtt{t\!\coloneqq_0\! 1}$}};
	
	\draw[red,very thick] (2,-2) to [out=45,in=225] (6.35,-1.35); 
	
	\draw[->,red,very thick] (6.35,-1.35) to [out=45,in=0] (5.1,1);

	\node at (7,0) {\scalebox{0.85}{$\mathtt{t\!\coloneqq_1\! 0}$}};

	\node[state,minimum size=0pt,inner sep =2pt,fill=white] (q_0) at (0,0)  {}; 
	
	\node[state,minimum size=0pt,inner sep =2pt,fill=white,accepting, initial,initial where=above,initial distance=0.2cm] (q_1) [right=of q_0,xshift=0cm] {};
	
	\node[state,minimum size=0pt,inner sep =2pt,fill=white] (q_2) [right=of q_1,xshift=0cm] {};
	
	\node[state,minimum size=0pt,inner sep =2pt,fill=white] (q_3) [right=of q_2,xshift=1cm] {};
	
	\node[state,minimum size=0pt,inner sep =2pt,fill=white] (q_4) [right=of q_3,xshift=0cm] {};
	
	\node[state,minimum size=0pt,inner sep =2pt,fill=white] (q_5) [below=of q_0,xshift=0cm] {};  
	
	\node[state,minimum size=0pt,inner sep =2pt,fill=white] (q_6) [right=of q_5,xshift=0cm] {};
	
	\node[state,minimum size=0pt,inner sep =2pt,fill=white] (q_7) [right=of q_6,xshift=0cm] {};
	
	\node[state,minimum size=0pt,inner sep =2pt,fill=white] (q_8) [right=of q_7,xshift=0cm] {};
	
	\node[state,minimum size=0pt,inner sep =2pt,fill=white] (q_9) [below=of q_7,xshift=0cm] {};
	
	\node[state,minimum size=0pt,inner sep =2pt,fill=white] (q_10) [right=of q_9,xshift=0cm] {};
	
	\node[state,minimum size=0pt,inner sep =2pt,fill=white] (q_11) [right=of q_10,xshift=0cm] {};
	
	\node[state,minimum size=0pt,inner sep =2pt,fill=white] (q_12) [right=of q_11,xshift=0cm] {};
	
	\node[state,minimum size=0pt,inner sep =2pt,fill=white] (q_17) [below=of q_12,xshift=0cm] {};
	
	\node[state,minimum size=0pt,inner sep =2pt,fill=white,accepting,initial,initial where=below,initial distance=0.2cm] (q_16) [left=of q_17,xshift=0cm] {};
	
	\node[state,minimum size=0pt,inner sep =2pt,fill=white] (q_15) [left=of q_16,xshift=0cm] {};
	
	\node[state,minimum size=0pt,inner sep =2pt,fill=white] (q_14) [left=of q_15,xshift=-1cm] {};
	
	\node[state,minimum size=0pt,inner sep =2pt,fill=white] (q_13) [left=of q_14,xshift=0cm] {};
	
	\node[state,minimum size=0pt,inner sep =2pt,fill=white] (p_0) [above=of q_4,xshift=0cm] {};
	
	\node[state,minimum size=0pt,inner sep =2pt,fill=white] (p_1) [below=of q_13,xshift=0cm] {};

	\path[->] 
	(p_0) edge[right,red,very thick] node[black] {\scalebox{0.85}{$\mathtt{crit_0}$}} (q_4)
	
	(q_0) edge[below,green] node[black] {\scalebox{0.85}{$\mathtt{b_1\!\!\coloneqq_1\!\! 0}$}} (q_1)
	(q_1) edge[below,green] node[black] {\scalebox{0.85}{$\mathtt{b_1\!\!\coloneqq_1\!\! 1}$}} (q_2)
	(q_2) edge[below,green] node[black] {\scalebox{0.85}{$\mathtt{t\!\!\coloneqq_1\!\! 0}$}} (q_3)
	(q_4) edge[above,red] node[black] {\scalebox{0.85}{$\mathtt{b_0\!\!\coloneqq_0\!\! 0}$}} (q_3)
	
	(q_0) edge[left,very thick,blue] node[black] {\scalebox{0.85}{$\mathtt{b_0\!\!\coloneqq_0\!\! 1}$}} (q_5)
	(q_1) edge[above] node {} (q_6)
	(q_2) edge[above] node {} (q_7)
	(q_3) edge[above,red,very thick] node {} (q_8)
	
	(q_5) edge[above] node {} (q_6)
	(q_6) edge[above] node {} (q_7)
	(q_7) edge[above] node {} (q_8)
	
	(q_7) edge[above] node {} (q_9)
	(q_10) edge[above] node {} (q_8)
	(q_11) edge[above] node {} (q_3)
	(q_12) edge[right, very thick,blue] node[black] {\scalebox{0.85}{$\mathtt{t\!\!\coloneqq_1\!\! 0}$}} (q_4)
	
	(q_10) edge[above] node {} (q_9)
	(q_11) edge[above] node {} (q_10)
	(q_12) edge[above] node {} (q_11)
	
	(q_5) edge[left,very thick, blue] node[black] {\scalebox{0.85}{$\mathtt{t\!\!\coloneqq_0\!\! 1}$}} (q_13)
	(q_6) edge[above] node {} (q_14)
	(q_14) edge[above,red,very thick] node {} (q_9)
	(q_15) edge[above] node {} (q_10)
	(q_16) edge[above] node {} (q_11)
	(q_17) edge[right,very thick,blue] node[black] {\scalebox{0.85}{$\mathtt{b_1\!\!\coloneqq_1\!\! 1}$}} (q_12)
	
	(q_13) edge[below,red] node[black] {\scalebox{0.85}{$\mathtt{b_1\!\!\coloneqq_1\!\! 0}$}} (q_14)
	(q_15) edge[above,green] node[black] {\scalebox{0.85}{$\mathtt{t\!\!\coloneqq_0\!\! 1}$}} (q_14)
	(q_16) edge[above,green] node[black] {\scalebox{0.85}{$\mathtt{b_0\!\!\coloneqq_0\!\! 1}$}} (q_15)
	(q_17) edge[above,green] node[black] {\scalebox{0.85}{$\mathtt{b_0\!\!\coloneqq_0\!\! 0}$}} (q_16)
	
	(p_1) edge[left,red,very thick] node[black] {\scalebox{0.85}{$\mathtt{crit_1}$}} (q_13)
	
	(q_3) edge[above, bend right,green,very thick] node[black] {\scalebox{0.85}{$\mathtt{crit_1}$}} (q_0)
	(q_14) edge[below, bend right,green,very thick] node[black] {\scalebox{0.85}{$\mathtt{crit_0}$}} (q_17)
	;

	\end{tikzpicture}
	\caption{HDA for Peterson's algorithm with colored homology generators. Parallel arrows are supposed to have the same label.}\label{peterHDA}
\end{figure}
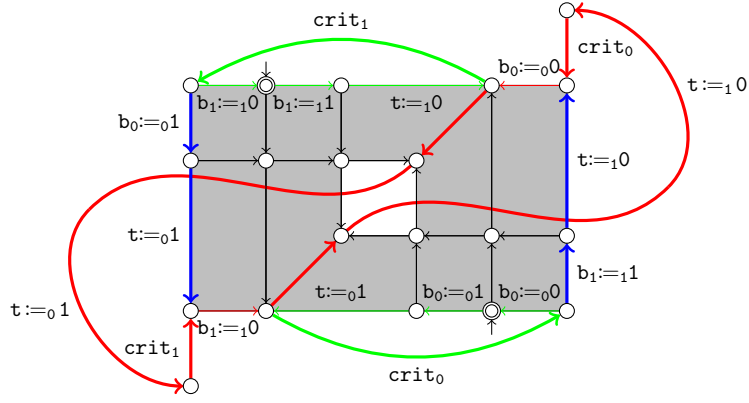

\subsection{Labeled homology} As demonstrated in the literature, concepts and methods from algebraic topology may be employed profitably in concurrency theory (see e.g. \cite{FajstrupGR, FGHMR}). The purpose of this paper is to introduce \emph{labeled homology} of HDAs and to establish results about it that enable one to use the information contained in the homology of HDAs in the analysis of concurrent systems. 

We focus on HDAs over the free monoid on an alphabet $\Sigma$. Given such an HDA $\A$, we consider its cubical homology and the exterior algebra $\Lambda(\Sigma)$ and show that the labeling function of $\A$ induces \emph{labeling homomorphisms} 
\[\ell_{\A} = \ell_{\A}^n \colon H_n(\A) \to \Lambda(\Sigma) \quad (n\geq 0).\]
The labeled homology of an HDA is its homology together with the sequence of labeling homomorphisms.

Let us describe the labeled homology of the HDA $\A$ modeling the system given by Peterson's algorithm. The integral homology of $\A$ is given by $H_0(\A) \cong \Z$, $H_1(\A) \cong \Z^5$, and $H_{\geq 2}(\A) = 0$. The labeling homomorphism in degree 0 is the obvious monomorphism   $H_0(\A) \to \Lambda(\Sigma)$, and so the only actually interesting dimension is 1. If, as in $\A$, the edge labels of an HDA are indecomposable, the label of a $1\mbox{-}$di\-men\-sional homology class is simply the linear combination of labels corresponding to the linear combination of edges in a representing cycle of the class. Hence the labels of the two generators $\alpha_0$ and $\alpha_1$ of $H_1(\A)$  depicted in green in Figure \ref{peterHDA} are given by 
\[
\ell_{\A}(\alpha_i) = \mathtt{b_i\!\!\coloneqq_i\!\!1} + \mathtt{t\!\!\coloneqq_i\!\!(1-i)} + \mathtt{crit_i} + \mathtt{b_i\!\!\coloneqq_i\!\!0}.
\] 
The classes $\alpha_0$ and $\alpha_1$ represent the two processes, executing alone. A third generator of $H_1(\A)$, depicted in red in Figure \ref{peterHDA}, has label $\ell_{\A}(\alpha_0)+\ell_{\A}(\alpha_1)$ and corresponds to an execution where the two processes alternately access the shared resource. Finally, $H_1(\A)$ has two generators with zero label, which are represented by the thick multicolored cycles in Figure \ref{peterHDA} and reflect the synchronization (coordination) of the processes enforced by the guard condition that must be satisfied before a process enters the critical section: The existence of, say, the left of these homology classes is due the fact that process 1 is blocked in the upper right state of the inner hole of $\A$. If process 1 was allowed to enter the critical section in this state, the class would disappear---and the system would lose the important property of starvation freedom as it would become possible that process 0 requests access to the critical section without ever obtaining it.

The example of Peterson's algorithm shows that the homology of an HDA has meaning from a computer science point of view. The labeling homomorphisms help to interpret the information contained in the homology of HDAs. We introduce labeled homology in Section \ref{secLabels}. The earlier sections contain preparatory material on precubical sets and HDAs (Section \ref{secHDA}) and cubical chains and homology  (Section \ref{secCubchains}).

\subsection{Tensor-product HDAs} An important property of labeled homology is that it is compatible with the tensor product of HDAs, which models the interleaving parallel composition of independent concurrent systems. If two completely independent and disjoint concurrent systems are combined into one, then an HDA model of this interleaving parallel composition of the systems is given by the tensor product of the HDA models of the two systems (see \cite{transhda}). In Section \ref{secTensor} we establish a cross product formula for the computation of the labeling homomorphisms of a tensor-product HDA from those of its factors.

A concurrent system will, of course, normally not be the interleaving  of other systems. It is, however, frequently the case that a concurrent system contains subsystems that are independent from each other. Labeled homology and, in particular, our result on the labeled homology of tensor-product HDAs can be used to aid the analysis of the independence structure of concurrent systems. This will be discussed in Section \ref{secIndep}.

\subsection{Cubical dimaps}

\begin{figure}[t]
	\begin{tikzpicture}[initial text={},on grid]

	\draw[] (4,0) to [out=225,in=45] (-0.35,-1.65); 
	
	\draw[->] (-0.35,-1.65) to [out=225,in=180] (0.9,-3);
	
	\node[align=left] at (-1,-2) {\scalebox{0.85}{$\mathtt{b_0\!\!\coloneqq_0\!\! 1;}$}\\\scalebox{0.85}{$\mathtt{t\!\!\coloneqq_0\!\! 1;}$}\\\scalebox{0.85}{$\mathtt{crit_1;}$}\\\scalebox{0.85}{$\mathtt{b_1\!\!\coloneqq_1\!\! 0}$}};

	\draw[] (1,-3) to [out=45,in=225] (5.35,-1.35); 
	
	\draw[->] (5.35,-1.35) to [out=45,in=0] (4.1,0);
	
	\node[align=left] at (6.1,-1) {\scalebox{0.85}{$\mathtt{b_1\!\!\coloneqq_1\!\! 1;}$}\\\scalebox{0.85}{$\mathtt{t\!\!\coloneqq_1\!\! 0;}$}\\\scalebox{0.85}{$\mathtt{crit_0;}$}\\\scalebox{0.85}{$\mathtt{b_0\!\!\coloneqq_0\!\! 0}$}};
	
	\node[align=left] at (1.6,-1.8) {\scalebox{0.85}{$\mathtt{b_0\!\!\coloneqq_0\!\! 1;}$}\\\scalebox{0.85}{$\mathtt{t\!\!\coloneqq_0\!\! 1}$}}; 
	
	\node[align=left] at (3.4,-1.3) {\scalebox{0.85}{$\mathtt{b_1\!\!\coloneqq_1\!\! 1;}$}\\\scalebox{0.85}{$\mathtt{t\!\!\coloneqq_1\!\! 0}$}};

	\node[state,minimum size=0pt,inner sep =2pt,fill=white,accepting,initial,initial where=above,initial distance=0.2cm] (q_1) at (1,0) {};

	\node[state,minimum size=0pt,inner sep =2pt,fill=white] (q_3) [right=of q_2,xshift=1cm] {};

	\node[state,minimum size=0pt,inner sep =2pt,fill=white,accepting,initial,initial where=below,initial distance=0.2cm] (q_16) [left=of q_17,xshift=0cm] {};

	\node[state,minimum size=0pt,inner sep =2pt,fill=white] (q_14) [left=of q_15,xshift=-1cm] {};

	\path[->] 
	
	(q_1) edge[below] node {\scalebox{0.85}{$\mathtt{b_1\!\!\coloneqq_1\!\! 1;t\!\!\coloneqq_1\!\! 0}$}} (q_3)

	(q_1) edge[right] node{}(q_14)

	(q_16) edge[right] node {} (q_3)

	(q_16) edge[above] node {\scalebox{0.85}{$\mathtt{b_0\!\!\coloneqq_0\!\! 1;t\!\!\coloneqq_0\!\! 1}$}} (q_14)

	(q_3) edge[above, bend right] node {\scalebox{0.85}{$\mathtt{crit_1;b_1\!\!\coloneqq_1\!\! 0}$}} (q_1)
	(q_14) edge[below, bend right] node {\scalebox{0.85}{$\mathtt{crit_0;b_0\!\!\coloneqq_0\!\! 0}$}} (q_16)
	;
	\end{tikzpicture}
	\caption{Small HDA model for Peterson's algorithm}\label{peterHDAmin}
\end{figure}
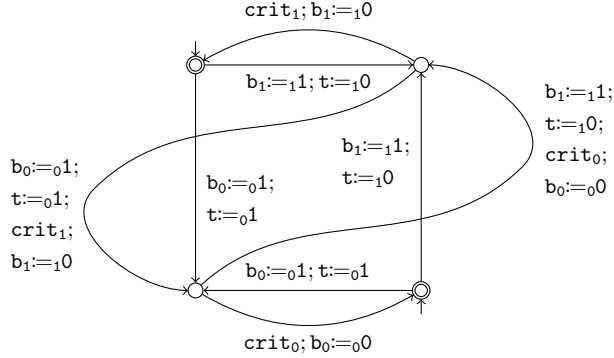

In Section \ref{secNat} we show that labeled homology is natural with respect not only to morphisms of HDAs but also to \emph{cubical dimaps}. Roughly speaking, a cubical dimap from an HDA to another one is a label-preserving continuous map between the geometric realizations that also is a directed map---or dimap---in the sense that it respects the directed topology of the geometric realization.
Cubical dimaps permit one to compare HDAs that model concurrent systems at different abstraction levels and do not admit ordinary  morphisms between them. For instance, although the HDA in Figure \ref{peterHDA} and the homotopy equivalent smaller one depicted in Figure \ref{peterHDAmin} both model the system given by Peterson's algorithm (cf. \cite[7.2]{topabs}), there do not exist any morphisms from one to the other. The two HDAs may, however, be related by a cubical dimap. Cubical dimaps are the subject of Section \ref{secDimaps}.

The naturality of labeled homology implies that it is invariant under cubical dimaps that are homotopy equivalences. Therefore labeled homology can be considered a directed homotopy invariant of HDAs. Since it does not only depend on the directed topology of the underlying precubical sets, it is, however, not a concept of directed homology as those considered, for instance, in \cite{DubutGG,FahrenbergDiH,GaucherHomol,GoubaultJensen,GrandisBook,hgraph,KrishnanFlowCut}.

As a possible application of labeled homology, based on its naturality, we discuss in Section \ref{secSpec} a necessary condition for an HDA to implement a specification that is given by another HDA.

\section{Precubical sets and HDAs} \label{secHDA}

This section presents some fundamental concepts and facts about precubical sets and higher-dimensional automata. The material is mostly taken from \cite{weakmor}.

\subsection{Precubical sets} \label{precubs}

A \emph{precubical set} is a graded set $P = (P_n)_{n \geq 0}$ with  \emph{boundary} or \emph{face operators} $d^k_i\colon P_n \to P_{n-1}$ $(n>0,\;k= 0,1,\; i = 1, \dots, n)$ satisfying the relations $d^k_i\circ d^l_{j}= d^l_{j-1}\circ d^k_i$ $(k,l = 0,1,\; i<j)$. If $x\in P_n$, we say that $x$ is of  \emph{degree} or \emph{dimension} $n$ and write $\deg(x) = n$. The elements of degree $n$ are called the \emph{$n$-cubes} of $P$. The elements of degree $0$ are also called the \emph{vertices} of $P$, and the $1$-cubes are also called the \emph{edges} of $P$. Given an $n$-cube $x$, we say that the vertex $\underbracket[0.5pt]{d_1^{0} \cdots d_1^{0}}_{n\;   \text{times}}x$ is its \emph{initial vertex} and that $d_1^{1} \cdots d_1^{1}x$ is its \emph{final vertex}. We say that a face $d^k_ix$ of a cube $x$ is  a \emph{front (back) face} of $x$ if $k=0$ $(k=1)$. A \emph{morphism} of precubical sets is a morphism of graded sets that is compatible with the boundary operators. A \emph{precubical subset} of a precubical set $P$ is a graded subset of $P$ that is stable under the boundary operators.

\subsection{Intervals} 
\begin{sloppypar}
Let $k$ and $l$ be two integers such that $k \leq l$. The \emph{precubical interval}  $\llbracket k,l \rrbracket$ is the  precubical set defined by $\llbracket k,l \rrbracket_0 = \{k,\dots , l\}$, $\llbracket k,l \rrbracket_1 =  \{{[k,k+ 1]}, \dots , {[l- 1,l]}\}$, $d_1^0[j-1,j] = j-1$, $d_1^1[j-1,j] = j$, and $\llbracket k,l \rrbracket_{\geq 2} = \emptyset$. 
\end{sloppypar} 

\subsection{Tensor product}
The category of precubical sets is a monoidal category. Given two precubical sets $P$ and $Q$, the tensor product $P\otimes Q$ is  defined by \[(P\otimes Q)_n = \coprod \limits_{p+q = n} P_p\times Q_q\] and
$$d_i^k(x,y) = \left\{ \begin{array}{ll} (d_i^kx,y), & 1\leq i \leq \deg(x),\\
(x,d_{i-\deg(x)}^ky), & \deg(x)+1 \leq i \leq \deg(x) + \deg(y).
\end{array}\right.$$ 
The $n$-fold tensor product of a precubical set $P$ is denoted by $P^{\otimes n}$. Here we use the convention $P^{\otimes 0} = \llbracket 0,0\rrbracket = \{0\}$. 

\subsection{Precubical cubes}
\begin{sloppypar} 
The \emph{precubical $n\mbox{-}$cube} is the $n$-fold tensor product ${\llbracket 0,1\rrbracket^ {\otimes n}}$. The only element of degree $n$ in $\llbracket 0,1\rrbracket^ {\otimes n}$ will be denoted by $\iota_n$. We thus have $\iota_0 = 0$ and $\iota_n = (\underbracket[0.5pt]{ [0,1] ,\dots , [ 0,1]}_{n\; {\text{times}}})$ for $n>0$. Given an element $x$ of degree $n$ of a precubical set $P$, there exists a unique morphism of precubical sets $\llbracket 0,1\rrbracket ^{\otimes n}\to P$ that sends $\iota_n$ to $x$. This morphism will be denoted by $x_{\sharp}$.
\end{sloppypar}

\subsection{Paths} 
A \emph{path of length $k$} in a precubical set $P$ is a morphism of precubical sets $\omega \colon \llbracket 0,k \rrbracket \to P$. If $\omega(0) = \omega(k)$, we say that $\omega$ is a \emph{loop}. The set of paths in $P$ is denoted by $P^{\mathbb I}$. The \emph{concatenation} $\omega \cdot\nu$  of two paths $\omega \colon \llbracket 0,k \rrbracket \to P$ and $\nu \colon \llbracket 0,l \rrbracket \to P$ with $\omega (k) = \nu (0)$ is defined in the obvious way. Note that for any path $\omega \in P^{\mathbb I}$ of length $k \geq 1$, there exists a unique sequence $(x_1, \dots , x_k)$ of elements of $P_1$ such that $d_1^0x_{j+1} = d_1^1x_j$ for all $1\leq j < k$ and $\omega = x_{1\sharp} \cdot\, \cdots \,\cdot x_{k\sharp}$.

\subsection{Geometric realization} 
The \emph{geometric realization} of a precubical set $P$ is the quotient space \[|P|=\left(\coprod _{n \geq 0} P_n \times [0,1]^n\right)/\sim\] where the sets $P_n$ are given the discrete topology and the equivalence relation is given by
\[(d^k_ix,u) \sim (x,\delta_i^k(u)), \quad  x \in P_{n+1},\; u\in [0,1]^n,\; i \in  \{1, \dots, n+1\},\; k \in \{0,1\}.\] 
Here the continuous maps $\delta_i^ k \colon [0,1]^ n \to [0,1]^ {n+1}$ are defined by \[\delta_i^ k(u_1, \dots, u_n) = (u_1, \dots, u_{i-1},k,u_i, \dots, u_n).\]
The geometric realization of a morphism of precubical sets $f\colon P \to Q$ is the continuous map $|f|\colon |P| \to |Q|$ given by $|f|([x,u])= [f(x),u]$.
 
The geometric realization of a precubical set $P$ is a CW complex. The $n$-skeleton of $|P|$ is the geometric realization of the precubical subset $P_{\leq n}$ of $P$ defined by $(P_{\leq n})_m = P_m$ $(m\leq n)$ and $(P_{\leq n})_m = \emptyset$ $(m> n)$. As a CW complex, the geometric realization of a precubical set is a compactly generated Hausdorff space. All spaces in this paper are compactly generated Hausdorff spaces, and constructions such as products are performed in the category of these spaces.

The geometric realization is a comonoidal functor with respect to the natural continuous map ${\psi_{P,Q}\colon |P\otimes Q| \to |P| \times |Q|}$ given by 
\begin{align*}
\MoveEqLeft{\psi_{P,Q} ([(x,y),(u_1,\dots ,u_{\deg(x)+\deg(y)})])}\\ &= ([x,(u_1,\dots , u_{\deg(x)})],[y,(u_{\deg(x)+1}, \dots u_{\deg(x)+\deg(y)}]).
\end{align*} 
Since we are working in the category of compactly generated Hausdorff spaces, the map $\psi_{P,Q}$ is a homeomorphism, even a cellular isomorphism, and we may use it to identify the spaces $|P\otimes Q|$ and $|P| \times |Q|$. 

The geometric realization of the precubical interval $\llbracket k,l \rrbracket$ will be identified with the closed interval $[k,l]$ by means of the homeomorphism $|\llbracket k,l \rrbracket| \to [k,l]$ given by $[j,()] \mapsto j$ and $[[j-1,j],t] \mapsto j-1+t$.

\subsection{Singular precubical sets} The \emph{singular precubical set} of a space $X$ is the precubical set $SX$ where the $n$-cubes are the continuous maps $\sigma \colon [0,1]^n\to X$ and the boundary operators are given by $d_i^k\sigma = \sigma \circ \delta_i^k$. The singular precubical set is functorial in the obvious way, and the functor $S$ is right adjoint to the geometric realization. 

\subsection{Higher-dimensional automata} \label{HDAdef}

Let $M$ be a monoid. A \emph{higher-dimensional automaton over} $M$ (abbreviated $M\mbox{-}$HDA or simply HDA) is a tuple \[\A = (P_{\A},I_{\A},F_{\A}, \lambda_{\A})\] where $P_{\A}$ is a precubical set, $I_{\A} \subseteq (P_{\A})_0$ is a set of \emph{initial states}, $F_{\A} \subseteq (P_{\A})_0$ is a set of \emph{final} or \emph{accepting  states}, and $\lambda \colon (P_{\A})_1 \to M$ is a map, called the \emph{labeling function}, such that $\lambda_{\A} (d_i^0x) = \lambda_{\A} (d_i^1x)$ for all $x \in (P_{\A})_2$ and $i \in \{1,2\}$ (cf. \cite{vanGlabbeek, weakmor}). We shall abuse notation and write $\A$ instead of $P_{\A}$. A \emph{morphism} from an $M\mbox{-}$HDA $\B$ to an $M\mbox{-}$HDA $\A$ is a morphism of precubical sets  $f\colon \B \to \A$  such that $f(I_{\B}) \subseteq I_{\A}$, $f(F_{\B}) \subseteq F_{\A}$, and  $\lambda_{\A}(f(x)) = \lambda_{\B}(x)$ for all $x \in \B_1$. An $M$-HDA $\B$ is a \emph{subautomaton} of an $M$-HDA $\A$ if $\B$ is a precubical subset of $\A$, $I_{\B} \subseteq I_{\A}$, $F_{\B}\subseteq F_{\A}$, and $\lambda_{\B}(x) = \lambda_{\A}(x)$ for all $x\in {\B_1}$.

\subsection{Extended labeling function} 

The \emph{extended labeling function} of an $M\mbox{-}$HDA $\A$ is the map $\overline{\lambda}_{\A} \colon \A ^{\mathbb I} \to M$ defined as follows: If $\omega = x_{1\sharp} \cdot\, \cdots \,\cdot x_{k\sharp}$ for a sequence $(x_1, \dots , x_k)$ of elements of $\A_1$ such that $d_1^0x_{j+1} = d_1^1x_j$ $(1\leq j < k)$, then we set \[\overline{\lambda}_{\A} (\omega) = \lambda_{\A} (x_1) \cdot\, \cdots \,\cdot \lambda_{\A} (x_k);\] if $\omega$ is a path of length $0$, then we set  $\overline{\lambda}_{\A} (\omega ) = 1$.

\section{The cubical chain complex of a precubical set} \label{secCubchains}

Throughout this article we work over a fixed principal ideal domain  $R$. In order to simplify the presentation, we will usually suppress the ring $R$ from the notation. All graded modules are $\Z$-graded, and we adopt  the usual convention $V^i = V_{-i}$. 

In this section we define the cubical chain complex and the cubical homology and cohomology of a precubical set and relate cubical chains to singular and cellular chains.

\subsection{Cubical (co)chains and (co)homology} Let $P$ be a precubical set. The \emph{cubical chain complex} of $P$ is the nonnegative chain complex $C_*(P)$ where $C_n(P)$ is the free module generated by $P_n$ and the boundary operator $d\colon C_n(P) \to C_{n-1}(P)$  is given by \[dx = \sum \limits_{i=1}^{n}(-1)^i(d^0_ix -d^1_ix), \quad x \in P_n.\]  
It is clear that the cubical chain complex is functorial. The \emph{cubical homology} of $P$, denoted  by $H_*(P)$, is the homology of $C_*(P)$. 

Given a module $G$, we view it as a chain complex concentrated in degree $0$ and define the \emph{cubical cochain complex} of $P$ \emph{with coefficients in $G$}, $C^*(P;G)$, to be the Hom complex $\Hom (C_*(P),G)$. The \emph{cubical cohomology of P with coefficients in $G$} is the graded module  $H^*(P;G) = H(C^*(P;G))$.

We will not need to consider cubical chains and homology with coefficients in a module.

\begin{rem}
An interesting elementary fact about $H_*(P)$ is that for every loop $\omega = x_{1\sharp} \cdots x_{k\sharp} \in P^{\mathbb I}$ such that $\char(R)\nmid k$, the homology class of the cycle $z_{\omega} = \sum_{i=1}^k x_i$ is nonzero. Indeed, if $\nu \in C^1(P;R)$ is the cochain where $\nu(x) = 1$ for all $x \in P_1$, then $\nu(z_{\omega}) = \sum_{i=1}^k \nu(x_i) = k\cdot1 \not= 0$. By definition of the cubical boundary operator, $\nu$ is a cocycle, and therefore $z_{\omega}$ is not a boundary.

\end{rem}

\subsection{Cubical singular chains} Let us briefly relate the cubical chain complex of a precubical set to the singular chain complex of its geometric realization. The \emph{normalized cubical singular chain complex} of a space $X$ is the quotient chain complex $S_*(X) = C_*(SX)/D_*(SX)$ where $D_*(SX)$ denotes the subcomplex of $C_*(SX)$ generated by the degenerate singular cubes \cite{HiltonWylie, Massey}. This chain complex is naturally chain homotopy equivalent to the the usual simplicial singular chain complex of $X$ \cite[Theorem 8.4.7]{HiltonWylie}. The following theorem can, for instance, be established by adapting the proof of \cite[Theorem 2.27]{Hatcher}, which is an analogous result for $\Delta$-complexes.

\begin{theor} \label{natquism}
Let $P$ be a precubical set. A natural chain homotopy equivalence ${\sing\colon C_*(P)} \to S_*(|P|)$ is given by \[\sing(x) = |x_{\sharp}| + D_*(S|P|), \quad x \in P.\] 
\end{theor}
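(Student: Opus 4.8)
The plan is to follow the skeletal induction of \cite[Theorem 2.27]{Hatcher}, with the cubical chain complex $C_*(P)$ playing the role of simplicial chains and the characteristic maps $x_{\sharp}$ playing the role of the simplices. I would first check that $\sing$ is a chain map, then show it is a quasi-isomorphism by induction on skeleta, and finally upgrade this to a chain homotopy equivalence. Since $\sing$ is defined on the free generators $x \in P_n$ by $\sing(x) = |x_{\sharp}| + D_*(S|P|)$ and extended linearly, it is automatically a module homomorphism; the content is that it commutes with the differentials. The essential point is the identity $|x_{\sharp}| \circ \delta_i^k = |(d_i^k x)_{\sharp}|$, valid under the identification $|\llbracket 0,1 \rrbracket^{\otimes n}| = [0,1]^n$. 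This follows from the defining relation $(d_i^k z, u) \sim (z, \delta_i^k(u))$ of the geometric realization together with $x_{\sharp}(d_i^k \iota_n) = d_i^k x$. As the boundary operator of the normalized cubical singular complex is $\sum_i (-1)^i(d_i^0 - d_i^1)$ with $d_i^k \sigma = \sigma \circ \delta_i^k$, this identity gives $d\,\sing(x) = \sum_i(-1)^i(\sing(d_i^0 x) - \sing(d_i^1 x)) = \sing(dx)$.

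Next I would reduce the problem to showing that $\sing$ is a quasi-isomorphism. Both $C_*(P)$ and $S_*(|P|)$ are nonnegative chain complexes of free, hence projective, $R$-modules, the normalized singular complex being free on the nondegenerate singular cubes. For such complexes a chain map that induces an isomorphism in homology is automatically a chain homotopy equivalence, because its mapping cone is then a bounded-below acyclic complex of projectives and is therefore contractible.

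To prove that $\sing$ is a quasi-isomorphism I would induct on skeleta. Writing $P_{\leq n}$ for the $n$-skeleton, the space $|P_{\leq n}|$ is the $n$-skeleton of the CW complex $|P|$, obtained from $|P_{\leq n-1}|$ by attaching one $n$-cell with characteristic map $|x_{\sharp}|$ for each $x \in P_n$. The map $\sing$ is compatible with the skeletal filtrations and thus induces a morphism between the long exact homology sequences of the pairs $\big(C_*(P_{\leq n}), C_*(P_{\leq n-1})\big)$ and $\big(S_*(|P_{\leq n}|), S_*(|P_{\leq n-1}|)\big)$. On the cubical side the relative complex $C_*(P_{\leq n})/C_*(P_{\leq n-1})$ is concentrated in degree $n$ with zero differential and is free on $P_n$; on the singular side the relative homology $H_*(|P_{\leq n}|, |P_{\leq n-1}|)$ is free on the $n$-cells and concentrated in degree $n$. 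The induced map sends the generator $x$ to the relative class of its characteristic map $|x_{\sharp}|$, precisely the generator attached to the cell, so $\sing$ is an isomorphism on these relative groups. Starting from the trivial case $n=0$, where $P_0$ and $|P_0|$ are the same discrete set of points, and applying the five lemma at each stage shows that $\sing$ is an isomorphism on $H_*(P_{\leq n})$ for all $n$. Passing to the colimit over $n$ — legitimate because cubical chains are finitely supported and every singular chain of $|P|$ has compact, hence finite-dimensional, support — yields the isomorphism $H_*(C_*(P)) \to H_*(S_*(|P|))$, and naturality in $P$ is clear from the construction.

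The step I expect to be the main obstacle is the relative computation on the singular side: identifying $H_*(|P_{\leq n}|, |P_{\leq n-1}|)$ with the free module on $P_n$ concentrated in degree $n$, and verifying that $\sing$ carries each generator $x$ to the corresponding cell class. This is where the geometric content resides. It requires the standard excision argument computing the local homology $H_*([0,1]^n, \partial[0,1]^n)$, together with the fact (available from \cite[Theorem 8.4.7]{HiltonWylie}) that passing to the normalized complex does not disturb relative singular homology.
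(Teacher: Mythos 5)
Your proposal is correct and follows precisely the route the paper itself indicates: the paper gives no written proof but points to adapting \cite[Theorem 2.27]{Hatcher}, and your argument (chain map via $|x_{\sharp}|\circ\delta_i^k=|(d_i^kx)_{\sharp}|$, skeletal induction with the five lemma on the relative groups, colimit over skeleta, and the upgrade from quasi-isomorphism to chain homotopy equivalence for bounded-below complexes of projectives) is exactly that adaptation. No gaps.
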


\subsection{Cellular chains}
\begin{sloppypar}
Recall that the \emph{cellular chain complex} of a CW complex $X$ with skeleta $X_ n$, $C^{CW}_*(X)$, is defined using (cubical or simplicial) singular homology as follows (see e.g. \cite[V.7]{Massey}). For $n \geq 0$, one sets
\[
C^{CW}_n(X) = H_n(X_ n,X_{n-1}),
\]
where, by convention, $X_{-1} = \emptyset$. The boundary operator $d\colon C^{CW}_n(X) \to C^{CW}_{n-1}(X)$ is the composite
\[
H_n(X_n,X_{n-1}) \xrightarrow{\partial_*} H_{n-1}(X_{n-1}) \xrightarrow{} H_{n-1}(X_{n-1},X_{n-2})
\]
where $\partial_*$ is the connecting homomorphism in the long exact homology sequence of the pair $(X_n,X_{n-1})$ and the second map is the one occurring in the long exact sequence of the pair $(X_{n-1},X_{n-2})$.

\begin{theor} \label{cubCW}
A natural chain isomorphism  
$\cell \colon C_*(P) \to C^{CW}_*(|P|)$ is given by \[\cell (x) = [|x_{\sharp}|], \quad x \in P_n\]
or, more precisely, by $\cell (x)  = [(|x_{\sharp}|+D_n(S|P_{\leq n}|))+S_n(|P_{<n}|)]$.
\end{theor}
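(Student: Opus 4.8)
The plan is to establish, in turn, that $\cell$ is a well-defined module homomorphism which is bijective in each degree, that it commutes with the boundary operators, and that it is natural in $P$; the middle point carries the substance. First I would pin down the CW structure already recorded above: $|P|$ is a CW complex whose $n$-skeleton is $|P_{\leq n}|$ and whose $n$-cells are indexed by $P_n$, the characteristic map of the cell of $x\in P_n$ being $|x_\sharp|\colon [0,1]^n\to |P_{\leq n}|$ (using the identification $|\llbracket 0,1\rrbracket^{\otimes n}|=[0,1]^n$). Since $|x_\sharp|$ carries $\partial[0,1]^n$ into $|P_{<n}|$, it represents a relative cubical singular cycle, and $\cell(x)$ is by definition the class of $|x_\sharp|+D_n(S|P_{\leq n}|)$ in $H_n(|P_{\leq n}|,|P_{<n}|)=C^{CW}_n(|P|)$, computed with normalized cubical singular chains. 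By the structure theorem for the cellular chain groups of a CW complex, $C^{CW}_n(|P|)$ is free with basis the classes of the cells; and because $[|x_\sharp|]$ is the image under $(|x_\sharp|)_*$ of the canonical generator of $H_n([0,1]^n,\partial[0,1]^n)\cong R$ (represented in cubical singular homology by the identity cube), this basis is exactly $\{\cell(x):x\in P_n\}$. Hence $\cell$ is a degreewise isomorphism. Working in cubical rather than simplicial singular homology here is what makes the generator, and with it the orientation, canonical, so no sign ambiguity enters.

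The substantive point is that $\cell$ is a chain map, and I would deduce this directly from the chain map $\sing$ of Theorem \ref{natquism} rather than recomputing attaching-map degrees. First observe that $\sing$ respects the skeletal filtration: for $x\in P_n$ the chain $\sing(x)=|x_\sharp|+D_n$ lies in $S_n(|P_{\leq n}|)$ because $|x_\sharp|$ has image in the $n$-skeleton, while $d\,\sing(x)=\sing(dx)$ lies in $S_{n-1}(|P_{<n}|)$ because $dx\in C_{n-1}(P)$ is a combination of the $(n-1)$-cubes $d_i^k x$. Consequently $\sing(x)$ is a relative cycle representing $\cell(x)$, and the connecting homomorphism of the pair $(|P_{\leq n}|,|P_{<n}|)$ is computed by taking the singular boundary of such a representative:
\[
\partial_*[\,\sing(x)\,]=[\,d\,\sing(x)\,]=[\,\sing(dx)\,]\in H_{n-1}(|P_{<n}|).
\]
Applying the map $j_*$ induced by $|P_{<n}|\hookrightarrow (|P_{<n}|,|P_{<n-1}|)$ and reading off the definition of $\cell$ gives $j_*[\sing(dx)]=\cell(dx)$, since $\cell(dx)$ is by linearity the class of $\sing(dx)$ in $H_{n-1}(|P_{<n}|,|P_{<n-1}|)$. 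As the cellular boundary is precisely $j_*\circ\partial_*$, this shows $d^{CW}\circ\cell=\cell\circ d$. The only concrete input hidden in $\sing$ being a chain map is the face-compatibility identity $|x_\sharp|\circ\delta_i^k=|(d_i^k x)_\sharp|$, which itself follows from the uniqueness clause defining $x_\sharp$.

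Finally, naturality is formal: for a morphism $f\colon P\to Q$ one has $|f|\circ|x_\sharp|=|(f(x))_\sharp|$ (both realize the unique morphism sending $\iota_n$ to $f(x)$), and $|f|$ is cellular, so $\cell_Q(f(x))=C^{CW}_*(|f|)(\cell_P(x))$ and the naturality square commutes. I expect the main obstacle to be not any single hard estimate but the bookkeeping of the first step: correctly matching the free basis of $C^{CW}_n(|P|)$ with $P_n$ through the relative-homology interpretation of $\cell(x)$, and keeping the orientation conventions in the connecting-homomorphism computation consistent with that matching. Both are kept honest by performing everything in normalized cubical singular homology, exactly as in the ``more precise'' version of the formula, which is presumably why the statement is phrased that way.
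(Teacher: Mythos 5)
Your argument is correct, and the chain-map step is essentially the paper's own: the paper verifies $d^{CW}\circ\cell=\cell\circ d$ by a commutative diagram whose content is exactly your computation $\partial_*[\sing(x)]=[\sing(dx)]$ followed by $j_*$, using the naturality of $\sing$ and the definition of the connecting homomorphism. Where you diverge is the degreewise isomorphism. The paper does not invoke the classical structure theorem for cellular chain groups; instead it observes that $C_*(P_{\leq n})/C_*(P_{<n})$ is concentrated in degree $n$, so that $C_n(P)\cong H_n(P_{\leq n},P_{<n})$ tautologically, and then applies Theorem \ref{natquism} to the pair (via the five lemma) to conclude that $\sing_*\colon H_n(P_{\leq n},P_{<n})\to H_n(|P_{\leq n}|,|P_{<n}|)$ is an isomorphism; $\cell$ is by definition the composite of these two maps. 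This buys a self-contained argument that never needs to identify a basis of $C^{CW}_n(|P|)$ or to know that the identity cube generates $H_n([0,1]^n,\partial[0,1]^n)$ in normalized cubical singular homology -- the one standard-but-external input your route relies on (it is true, and follows e.g. from the comparison with simplicial singular homology, but it is an extra citation). Your route, conversely, makes the free basis of $C^{CW}_n(|P|)$ and the orientation conventions explicit, which the paper's proof leaves implicit. Both are valid; the paper's is the more economical given that Theorem \ref{natquism} is already on the table.
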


\begin{proof}
For each $n \geq 0$, the quotient chain complex $C_*(P_{\leq n})/C_*(P_{< n})$ is concentrated in degree $n$, and we have a natural isomorphism
\[
C_n(P) \to  (C_*(P_{\leq n})/C_*(P_{< n}))_n \to H_n(P_{\leq n}, P_{< n}).
\]
The map $\cell \colon C_n(P) \to C^{CW}_n(|P|)$ is the composite of this isomorphism with the natural map 
\[\sing_* \colon  H_n(P_{\leq n}, P_{< n}) \to H_n(|P_{\leq n}|,|P_{<n}|),\]
which is an isomorphism by Theorem \ref{natquism}. Consider the following commutative diagram of modules:	
\[
\xymatrix{
C_n(P) \ar^{d}[rr] \ar_{\cong} [d] && C_{n-1}(P) \ar^{\cong} [d]\\
H_n(P_{\leq n},P_{<n}) \ar^(0.55){\partial_*}[r] \ar^{\sing_*}_{\cong} [d] & H_{n-1}(P_{<n}) \ar[r] \ar^{\cong}_{\sing_*} [d] & H_{n-1}(P_{<n},P_{<n-1}) \ar^{\cong}_{\sing_*} [d] \\
H_n(|P_{\leq n}|,|P_{<n}|) \ar_(0.55){\partial_*}[r]  & H_{n-1}(|P_{<n}|) \ar[r]  & H_{n-1}(|P_{< n}|,|P_{<n-1}|)
}
\]	
Since the bottom row is the boundary operator $d\colon C^{CW}_n(|P|) \to C^{CW}_{n-1}(|P|)$, the map $\cell \colon C_*(P) \to C^{CW}_*(|P|)$ is a natural isomorphism of chain complexes.	
\end{proof}
\end{sloppypar}

\section{Labeled homology of an HDA} \label{secLabels}

We use the labeling function of an HDA over a free monoid to define its labeling cochains.  The compatibility between the labeling function and the boundary operators---the fact that opposite edges of a $2$-cube have the same label---permits us to show that these cochains are cocycles and induce a labeling in cubical homology.

\subsection{Labels of parallel edges} 

The following lemma establishes that parallel edges in subdivided cubes of HDAs (i.e., images of tensor products of precubical intervals under morphisms of precubical sets) have the same label:

\begin{lem} \label{paralleledges} Let $\A$ be an $M\mbox{-}$HDA, let $\chi \colon {\llbracket 0, l_1\rrbracket \otimes \cdots \otimes \llbracket 0, l_n\rrbracket} \to \A$ be a morphism of precubical sets, and let $1\leq s \leq n$ and $0 \leq j_s < l_s$    be integers. Then for all tuples $(j_1,\dots, j_{s-1},j_{s+1},\dots ,j_n)$ with $j_i \in \{0, \dots, l_i\}$,
	\[
	\lambda_{\A} (\chi (j_1, \dots, j_{s-1},[j_s,j_s+1],j_{s+1}, \dots, j_n)) = \lambda_{\A} (\chi (0, \dots, 0,[j_s,j_s+1],0, \dots, 0)).
	\] 
\end{lem}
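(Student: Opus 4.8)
The plan is to prove the statement by reducing the general coordinate tuple to the zero tuple one coordinate at a time, each elementary step being governed by the defining property of an HDA that opposite edges of a $2$-cube carry the same label. Write $Q = \llbracket 0, l_1\rrbracket \otimes \cdots \otimes \llbracket 0, l_n\rrbracket$ for the domain of $\chi$, and for a tuple $\mathbf{j} = (j_1, \dots, j_{s-1}, j_{s+1}, \dots, j_n)$ with $j_i \in \{0, \dots, l_i\}$ denote by
\[
e(\mathbf{j}) = \chi(j_1, \dots, j_{s-1}, [j_s, j_s+1], j_{s+1}, \dots, j_n) \in \A_1
\]
the image under $\chi$ of the edge in the $s$-direction sitting over $\mathbf{j}$ (recall that $j_s$ is fixed throughout). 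The assertion to be proved is that $\lambda_{\A}(e(\mathbf{j}))$ is independent of $\mathbf{j}$, and for this it suffices to show that it always equals $\lambda_{\A}(e(\mathbf{0}))$, where $\mathbf{0}$ is the zero tuple.

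The core is the following single-step claim: if $\mathbf{j}$ and $\mathbf{j}'$ agree in all coordinates except a single one, $i \neq s$, where $j_i' = j_i - 1$ (so $j_i \geq 1$), then $\lambda_{\A}(e(\mathbf{j})) = \lambda_{\A}(e(\mathbf{j}'))$. To see this, I would consider the $2$-cube $y \in Q_2$ whose $s$-th component is the edge $[j_s, j_s+1]$, whose $i$-th component is the edge $[j_i - 1, j_i]$ (a valid edge of $\llbracket 0, l_i\rrbracket$ since $1 \leq j_i \leq l_i$), and whose remaining components are the vertices $j_m$. Its image $\chi(y)$ is a $2$-cube of $\A$, because morphisms of precubical sets preserve degree. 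Since $\chi$ commutes with the face operators, the two faces of $\chi(y)$ that retain the $s$-edge and specialize the $i$-component to $j_i$ resp.\ $j_i - 1$ are precisely $e(\mathbf{j})$ and $e(\mathbf{j}')$; these form a pair of opposite edges of the square $\chi(y)$, namely $d_2^0\chi(y), d_2^1\chi(y)$ if $i > s$ and $d_1^0\chi(y), d_1^1\chi(y)$ if $i < s$. In either case the defining condition $\lambda_{\A}(d_c^0 z) = \lambda_{\A}(d_c^1 z)$ for $z \in \A_2$ yields $\lambda_{\A}(e(\mathbf{j})) = \lambda_{\A}(e(\mathbf{j}'))$.

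With the single-step claim in hand, I would finish by connecting an arbitrary tuple $\mathbf{j}$ to $\mathbf{0}$ through a finite chain of tuples, each obtained from the previous one by decreasing one non-$s$ coordinate by $1$: reduce $j_1$ to $0$ in $j_1$ steps, then $j_2$ to $0$, and so on, skipping the index $s$. Every step preserves the label by the claim, so transitivity gives $\lambda_{\A}(e(\mathbf{j})) = \lambda_{\A}(e(\mathbf{0}))$, which is the desired equality. Note that this argument is valid over an arbitrary monoid $M$; freeness of the monoid plays no role here.

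I expect the only real care to lie in the bookkeeping of the tensor-product face operators---correctly identifying which overall face index $d^k_c$ ($c \in \{1,2\}$) corresponds to the $i$-direction of the auxiliary square $y$, which depends on whether $i < s$ or $i > s$---together with checking that the intermediate $2$-cubes genuinely exist in $Q$. Both are routine once the indexing conventions of the tensor product are unwound, and neither poses a conceptual difficulty; the substance of the lemma is entirely contained in the single application of the HDA condition in the single-step claim.
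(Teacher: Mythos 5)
Your proposal is correct and follows essentially the same route as the paper: the paper's proof is a lexicographic induction whose inductive step constructs exactly your auxiliary $2$-cube (with components $[j_s,j_s+1]$ in direction $s$, $[j_t-1,j_t]$ in direction $t$, and vertices elsewhere) and applies the HDA condition to its faces $d^0_q$, $d^1_q$ with $q=1$ or $2$ according to whether $t<s$ or $t>s$, matching your case split. Replacing the lexicographic induction by an explicit chain of single-coordinate decrements is only a cosmetic difference.
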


\begin{proof}
	Order the tuples $(j_1,\dots, j_{s-1},j_{s+1},\dots ,j_n)$ lexicographically, and suppose inductively that the statement holds for all predecessors of a given such tuple that has at least one nonzero component. Let $t\not= s$ be an index such that $j_t\not=0$, and consider the tuple $(j'_1,\dots, j'_{s-1},j'_{s+1},\dots ,j'_n)$ where 
	\[ j'_i = \left \{ \begin{array}{ll} j_i, & i \not=t,\\
	j_t-1, &  i=t. \end{array} \right. \]
	This is a predecessor of the given tuple $(j_1,\dots, j_{s-1},j_{s+1},\dots ,j_n)$. Consider the $2$-cube $b \in \llbracket 0, l_1\rrbracket \otimes \cdots \otimes \llbracket 0, l_n\rrbracket$ defined by 
	\[ b_i = \left \{ \begin{array}{ll} \left[j_s,j_s+1\right], & i = s,\\
	\left[j_t-1,j_t\right], & i = t,\\
	j_i, & i \not= s,t. \end{array} \right. \]
	Set 
	\[ q = \left \{ \begin{array}{ll} 1, & s > t,\\
	2, &  s< t. \end{array} \right. \]
	Then 
	\begin{align*}
		\MoveEqLeft{
			\lambda_{\A} (\chi (j_1, \dots, j_{s-1},[j_s,j_s+1],j_{s+1}, \dots, j_n))}\\
		&= \lambda_{\A} (\chi (d^1_qb))\\
		&= \lambda_{\A} (d^1_q\chi (b))\\
		&= \lambda_{\A} (d^0_q\chi (b))\\
		&= \lambda_{\A} (\chi (d^0_qb))\\
		&= \lambda_{\A} (\chi (j'_1, \dots, j'_{s-1},[j_s,j_s+1],j'_{s+1}, \dots, j'_n))\\
		&= \lambda_{\A} (\chi (0, \dots, 0,[j_s,j_s+1],0, \dots, 0)). \qedhere
	\end{align*}	
\end{proof}

\subsection{The edge  \texorpdfstring{$e^k_ix$}{}} \label{eki}

Let $x$ be an element of degree $n > 0$ of a precubical set $P$, and let $k\in\{0,1\}$ and $i \in \{1, \dots, n\}$. We define the element $e^k_ix \in P_1$ by 
\[e^ k_ix = \left\{\begin{array}{ll} x,& n = 1,\\d_1^{1-k} \cdots d_{i-1}^ {1-k}d_{i+1}^{1-k}\cdots d_n^{1-k}x, & n > 1. \end{array} \right.\] The notation, introduced in \cite{topabs}, reflects the fact that the edge $e^k_ix$ is associated with the face $d^k_ix$: The element $e^0_ix$ is an edge of $x$ leading from the final vertex of the face $d^ 0_ix$ to the final vertex of $x$, i.e., we have $d_1^0e^ 0_ix = d_1^1\cdots d_1^1d^0_ix$ and $d_1^1e^0_ix = d_1^ 1\cdots d_1^1x$. Similarly, $e^1_ix$ is an edge of $x$ leading from the initial vertex of $x$ to the initial vertex of the face $d^ 1_ix$, i.e., we have $d_1^ 0e^ 1_ix = {d_1^ 0\cdots d_1^0x}$ and $d_1^1e^ 1_ix = d_1^ 0\cdots d_1^0d_i^ 1x$.

\subsection{Free monoid} Let $\Sigma$ be an alphabet (i.e., a set). The free monoid on $\Sigma$ is denoted by $\Sigma^*$. The length of a string $m \in \Sigma^*$, i.e., the unique integer $n$ such that $m \in \Sigma^n$, will be denoted by $|m|$. Given a string $m$ of length $n\geq 1$, we will write $m_1, \dots, m_n$ to denote the uniquely determined elements of $\Sigma$ such that $m = m_1\cdots m_n$.

\subsection{Labeling cochain}

Let $\Sigma$ be an alphabet, and let $\A$ be a $\Sigma^*\mbox{-}$HDA. Consider the exterior algebra on the free module generated by $\Sigma$, $\Lambda (\Sigma)$, and let $n \geq 0$ be an integer. The \emph{$n$th labeling cochain} \[\mathfrak{l}_{\A}^n \colon C_*(\A) \to \Lambda (\Sigma)\] is the homomorphism of degree $-n$ defined on basis elements $x \in \A_n$ by 
\[\mathfrak{l}_{\A}^n(x) = \left \{\begin{array}{ll}
1, & n = 0, \vspace{0.2cm}\\
\sum \limits _{j_1 = 1}^{|\lambda_{\A}(e^0_1x)|} \dots \sum \limits _{j_n = 1}^{|\lambda_{\A}(e^0_nx)|} \lambda_{\A}(e^0_1x)_{j_1}\wedge \dots \wedge  \lambda_{\A}(e^0_nx)_{j_n},&  n > 0.
\end{array} \right.\]
Note that for an edge $x \in \A_1$ with $\lambda_{\A}(x) = m$, $\mathfrak{l}_{\A}^1(x) = \sum \limits _{j = 1}^{|m|} m_j$. Thus $\mathfrak{l}_{\A}^1(x)$ is the image of $\lambda_{\A}(x)$ under the canonical monoid homomorphism from $\Sigma^*$ to the free module on $\Sigma$. 

We show that the labeling cochain $\mathfrak{l}_{\A}^n$ is a cocycle. We need three lemmas.

\begin{lem} \label{hatlambdae}
For any element $x \in \A$ of degree $n\geq 1$, $\mathfrak{l}_{\A}^{1}(e^0_ix) = \mathfrak{l}_{\A}^{1}(e^1_ix)$.		
\end{lem}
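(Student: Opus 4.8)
The plan is to reduce the statement to Lemma \ref{paralleledges} applied to the characteristic morphism $x_{\sharp}\colon \llbracket 0,1\rrbracket^{\otimes n} \to \A$. Since the first labeling cochain depends only on the label of an edge (recall $\mathfrak{l}_{\A}^1(y) = \sum_{j=1}^{|m|} m_j$ for $m = \lambda_{\A}(y)$), it suffices to prove the stronger assertion that $\lambda_{\A}(e^0_i x) = \lambda_{\A}(e^1_i x)$ as strings in $\Sigma^*$. The case $n = 1$ is trivial: here $i = 1$ and $e^0_1 x = x = e^1_1 x$ by definition, so one may assume $n \geq 2$.

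Next I would locate the two edges inside the precubical $n$-cube. Writing $\chi = x_{\sharp}$ and using that $\chi$ is a morphism of precubical sets with $\chi(\iota_n) = x$, so that $\chi$ commutes with the face operators and hence $e^k_i x = \chi(e^k_i \iota_n)$, I would compute $e^0_i \iota_n$ and $e^1_i \iota_n$ directly from the definition of $e^k_i$ given in Subsection \ref{eki} together with the tensor-product face formula. The upshot, which matches the geometric description of $e^k_i x$ in Subsection \ref{eki}, is that in the tuple notation of the tensor product one has $e^0_i \iota_n = (1,\dots,1,[0,1],1,\dots,1)$ and $e^1_i \iota_n = (0,\dots,0,[0,1],0,\dots,0)$, with the interval $[0,1]$ in position $i$ and every other coordinate equal to $1$, respectively $0$. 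Applying Lemma \ref{paralleledges} with this $\chi$, all $l_i = 1$, with $s = i$, $j_s = 0$, and the tuple of remaining coordinates equal to $(1,\dots,1)$, yields $\lambda_{\A}(\chi(1,\dots,1,[0,1],1,\dots,1)) = \lambda_{\A}(\chi(0,\dots,0,[0,1],0,\dots,0))$, that is, $\lambda_{\A}(e^0_i x) = \lambda_{\A}(e^1_i x)$, as desired.

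The only slightly delicate point, and the main obstacle, is the bookkeeping in the middle step: verifying from the recursive tensor-product face formula that the composites $d_1^1\cdots d_{i-1}^1 d_{i+1}^1\cdots d_n^1 \iota_n$ and $d_1^0\cdots d_{i-1}^0 d_{i+1}^0\cdots d_n^0 \iota_n$ really are the claimed tuples, i.e.\ that the index shifts forced by the precubical relations $d^k_i d^l_j = d^l_{j-1} d^k_i$ do not disturb the identification of the two edges as the ``top'' and ``bottom'' edges in direction $i$. This is a routine verification; once it is in place, the conclusion is an immediate instance of Lemma \ref{paralleledges}, and the passage from equal labels to equal values of $\mathfrak{l}_{\A}^1$ is formal.
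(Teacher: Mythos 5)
Your proof is correct and follows essentially the same route as the paper: reduce to the identity of labels via the formula $\mathfrak{l}_{\A}^1(y)=\sum_j \lambda_{\A}(y)_j$, identify $e^0_ix = x_{\sharp}(1,\dots,1,[0,1],1,\dots,1)$ and $e^1_ix = x_{\sharp}(0,\dots,0,[0,1],0,\dots,0)$, and apply Lemma \ref{paralleledges} to $\chi = x_{\sharp}$ with all $l_i=1$. The bookkeeping step you flag does check out, so nothing is missing.
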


\begin{proof}
If $n= 1$, then $e^0_ix = e^1_ix = x$. Suppose that $n\geq 2$. Then 
$e^0_ix= x_{\sharp}(1, \dots, 1, \underset{i}{[0,1]}, 1,\dots, 1)$ and $e^1_ix = x_{\sharp}(0, \dots, 0, \underset{i}{[0,1]}, 0,\dots, 0)$. Hence, by Lemma   \ref{paralleledges}, $\lambda_{\A}(e^0_ix) =  \lambda_{\A}(e^1_ix)$. Thus $\mathfrak{l}_{\A}^{1}(e^0_ix) = \mathfrak{l}_{\A}^{1}(e^1_ix)$.
\end{proof}

\begin{lem} \label{hatlambdaedges}
For any element $x \in \A$ of degree $n\geq 1$, 
\[\mathfrak{l}_{\A}^n(x) = \mathfrak{l}_{\A}^1(e^0_1x)\wedge \dots \wedge\mathfrak{l}_{\A}^1(e^0_nx) = \mathfrak{l}_{\A}^1(e^1_1x) \wedge \dots \wedge \mathfrak{l}_{\A}^1(e^1_nx).\]
\end{lem}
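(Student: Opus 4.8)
The plan is to deduce both equalities directly from the definition of the labeling cochain together with Lemma~\ref{hatlambdae}, using only the multilinearity of the wedge product. The first equality is essentially a distributivity statement, while the second follows by replacing each edge factor using Lemma~\ref{hatlambdae}.

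First I would recall that for each $i$ the element $e^0_ix$ is an edge, so that $\mathfrak{l}_{\A}^1(e^0_ix) = \sum_{j_i=1}^{|\lambda_{\A}(e^0_ix)|} \lambda_{\A}(e^0_ix)_{j_i}$ is a linear combination of generators lying in the degree-one part $\Lambda^1(\Sigma)$ of the exterior algebra. Since the wedge product $\Lambda^1(\Sigma) \times \cdots \times \Lambda^1(\Sigma) \to \Lambda^n(\Sigma)$ is multilinear, I would expand
\[
\mathfrak{l}_{\A}^1(e^0_1x) \wedge \dots \wedge \mathfrak{l}_{\A}^1(e^0_nx)
= \sum_{j_1=1}^{|\lambda_{\A}(e^0_1x)|} \dots \sum_{j_n=1}^{|\lambda_{\A}(e^0_nx)|} \lambda_{\A}(e^0_1x)_{j_1} \wedge \dots \wedge \lambda_{\A}(e^0_nx)_{j_n},
\]
and observe that the right-hand side is exactly $\mathfrak{l}_{\A}^n(x)$ by definition. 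This establishes the first equality; the case $n=1$ is immediate, since then the product has a single factor and both sides reduce to $\mathfrak{l}_{\A}^1(x)$.

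For the second equality, Lemma~\ref{hatlambdae} gives $\mathfrak{l}_{\A}^1(e^0_ix) = \mathfrak{l}_{\A}^1(e^1_ix)$ for every $i \in \{1,\dots,n\}$. Substituting these identities factor by factor into the wedge product yields
\[
\mathfrak{l}_{\A}^1(e^0_1x) \wedge \dots \wedge \mathfrak{l}_{\A}^1(e^0_nx)
= \mathfrak{l}_{\A}^1(e^1_1x) \wedge \dots \wedge \mathfrak{l}_{\A}^1(e^1_nx),
\]
which is the desired claim. There is no real obstacle here: the only points requiring care are bookkeeping ones, namely confirming that each factor genuinely lives in degree one so that multilinearity applies cleanly, and keeping the nested summation indices consistent when expanding the product. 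The substance of the statement rests entirely on the parallel-edge identity of Lemma~\ref{hatlambdae}, which in turn encodes the defining compatibility of the labeling function with the face operators.
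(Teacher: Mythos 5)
Your proof is correct and follows essentially the same route as the paper's: expand each $\mathfrak{l}_{\A}^1(e^0_ix)$ as a sum of generators, use multilinearity of the wedge product to recognize the nested sum defining $\mathfrak{l}_{\A}^n(x)$, and then invoke Lemma~\ref{hatlambdae} to pass from the $e^0_i$ to the $e^1_i$ factors. No gaps.
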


\begin{proof}
We have 
\[
\mathfrak{l}_{\A}^1(e^0_ix) = \sum \limits_{j_i=1}^{|\lambda_{\A}(e^0_ix)|} \lambda_{\A}(e^0_ix)_{j_i}
\]
and 
\begin{align*}
\mathfrak{l}_{\A}^n(x) 
&= \sum \limits _{j_1 = 1}^{|\lambda_{\A}(e^0_1x)|} \cdots \sum \limits _{j_n = 1}^{|\lambda_{\A}(e^0_nx)|}\lambda_{\A}(e^0_1x)_{j_1}\wedge \dots \wedge \lambda_{\A}(e^0_nx)_{j_n}\\
&= \left(\sum \limits_{j_1=1}^{|\lambda_{\A}(e^0_1x)|}  \lambda_{\A}(e^0_1x)_{j_1}\right) \wedge \dots \wedge \left(\sum \limits_{j_n=1}^{|\lambda_{\A}(e^0_nx)|} \lambda_{\A}(e^0_nx)_{j_n}\right)\\
&= \mathfrak{l}_{\A}^1(e^0_1x) \wedge \dots \wedge \mathfrak{l}_{\A}^1(e^0_nx).
\end{align*}
By Lemma \ref{hatlambdae}, also $\mathfrak{l}_{\A}^n(x) =  \mathfrak{l}_{\A}^1(e^1_1x) \wedge \dots \wedge \mathfrak{l}_{\A}^1(e^1_nx)$.		
\end{proof}

\begin{lem} \label{hatlambdabdy}
For all $n\geq 0$, $x \in \A_{n+1}$, and $i\in \{1, \dots, n+1\}$, \[\mathfrak{l}_{\A}^{n}(d^0_ix) = \mathfrak{l}_{\A}^{n}(d^1_ix).\]	
\end{lem}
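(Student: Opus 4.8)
The plan is to reduce the identity to the one-dimensional labeling cochain by means of Lemma \ref{hatlambdaedges}, and then to identify the edges occurring in the faces $d^0_i x$ and $d^1_i x$ with edges of $x$ itself. If $n = 0$ there is nothing to prove, because $d^0_i x$ and $d^1_i x$ are then vertices and $\mathfrak{l}_{\A}^0$ takes the constant value $1$ on vertices. So assume $n \geq 1$.

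First I would expand both sides via Lemma \ref{hatlambdaedges}, deliberately using the back-face formula for $d^1_i x$ and the front-face formula for $d^0_i x$:
\begin{align*}
\mathfrak{l}_{\A}^n(d^1_i x) &= \mathfrak{l}_{\A}^1(e^0_1(d^1_i x)) \wedge \dots \wedge \mathfrak{l}_{\A}^1(e^0_n(d^1_i x)),\\
\mathfrak{l}_{\A}^n(d^0_i x) &= \mathfrak{l}_{\A}^1(e^1_1(d^0_i x)) \wedge \dots \wedge \mathfrak{l}_{\A}^1(e^1_n(d^0_i x)).
\end{align*}
Both products have $n$ factors, so it suffices to show that the $j$th factors agree for each $j \in \{1, \dots, n\}$, i.e. that $\mathfrak{l}_{\A}^1(e^0_j(d^1_i x)) = \mathfrak{l}_{\A}^1(e^1_j(d^0_i x))$.

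The heart of the argument is an explicit identification of these edges with edges of $x$. Let $\theta^k_i \colon \llbracket 0,1\rrbracket^{\otimes n} \to \llbracket 0,1\rrbracket^{\otimes (n+1)}$ denote the morphism of precubical sets that inserts the value $k$ in coordinate $i$, i.e. that sends $\iota_n$ to $d^k_i \iota_{n+1}$; by the uniqueness of the classifying morphism $x_{\sharp}$ one has $(d^k_i x)_{\sharp} = x_{\sharp} \circ \theta^k_i$. Evaluating $e^0_j$ and $e^1_j$ on the standard edges of $\llbracket 0,1 \rrbracket^{\otimes n}$ (exactly as in the proof of Lemma \ref{hatlambdae}) and tracking how the insertion of $k$ in coordinate $i$ displaces the free coordinate, I would obtain
\[
e^0_j(d^1_i x) = e^0_{j'} x, \qquad e^1_j(d^0_i x) = e^1_{j'} x, \qquad\text{where } j' = \begin{cases} j, & j < i,\\ j+1, & j \geq i. \end{cases}
\]
(For $n = 1$ these reduce to $e^0_1(d^1_i x) = d^1_i x$ and $e^1_1(d^0_i x) = d^0_i x$, i.e. to the defining HDA axiom.) The key point is that $j \mapsto j'$ is one and the same bijection $\{1, \dots, n\} \to \{1, \dots, n+1\} \setminus \{i\}$ in both formulas, so the free coordinates of $e^0_j(d^1_i x)$ and of $e^1_j(d^0_i x)$ lie in the same direction $j'$.

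Finally, Lemma \ref{hatlambdae} gives $\mathfrak{l}_{\A}^1(e^0_{j'} x) = \mathfrak{l}_{\A}^1(e^1_{j'} x)$, hence $\mathfrak{l}_{\A}^1(e^0_j(d^1_i x)) = \mathfrak{l}_{\A}^1(e^1_j(d^0_i x))$ for every $j$, and the two wedge products above coincide. (Alternatively, expanding both sides with the back-face formula yields the factorwise equality $\mathfrak{l}_{\A}^1(e^0_j(d^0_i x)) = \mathfrak{l}_{\A}^1(e^0_j(d^1_i x))$ straight from Lemma \ref{paralleledges}, since $e^0_j(d^0_i x)$ and $e^0_j(d^1_i x)$ are parallel edges of $x_{\sharp}$ differing only in coordinate $i$.) I expect the only real difficulty to be the combinatorial bookkeeping: verifying the coface relation $(d^k_i x)_{\sharp} = x_{\sharp} \circ \theta^k_i$ and, above all, the index shift $j \mapsto j'$, together with the front/back-face choices that make the free coordinates match so that Lemma \ref{hatlambdae} can be applied factor by factor.
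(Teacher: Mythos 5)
Your proof is correct and follows essentially the same route as the paper's: expand both sides via Lemma \ref{hatlambdaedges} using opposite vertex conventions, identify $e^{1}_j d^0_i x$ and $e^{0}_j d^1_i x$ with $e^{1}_{j'}x$ and $e^{0}_{j'}x$ for the same index shift $j\mapsto j'$, and conclude factorwise from Lemma \ref{hatlambdae}. The paper treats $n=1$ as a separate case using the HDA axiom directly, but your uniform handling of it is also valid.
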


\begin{proof}
If $n=0$, then $\mathfrak{l}_{\A}^{n}(d^0_ix) = 1 = \mathfrak{l}_{\A}^{n}(d^1_ix)$. If $n=1$, then $\lambda_{\A}(d^0_ix) = \lambda_{\A}(d^1_ix)$ and therefore $\mathfrak{l}_{\A}^{1}(d^0_ix) = \mathfrak{l}_{\A}^{1}(d^1_ix)$. Suppose that $n \geq 2$. We have 
\[ e^{1-k}_jd^k_ix = d_1^{k} \cdots d_{j-1}^ {k}d_{j+1}^{k}\cdots d_n^{k}d^k_ix = \left \{ \begin{array}{ll} e^{1-k}_jx, & i > j,\vspace{0.2cm} \\
e^{1-k}_{j+1}x, &  i\leq  j. \end{array} \right. \]
Hence, by Lemma \ref{hatlambdaedges} and Lemma \ref{hatlambdae}, \begin{align*}
\mathfrak{l}_{\A}^{n}(d^0_ix) &= \mathfrak{l}_{\A}^1(e^{1}_1d^0_ix) \wedge \dots \wedge \mathfrak{l}_{\A}^1(e^{1}_nd^0_ix)\\ &= \mathfrak{l}_{\A}^1(e^{1}_1x) \wedge \dots \wedge \mathfrak{l}_{\A}^1(e^{1}_{i-1}x) \wedge  \mathfrak{l}_{\A}^1(e^{1}_{i+1}x)\wedge \dots \wedge \mathfrak{l}_{\A}^1(e^{1}_{n+1}x)\\
&= \mathfrak{l}_{\A}^1(e^{0}_1x) \wedge \dots \wedge \mathfrak{l}_{\A}^1(e^{0}_{i-1}x) \wedge \mathfrak{l}_{\A}^1(e^{0}_{i+1}x) \wedge \dots \wedge \mathfrak{l}_{\A}^1(e^{0}_{n+1}x)\\
&= \mathfrak{l}_{\A}^1(e^{0}_1d^1_ix) \wedge \dots \wedge \mathfrak{l}_{\A}^1(e^{0}_nd^1_ix)\\
&= \mathfrak{l}_{\A}^{n}(d^1_ix).\qedhere
\end{align*}
\end{proof}

\begin{prop} \label{cocycle}
For each $n\geq 0$, $\mathfrak{l}_{\A}^n$ is a cocycle.	
\end{prop}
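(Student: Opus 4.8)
The plan is to verify the cocycle condition directly from the definition of the cubical cochain complex, reducing the whole statement to Lemma \ref{hatlambdabdy}. First I would note that $\mathfrak{l}_{\A}^n$ is a cochain of degree $-n$ whose target $\Lambda(\Sigma)$ plays the role of a coefficient module concentrated in degree $0$; hence $\mathfrak{l}_{\A}^n$ is supported in degree $n$, sending $C_n(\A)$ into $\Lambda(\Sigma)$ and vanishing on $C_m(\A)$ for $m \neq n$. Its coboundary $\delta \mathfrak{l}_{\A}^n = \pm\,\mathfrak{l}_{\A}^n \circ d$ is therefore supported in degree $n+1$, and the cocycle condition $\delta\mathfrak{l}_{\A}^n = 0$ collapses to the single requirement that $\mathfrak{l}_{\A}^n(dx) = 0$ for every $x \in \A_{n+1}$.

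I would then expand this requirement using the explicit cubical boundary operator. For $x \in \A_{n+1}$ we have $dx = \sum_{i=1}^{n+1}(-1)^i(d^0_i x - d^1_i x)$, so by linearity
\[
\mathfrak{l}_{\A}^n(dx) = \sum_{i=1}^{n+1}(-1)^i\left(\mathfrak{l}_{\A}^n(d^0_i x) - \mathfrak{l}_{\A}^n(d^1_i x)\right).
\]
Lemma \ref{hatlambdabdy} states exactly that $\mathfrak{l}_{\A}^n(d^0_i x) = \mathfrak{l}_{\A}^n(d^1_i x)$ for every $i$, so each summand cancels and the total vanishes. This argument is uniform in $n \geq 0$, since the special low-degree behaviour (at $n = 0$ and $n = 1$) has already been folded into Lemma \ref{hatlambdabdy}.

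The proposition thus carries almost no independent difficulty: the substantive work has been front-loaded into the earlier lemmas, which trace the equality of the labeling cochains on opposite faces back through Lemmas \ref{hatlambdaedges} and \ref{hatlambdae} to Lemma \ref{paralleledges}, i.e.\ to the defining HDA condition that opposite edges of a $2$-cube carry the same label. The only place where a little care is needed is the initial degree bookkeeping --- confirming that the coboundary of a degree-$n$ cochain with values in a degree-$0$ module is determined entirely by precomposition with $d\colon C_{n+1}(\A) \to C_n(\A)$, so that no other chain degrees can contribute. Once that reduction is in place, the result follows by termwise cancellation.
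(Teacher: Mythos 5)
Your proof is correct and is essentially the paper's own argument: expand $\mathfrak{l}_{\A}^n(dx)$ via the cubical boundary formula and cancel each term using Lemma \ref{hatlambdabdy}. The extra degree bookkeeping you include is sound but implicit in the paper's one-line proof.
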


\begin{proof}
Let $x \in \A_{n+1}$. By Lemma \ref{hatlambdabdy}, \[\mathfrak{l}_{\A}^n (dx) =  \sum \limits_{i=1}^{n+1}(-1)^i(\mathfrak{l}_{\A}^{n}(d^0_ix) -\mathfrak{l}_{\A}^{n}(d^1_ix)) = 0.\qedhere\]
\end{proof}

\subsection{Labeled homology} 
Let $\Sigma$ be an alphabet, and let $\A$ be a $\Sigma^*\mbox{-}$HDA. Since the labeling cochain $\mathfrak{l}_{\A}^n$ is a cocycle, we may define the \emph{$n$th labeling homomorphism} \[ \ell_{\A}^n\colon H_n(\A) \to\Lambda (\Sigma)\]
by \[\ell_{\A}^n([z])  = {\mathfrak{l}}_{\A}^n(z).\]
We will be able to drop the superscript $n$ and write $\ell_{\A}$ instead of $\ell_{\A}^n$. The \emph{labeled homology} of $\A$ is the homology of $\A$ together with the sequence of labeling homomorphisms.

\begin{exs} \label{firstexs}
	\begin{sloppypar}
(i) Consider the alphabet $\Sigma = \{a,b\}$ and the 1-dimensional $\Sigma^*\mbox{-}$HDA $\A$ made up of the boundary of a square in the way depicted in Figure \ref{figbd}.
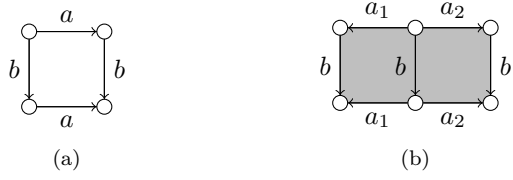
\begin{figure}[t]
	\center
	\subfloat[]
	{ 
	\begin{tikzpicture}[initial text={},on grid]

	\path[draw] (0,0)--(1,0)--(1,1)--(0,1)--cycle;

	\node[state,minimum size=0pt,inner sep =2pt,fill=white] (p_0)   {}; 
	
	\node[state,minimum size=0pt,inner sep =2pt,fill=white] (p_2) [right=of p_0,xshift=0cm] {};
	
	\node[state,minimum size=0pt,inner sep =2pt,fill=white,initial where=above,initial distance=0.2cm] [above=of p_0, yshift=0cm] (p_3)   {};

	\node[state,minimum size=0pt,inner sep =2pt,fill=white] (p_5) [right=of p_3,xshift=0cm] {}; 
	
	\path[->] 
	(p_0) edge[below] node {$a$} (p_2)
	(p_3) edge[above]  node {$a$} (p_5)
	(p_3) edge[left]  node {$b$} (p_0)
	(p_5) edge[right]  node {$b$} (p_2);

	\end{tikzpicture}
	 \label{figbd}
}
\hspace{2cm}
\subfloat[]
{
	\begin{tikzpicture}[initial text={},on grid]

	\path[draw, fill=lightgray] (0,0)--(1,0)--(1,1)--(0,1)--cycle;     
	
	\path[draw, fill=lightgray]
	(0,0)--(-1,0)--(-1,1)--(0,1)--cycle;

	\node[state,minimum size=0pt,inner sep =2pt,fill=white] (q_0)   {}; 
	
	\node[state,minimum size=0pt,inner sep =2pt,fill=white] (q_2) [right=of q_0,xshift=0cm] {};
	
	\node[state,minimum size=0pt,inner sep =2pt,fill=white] (q_4) [left=of q_0,xshift=0cm] {};
	
	\node[state,minimum size=0pt,inner sep =2pt,fill=white] [above=of q_0, yshift=0cm] (q_3)   {};
	
	\node[state,minimum size=0pt,inner sep =2pt,fill=white] (q_6) [left=of q_3,xshift=0cm] {};

	\node[state,minimum size=0pt,inner sep =2pt,fill=white] (q_5) [right=of q_3,xshift=0cm] {}; 
	
	\path[->] 
	(q_0) edge[below] node {$a_2$} (q_2)
	(q_0) edge[below] node {$a_1$} (q_4)
	(q_3) edge[above]  node {$a_2$} (q_5)
	(q_3) edge[above]  node {$a_1$} (q_6)
	(q_6) edge[left]  node {$b$} (q_4)
	(q_3) edge[left]  node {$b$} (q_0)
	(q_5) edge[right]  node {$b$} (q_2);

	\end{tikzpicture}
	 \label{figKlein}
}
\caption{Two very simple HDAs}
\end{figure}  
For each edge $x$, we have $\mathfrak{l}_{\A}^1(x) = \lambda_{\A}(x)$. Therefore $\ell_{\A}(\alpha) = 0$ for all $\alpha \in H_1(\A)$. This reflects the fact that every $1$-cycle would be a boundary if the square was not missing in $\A$. A hole in an HDA that is due to a set of missing cubes is related to what is called a \emph{forbidden region} in \cite{FGHMR}. The label of the homology class corresponding to such a hole is always zero. 
\end{sloppypar}
(ii) Since $\Lambda (\Sigma)$ is a free module, the label of any torsion element in the homology of a $\Sigma^ *\mbox{-}$HDA is $0$.

(iii) A 1-dimensional $\Sigma^*\mbox{-}$HDA $\A$ is called a \emph{directed circle} if it has $n$ distinct vertices $v_0, \dots, v_{n-1}$ $(n\geq 1)$ and $n$ edges $x_0, \dots, x_{n-1}$ such that for all $i$ and $k$, $d_1^kx_i = v_{i+k}$ (where $v_n = v_0$). 
The chain $z=\sum_{i=0}^{n-1}x_i$ is a cycle, and if $\lambda_{\A}(x_i) = a_{i1}\cdots a_{ir_i}$ with $a_{i1},\dots ,a_{ir_i}\in \Sigma$, then \[\ell_{\A}([z]) = \sum \limits_{i=0}^{n-1} \sum\limits_{j=1}^{r_i} a_{ij}.\]   

(iv) Consider the alphabet $\Sigma = \{a_1,a_2,b\}$ and the  $\Sigma^*\mbox{-}$HDA $\A$ obtained from the one depicted in Figure \ref{figKlein} by identifying the horizontal rows and the left and right vertical edges. Then $\A$ is a torus and its $\Z_2$-homology is generated by two 1-dimensional classes with labels $a_1+a_2$ and $b$ and by a 2-dimensional class with label $a_1\wedge b + a_2\wedge b$.

If $a_1 = a_2$, we may identify the horizontal rows of the HDA in Figure \ref{figKlein} not only to obtain a torus but also in a way that yields the Klein bottle. In either case, the $\Z_2$-homology of the resulting HDA is generated by two 1-dimensional classes with labels $0$ and $b$ and by a 2-dimensional class with label $0$.

\end{exs}

\section{Tensor-product HDAs} \label{secTensor}

The tensor product of HDAs models the interleaving parallel composition of concurrent systems (see \cite{transhda}). In this section we show that the labeling cochains and the labeling homomorphisms of a tensor-product HDA are determined by those of its  factors.

\subsection{Tensor product of HDAs} The tensor product of two $M\mbox{-}$HDAs $\A$ and $\B$ is the $M\mbox{-}$HDA $\A\otimes \B$ where $P_{\A \otimes \B} = P_{\A}\otimes P_{\B}$, $I_{\A \otimes \B} = I_{\A}\times I_{\B}$, $F_{\A \otimes \B} = F_{\A}\times F_{\B}$, and $\lambda_{\A \otimes \B}$ is the composite
\[(\A_1\times \B_0) \amalg (\A_0\times \B_1) \to \A_1\amalg \B_1 \xrightarrow{(\lambda_{\A},\lambda_{\B})} M.\]
With respect to this tensor product, the category of $M\mbox{-}$HDAs is a monoidal category.

\subsection{The maps \texorpdfstring{$\zeta$}{} and \texorpdfstring{$\gamma$}{} and the cross products} \label{cross} Given (co)chain complexes $C$ and $D$, let $\kappa$ denote the natural homomorphism of graded modules 
\[H(C)\otimes H(D) \to H(C\otimes D), \quad [x]\otimes [y] \mapsto [x\otimes y].\] 
Let $P$ and $Q$ be precubical sets. The \emph{homology cross product} 
\[ \times \colon  H_*(P) \otimes H_*(Q) \to  H_*(P\otimes Q)\] is defined to be the composite $\zeta_*\circ \kappa$ where $\zeta$ 
is the natural isomorphism of chain complexes \[ C_*(P)\otimes C_*(Q) \to C_*(P\otimes Q)\] given by \[x\otimes y \mapsto (x,y), \quad x\in P,\, y\in Q.\]

Let $A$ and $B$ be modules, and consider the cochain complexes $C^*(P;A) = \Hom (C_*(P),A)$ and $C^*(Q;B) = \Hom (C_*(Q),B)$. The \emph{cohomology cross product} 
\[ \times \colon  H^*(P;A) \otimes H^*(Q;B) \to  H^*(P\otimes Q;A\otimes B)\] is defined to be the composite $H(\Hom(\zeta^{-1},A\otimes B)\circ \gamma) \circ \kappa$ where $\gamma$ is the natural chain map 
\[ \Hom (C_*(P),A) \otimes  \Hom (C_*(Q),B) \to  \Hom (C_*(P)\otimes C_*(Q),A \otimes B) \]     
given by 
\[\gamma(f\otimes g)(x\otimes y) = (-1)^{\deg(g)\deg(x)}f(x) \otimes g(y). \]

The homology cross product for precubical sets coincides with the usual homology cross product for spaces in the sense that it is isomorphic, as one easily checks, to the composite
\[H_*(|P|) \otimes H_*(|Q|) \xrightarrow{\kappa} H(S_*(|P|)\otimes S_*(|Q|)) \xrightarrow{EZ_*} H_*(|P|\times |Q|)\]
where $EZ$ is the \emph{Eilenberg-Zilber map}
\[S_*(|P|) \otimes S_*(|Q|) \to S_*(|P|\times |Q|),\] which for singular cubes $f\colon [0,1]^p \to |P|$ and $g\colon [0,1]^q \to |Q|$ is given by 
\[(f+D_p(S|P|))\otimes (g+D_q(S|Q|)) \mapsto f\times g + D_{p+q}(S(|P|\times |Q|))\] 
(see \cite[VI.3]{Massey}, \cite[8.7.5]{HiltonWylie}). The cohomology cross products for precubical sets and spaces are related in a similar fashion.

\subsection{Labeled homology of tensor-product HDAs} Let $\Sigma$ be an alphabet, and let $\A$ and $\B$ be two $\Sigma^*\mbox{-}$HDAs. 

\begin{lem} \label{hatlambdaotimes}
	Consider elements $x\in \A_p$ and $y \in \B_q$. Then 
	\[\mathfrak{l}^{p+q}_{\A \otimes \B}(x,y) = \mathfrak{l}_{\A}^p(x) \wedge \mathfrak{l}_{\B}^q(y).\]
\end{lem}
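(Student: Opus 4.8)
The plan is to reduce everything to the one-dimensional labeling cochains of the edges $e^0_i$ of the product cube $(x,y)$, using Lemma \ref{hatlambdaedges}, and to identify these edges explicitly in terms of the edges of $x$ and of $y$.

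First I would observe that the canonical morphisms are compatible with the tensor product: under the associativity identification $\llbracket 0,1\rrbracket^{\otimes p}\otimes\llbracket 0,1\rrbracket^{\otimes q} = \llbracket 0,1\rrbracket^{\otimes(p+q)}$ one has $\iota_{p+q} = (\iota_p,\iota_q)$, and the morphism $x_\sharp\otimes y_\sharp$ sends this element to $(x_\sharp(\iota_p),y_\sharp(\iota_q)) = (x,y)$. By the uniqueness clause defining $(x,y)_\sharp$, this yields $(x,y)_\sharp = x_\sharp\otimes y_\sharp$.

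Next, using the description $e^0_i z = z_\sharp(1,\dots,1,\underset{i}{[0,1]},1,\dots,1)$ established in the proof of Lemma \ref{hatlambdae}, I would read off the edges of $(x,y)$. For $1\le i\le p$ the distinguished coordinate lies in the $\A$-factor, so under the tensor decomposition the defining tuple corresponds to the pair consisting of $(1,\dots,\underset{i}{[0,1]},\dots,1)$ in $\llbracket 0,1\rrbracket^{\otimes p}$ and the all-ones tuple in $\llbracket 0,1\rrbracket^{\otimes q}$; applying $(x,y)_\sharp = x_\sharp\otimes y_\sharp$ gives $e^0_i(x,y) = (e^0_ix, v)$, where $v\in\B_0$ is the final vertex of $y$. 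Symmetrically, for $p<i\le p+q$ one gets $e^0_i(x,y) = (u, e^0_{i-p}y)$ with $u\in\A_0$ the final vertex of $x$. Since $(e^0_ix,v)\in\A_1\times\B_0$ and $(u,e^0_{i-p}y)\in\A_0\times\B_1$, the definition of $\lambda_{\A\otimes\B}$ gives $\lambda_{\A\otimes\B}(e^0_i(x,y)) = \lambda_{\A}(e^0_ix)$ in the first case and $\lambda_{\A\otimes\B}(e^0_i(x,y)) = \lambda_{\B}(e^0_{i-p}y)$ in the second. As the degree-one labeling cochain of an edge depends only on its label, this translates into $\mathfrak{l}^1_{\A\otimes\B}(e^0_i(x,y)) = \mathfrak{l}^1_{\A}(e^0_ix)$ for $i\le p$ and $\mathfrak{l}^1_{\A\otimes\B}(e^0_i(x,y)) = \mathfrak{l}^1_{\B}(e^0_{i-p}y)$ for $i>p$.

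Finally, assuming $p,q\ge 1$, I would apply Lemma \ref{hatlambdaedges} to $(x,y)$ and substitute the identities just obtained:
\[
\mathfrak{l}^{p+q}_{\A\otimes\B}(x,y) = \mathfrak{l}^1_{\A}(e^0_1x)\wedge\dots\wedge\mathfrak{l}^1_{\A}(e^0_px)\wedge\mathfrak{l}^1_{\B}(e^0_1y)\wedge\dots\wedge\mathfrak{l}^1_{\B}(e^0_qy),
\]
and then apply Lemma \ref{hatlambdaedges} to $x$ and to $y$ to recognize the right-hand side as $\mathfrak{l}^p_{\A}(x)\wedge\mathfrak{l}^q_{\B}(y)$. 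The degenerate cases $p=0$ or $q=0$ I would treat separately using $\mathfrak{l}^0 = 1$: for instance if $p=0$ then $x$ is a vertex, every $e^0_i(x,y)$ lies over the $\B$-factor, and the computation collapses to $\mathfrak{l}^q_{\B}(y) = 1\wedge\mathfrak{l}^q_{\B}(y)$. I expect the main obstacle to be exactly the bookkeeping in the middle step---correctly matching the back-face structure defining $e^0_i$ with the splitting of coordinates across the two tensor factors, and checking that the vertex in the ``wrong'' factor is indeed a $0$-cube so that the tensor-product labeling applies. Once the factorization $(x,y)_\sharp = x_\sharp\otimes y_\sharp$ is in hand, this is a matter of careful indexing rather than a genuine difficulty.
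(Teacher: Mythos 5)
Your proposal is correct and follows essentially the same route as the paper's proof: identify $e^0_i(x,y)$ as $(e^0_ix,\text{final vertex of }y)$ for $i\leq p$ and as $(\text{final vertex of }x, e^0_{i-p}y)$ for $i>p$, translate this into the corresponding identities for $\lambda_{\A\otimes\B}$ and $\mathfrak{l}^1_{\A\otimes\B}$, and conclude via Lemma \ref{hatlambdaedges}. The only cosmetic difference is that you derive the edge identification through the factorization $(x,y)_\sharp = x_\sharp\otimes y_\sharp$, whereas the paper reads it off directly from the definition of $e^0_i$ and the tensor-product face operators.
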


\begin{proof}
We may suppose that $p+q > 0$. We have 
\[e^0_i(x,y) = \left \{ \begin{array}{ll}
\vspace{0.2cm}
(e^0_ix,\underbracket[0.5pt]{d^1_1\cdots d^1_1}_{q\; {\text{times}}}y), & i\leq p, \\
(\underbracket[0.5pt]{d^1_1\cdots d^1_1}_{p\; {\text{times}}}x,e^0_{i-p}y) & i> p.\end{array} \right.\]
Hence \[\lambda_{\A \otimes \B}(e^0_i(x,y)) = \left \{ \begin{array}{ll} \lambda_{\A}(e^0_ix), & i\leq p, \\ \lambda_{\B}(e^0_{i-p}y) & i> p.\end{array} \right.\]
Therefore
\[\mathfrak{l}_{\A \otimes \B}^{1}(e^0_i(x,y)) = \left \{ \begin{array}{ll} \mathfrak{l}_{\A}^1(e^0_ix), & i\leq p, \\ \mathfrak{l}_{\B}^1(e^0_{i-p}y) & i> p.\end{array} \right.\]
By Lemma \ref{hatlambdaedges}, it follows that \begin{align*}\mathfrak{l}_{\A\otimes \B}^{p+q}(x,y) &= \mathfrak{l}_{\A \otimes \B}^1(e^0_1(x,y))\wedge  \cdots \wedge \mathfrak{l}_{\A \otimes \B}^1(e^0_{p+q}(x,y))\\&= \mathfrak{l}_{\A}^1(e^0_1x) \wedge \cdots \wedge \mathfrak{l}_{\A}^1(e^0_px) \wedge   \mathfrak{l}_{\B}^1(e^0_{1}y) \wedge \cdots \wedge \mathfrak{l}_{\B}^1(e^0_{q}y)\\  &=  \mathfrak{l}_{\A}^p(x) \wedge \mathfrak{l}_{\B}^q(y). \qedhere 
\end{align*} 
\end{proof}

Using the multiplication of $\Lambda(\Sigma)$ and the maps $\zeta$ and $\gamma$ defined in Section \ref{cross}, we are now able to express the labeling cochains of $\A\otimes \B$ in terms of the labeling cochains of $\A$ and $\B$:

\begin{prop} \label{crosslem}
For each $n \geq 0$, ${\mathfrak{l}_{\A \otimes \B}^{n}} = \sum \limits_{i+j=n} {(-1)^{ij} \wedge \circ \gamma(\mathfrak{l}_{\A}^{i}\otimes \mathfrak{l}_{\B}^{j})\circ \zeta^{-1}}$.
\end{prop}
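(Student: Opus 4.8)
The plan is to verify the asserted equality at the level of cochains by evaluating both sides on a basis element $(x,y)$ of $C_n(\A \otimes \B)$, where $x \in \A_p$ and $y \in \B_q$ with $p + q = n$. Since $C_n(\A \otimes \B)$ is free on such pairs $(x,y)$ and both sides of the equation are homomorphisms $C_*(\A \otimes \B) \to \Lambda(\Sigma)$, agreement on these generators is enough. The case $n = 0$ (where $p = q = 0$ and both sides return $1$) is immediate, so I would concentrate on $n > 0$.

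First I would unwind the right-hand side. Applying $\zeta^{-1}$ sends $(x,y)$ to $x \otimes y$. For a fixed pair $(i,j)$ with $i + j = n$, the definition of $\gamma$ gives
\[
\gamma(\mathfrak{l}_\A^i \otimes \mathfrak{l}_\B^j)(x \otimes y) = (-1)^{\deg(\mathfrak{l}_\B^j)\deg(x)}\, \mathfrak{l}_\A^i(x) \otimes \mathfrak{l}_\B^j(y).
\]
The decisive simplification is that $\mathfrak{l}_\A^i$ vanishes on $\A_m$ for $m \neq i$ and $\mathfrak{l}_\B^j$ vanishes on $\B_m$ for $m \neq j$; hence the only summand of $\sum_{i+j=n}$ that can be nonzero is the one with $i = p$ and $j = q$. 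Applying the exterior multiplication $\wedge$ to this term produces $\mathfrak{l}_\A^p(x) \wedge \mathfrak{l}_\B^q(y)$, up to the accumulated sign.

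The crux is the sign bookkeeping, which I expect to be the only genuine obstacle. The surviving summand carries the explicit factor $(-1)^{ij} = (-1)^{pq}$ together with the Koszul sign produced by $\gamma$, namely $(-1)^{\deg(\mathfrak{l}_\B^q)\deg(x)} = (-1)^{(-q)p} = (-1)^{pq}$, since $\mathfrak{l}_\B^q$ has degree $-q$ and $\deg(x) = p$. These two signs multiply to $(-1)^{2pq} = 1$, so the right-hand side evaluates to exactly $\mathfrak{l}_\A^p(x) \wedge \mathfrak{l}_\B^q(y)$. In other words, the role of the prefactor $(-1)^{ij}$ in the statement is precisely to cancel the sign inserted by $\gamma$, and once this cancellation is checked there is no further difficulty.

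Finally, Lemma \ref{hatlambdaotimes} identifies $\mathfrak{l}_\A^p(x) \wedge \mathfrak{l}_\B^q(y)$ with $\mathfrak{l}_{\A \otimes \B}^{p+q}(x,y) = \mathfrak{l}_{\A \otimes \B}^n(x,y)$, which is the left-hand side evaluated on $(x,y)$. Since the two sides agree on every basis element, they agree as cochains, completing the proof.
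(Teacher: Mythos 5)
Your proof is correct and takes essentially the same route as the paper's: both evaluate the two sides on a basis element $(x,y)$ with $x\in\A_p$, $y\in\B_q$, observe that for degree reasons only the summand with $(i,j)=(p,q)$ survives, verify that the prefactor $(-1)^{pq}$ cancels the Koszul sign $(-1)^{p(-q)}$ introduced by $\gamma$, and conclude by Lemma \ref{hatlambdaotimes}. Your explicit remark that the role of $(-1)^{ij}$ is precisely to cancel the sign from $\gamma$ is a nice way of phrasing what the paper's computation shows implicitly.
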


\begin{proof}
Let $n\geq 0$. Consider elements ${x \in \A_p}$ and ${y \in \B_q}$ where $p+q = n$. By Lemma \ref{hatlambdaotimes}, 
\begin{align*}
{(-1)^{pq} \wedge \circ \gamma(\mathfrak{l}_{\A}^{p}\otimes \mathfrak{l}_{\B}^{q})\circ \zeta^{-1}(x,y)} &= {(-1)^{pq} \wedge \circ \gamma(\mathfrak{l}_{\A}^{p}\otimes \mathfrak{l}_{\B}^{q})(x\otimes y)}\\ &= {(-1)^{pq} \wedge((-1)^{p(-q)}\mathfrak{l}_{\A}^p(x) \otimes \mathfrak{l}_{\B}^q(y))}\\ &= {\mathfrak{l}_{\A}^p(x) \wedge \mathfrak{l}_{\B}^q(y)}\\
&=  \mathfrak{l}_{\A \otimes \B}^{n} (x,y).
\end{align*}
For $p'\not=p$ and $q'\not=q$ with $p'+q' = n$,
\[(-1)^{p'q'} \wedge \circ \gamma(\mathfrak{l}_{\A}^{p'}\otimes \mathfrak{l}_{\B}^{q'})\circ \zeta^{-1}(x,y) = \pm \, {\mathfrak{l}_{\A}^{p'}(x) \wedge \mathfrak{l}_{\B}^{q'}(y)} = 0.\]
Hence ${\mathfrak{l}_{\A \otimes \B}^{n}(x,y)} = \sum \limits_{i+j=n} {(-1)^{ij} \wedge \circ \gamma(\mathfrak{l}_{\A}^{i}\otimes \mathfrak{l}_{\B}^{j})\circ \zeta^{-1}(x,y)}$.
\end{proof}

Let us write $\wedge_*$ to denote the map \[H^*(\A\otimes \B,\Lambda (\Sigma)\otimes \Lambda(\Sigma)) \to H^*(\A\otimes \B,\Lambda (\Sigma))\] induced by the multiplication of $\Lambda (\Sigma)$.

\begin{theor} \label{cocrosslabel}
For all $n \geq 0$, $[\mathfrak{l}_{\A \otimes \B}^{n}] = \wedge_*(\sum\limits_{i+j=n} (-1)^{ij} [\mathfrak{l}_{\A}^{i}] \times [\mathfrak{l}_{\B}^{j}])$.
\end{theor}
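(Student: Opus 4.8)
The plan is to deduce the theorem directly from the cochain-level identity of Proposition \ref{crosslem} by unwinding the definitions of the cohomology cross product and of the map $\wedge_*$. Both sides of the asserted equation are cohomology classes, whereas Proposition \ref{crosslem} is an equality of cochains; the essential point is therefore that passing to cohomology is functorial, so that the class represented by the right-hand cochain of Proposition \ref{crosslem} is computed term by term by the cross products and the multiplication map. Since the labeling cochains are cocycles (Proposition \ref{cocycle}), all the classes involved are well defined.

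First I would unwind the cohomology cross product. By its definition in Section \ref{cross}, $\times$ is the composite $H(\Hom(\zeta^{-1},\Lambda(\Sigma)\otimes\Lambda(\Sigma))\circ\gamma)\circ\kappa$. The definition of $\kappa$ gives $\kappa([\mathfrak{l}_{\A}^i]\otimes[\mathfrak{l}_{\B}^j]) = [\mathfrak{l}_{\A}^i\otimes\mathfrak{l}_{\B}^j]$, and applying the map induced by the chain map $\Hom(\zeta^{-1},\Lambda(\Sigma)\otimes\Lambda(\Sigma))\circ\gamma$ yields
\[
[\mathfrak{l}_{\A}^i]\times[\mathfrak{l}_{\B}^j] = [\gamma(\mathfrak{l}_{\A}^i\otimes\mathfrak{l}_{\B}^j)\circ\zeta^{-1}]\in H^{i+j}(\A\otimes\B;\Lambda(\Sigma)\otimes\Lambda(\Sigma)),
\]
since $\Hom(\zeta^{-1},\Lambda(\Sigma)\otimes\Lambda(\Sigma))$ is just precomposition with $\zeta^{-1}$.

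Next I would unwind $\wedge_*$. This map is induced by postcomposition with the multiplication $\wedge\colon\Lambda(\Sigma)\otimes\Lambda(\Sigma)\to\Lambda(\Sigma)$ on coefficients, so at the cochain level it sends $\phi\mapsto\wedge\circ\phi$, whence on classes
\[
\wedge_*\bigl([\gamma(\mathfrak{l}_{\A}^i\otimes\mathfrak{l}_{\B}^j)\circ\zeta^{-1}]\bigr) = [\wedge\circ\gamma(\mathfrak{l}_{\A}^i\otimes\mathfrak{l}_{\B}^j)\circ\zeta^{-1}].
\]
Combining the last two displays and using linearity of the assignment of cohomology classes, the right-hand side of the theorem equals
\[
\Bigl[\sum_{i+j=n}(-1)^{ij}\,\wedge\circ\gamma(\mathfrak{l}_{\A}^i\otimes\mathfrak{l}_{\B}^j)\circ\zeta^{-1}\Bigr],
\]
and by Proposition \ref{crosslem} the cochain inside the brackets is exactly $\mathfrak{l}_{\A\otimes\B}^n$, giving $[\mathfrak{l}_{\A\otimes\B}^n]$ as required.

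The computation is essentially formal, and I expect no genuine obstacle beyond careful bookkeeping of the definitions. The only points requiring attention are verifying that $\Hom(\zeta^{-1},-)$ is indeed precomposition with $\zeta^{-1}$ and that the sign $(-1)^{ij}$ together with the degree conventions match those in Proposition \ref{crosslem}, so that the term-by-term images under the cross product and under $\wedge_*$ coincide precisely with the summands of the cochain formula.
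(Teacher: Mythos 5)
Your proposal is correct and follows essentially the same route as the paper: the paper's proof likewise reduces the theorem to Proposition \ref{crosslem} via the identity $\wedge_*([\mathfrak{l}_{\A}^{i}] \times [\mathfrak{l}_{\B}^{j}]) = [\wedge\circ \gamma(\mathfrak{l}_{\A}^{i}\otimes \mathfrak{l}_{\B}^{j})\circ \zeta^{-1}]$, which is exactly the unwinding you carry out. Your version merely spells out the bookkeeping that the paper leaves implicit.
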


\begin{proof}
This follows immediately from Proposition \ref{crosslem} because  \[{\wedge_*([\mathfrak{l}_{\A}^{i}] \times [\mathfrak{l}_{\B}^{j}])} = {[\wedge\circ \gamma(\mathfrak{l}_{\A}^{i}\otimes \mathfrak{l}_{\B}^{j})\circ \zeta^{-1}]}. \qedhere\]
\end{proof}

We shall show next that the label of the cross product of two homology classes is the exterior product of their labels. This implies that the labeling homomorphisms of $\A \otimes \B$ are entirely determined by those of $\A$ and $\B$. Indeed, by the Künneth theorem, there exists a graded torsion module $U\subseteq H_*(\A\otimes \B)$ such that \[{H_*(\A\otimes \B)} = U \oplus \im \;H_*(\A)\otimes H_*(\B)\xrightarrow{\times} H_*(\A\otimes \B),\] and, as we have noted in Example \ref{firstexs}(ii), torsion elements have zero label.

\begin{theor} \label{crosslabel} 
For all homology classes $\alpha \in H_p(\A)$ and $\beta \in H_q(\B)$, \[\ell_{\A \otimes \B}(\alpha \times \beta) = \ell_{\A}(\alpha)\wedge \ell_{\B}(\beta).\]
\end{theor}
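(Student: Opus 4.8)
The plan is to reduce the statement directly to Lemma \ref{hatlambdaotimes} by writing the cross product in terms of representing cycles. First I would choose cycles $a \in C_p(\A)$ and $b \in C_q(\B)$ with $\alpha = [a]$ and $\beta = [b]$. Since $a \otimes b$ is a cycle in $C_*(\A) \otimes C_*(\B)$ and $\zeta$ is a chain map, the definition of the homology cross product as $\zeta_* \circ \kappa$ gives
\[
\alpha \times \beta = \zeta_*(\kappa([a]\otimes[b])) = \zeta_*([a\otimes b]) = [\zeta(a\otimes b)].
\]
Because the labeling cochain $\mathfrak{l}_{\A \otimes \B}^{p+q}$ is a cocycle (Proposition \ref{cocycle}), the labeling homomorphism is well defined on this class, and therefore $\ell_{\A \otimes \B}(\alpha \times \beta) = \mathfrak{l}_{\A \otimes \B}^{p+q}(\zeta(a\otimes b))$.

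Next I would expand everything in the canonical bases. Writing $a = \sum_x a_x\, x$ with $x$ ranging over $\A_p$ and $b = \sum_y b_y\, y$ with $y$ ranging over $\B_q$, the isomorphism $\zeta$ yields $\zeta(a\otimes b) = \sum_{x,y} a_x b_y\,(x,y)$. Since $\mathfrak{l}_{\A \otimes \B}^{p+q}$ is a homomorphism, evaluating term by term and applying Lemma \ref{hatlambdaotimes} gives
\[
\mathfrak{l}_{\A \otimes \B}^{p+q}(\zeta(a\otimes b)) = \sum_{x,y} a_x b_y\, \mathfrak{l}_{\A \otimes \B}^{p+q}(x,y) = \sum_{x,y} a_x b_y\, \mathfrak{l}_{\A}^{p}(x) \wedge \mathfrak{l}_{\B}^{q}(y).
\]
Finally, using bilinearity of the exterior product, this factors as $\bigl(\sum_x a_x\, \mathfrak{l}_{\A}^{p}(x)\bigr) \wedge \bigl(\sum_y b_y\, \mathfrak{l}_{\B}^{q}(y)\bigr) = \mathfrak{l}_{\A}^{p}(a) \wedge \mathfrak{l}_{\B}^{q}(b)$, which by definition of the labeling homomorphisms equals $\ell_{\A}(\alpha) \wedge \ell_{\B}(\beta)$, as claimed.

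The entire genuine content of the theorem—the interaction between the labeling of a product cube and the labelings of its two factors, together with the grading bookkeeping—has already been absorbed into Lemma \ref{hatlambdaotimes}, so I do not expect any real obstacle here. The only points requiring care are the identification of $\alpha \times \beta$ with the class of $\zeta(a\otimes b)$, which uses that $\zeta$ is a chain isomorphism, and the passage from the cochain values on basis cubes to chains via linearity. One could alternatively deduce the result from Theorem \ref{cocrosslabel} by invoking the compatibility of the Kronecker pairing with the homology and cohomology cross products, but that route merely reorganizes the same computation through additional pairing formalism, so the direct argument above is preferable.
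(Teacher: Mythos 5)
Your proof is correct and follows essentially the same route as the paper's: both choose representing cycles, identify $\alpha\times\beta$ with $[\zeta(a\otimes b)]$, and reduce the computation to Lemma \ref{hatlambdaotimes}. The only cosmetic difference is that the paper channels the calculation through Proposition \ref{crosslem} (whose signs cancel in the end), whereas you expand $\zeta(a\otimes b)$ in the canonical basis and apply the lemma directly, which amounts to inlining the proof of that proposition.
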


\begin{proof}
Let ${a\in C_p(\A)}$ and ${b\in C_q(\B)}$ be cycles such that ${\alpha = [a]}$ and ${\beta = [b]}$. By Proposition  \ref{crosslem}, we have 
\begin{align*}
{\ell_{\A \otimes \B}(\alpha \times \beta)} &= {\ell_{\A \otimes \B}([a]\times [b])}\\ 
&= {\ell_{\A \otimes \B}([\zeta(a \otimes b)])}\\ 
&= {\mathfrak{l}_{\A \otimes \B}^{p+q}(\zeta(a \otimes b))}\\ 
&=
\sum \limits_{i+j=p+q}
 {(-1)^{ij}\wedge\circ \gamma(\mathfrak{l}_{\A}^{i}\otimes \mathfrak{l}_{\B}^{j})\circ \zeta^{-1}(\zeta(a\otimes b))}\\ 
 &=
 \sum \limits_{i+j=p+q}
 {(-1)^{ij} (-1)^{p(-j)}\mathfrak{l}_{\A}^{i}(a)\wedge \mathfrak{l}_{\B}^{j}(b)}\\
 &= {(-1)^{pq} (-1)^{pq}\mathfrak{l}_{\A}^{p}(a)\wedge \mathfrak{l}_{\B}^{q}(b)}\\ 
 &= {\mathfrak{l}_{\A}^{p}(a)\wedge \mathfrak{l}_{\B}^{q}(b)}\\   
 &=  {\ell_{\A}(\alpha)\wedge \ell_{\B}(\beta)}. 
 \qedhere   
\end{align*}
\end{proof}

\begin{ex}
A \emph{directed torus} is a $\Sigma^*\mbox{-}$HDA of the form $\A \otimes \B$ where $\A$ and $\B$ are directed circles. Let $x_0, \dots, x_{n-1}$ and $y_0, \dots, y_{m-1}$ be the edges of $\A$ and $\B$, respectively, and consider the cycles $x = \sum_i x_i$ and $y = \sum_jy_j$. Then the chain $z = \sum_{i,j}(x_i,y_j)$ is a cycle in $C_*(\A\otimes \B)$ and $[z] = [x]\times [y]$. Suppose that $\lambda_{\A}(x_i)  = a_{i1}\cdots a_{ir_i}$ and $\lambda_{\B}(y_j) =  b_{j1}\cdots b_{js_j}$ with all $a_{ip}, b_{jq} \in \Sigma$. Then \[\ell_{\A\otimes \B}([z]) = {\ell_{\A}([x])\wedge \ell_{\B}([y])} =\sum _{i,p,j,q} a_{ip}\wedge b_{jq}.\] 
\end{ex}

\section{Cubical dimaps} \label{secDimaps} 
The purpose of this section is to introduce cubical dimaps. Informally, a cubical dimap between two HDAs is a continuous map between their geometric realizations that sends cubes in an order-preserving way to subdivided cubes and that preserves labels of paths. Cubical dimaps are more flexible than morphisms of HDAs and permit one to relate HDAs that model concurrent systems at different refinement levels.

\subsection{Permutation maps} 
Given $n$ precubical sets
$P_1, \dots, P_n$ and a permutation $\sigma \in S_n$, we denote by $t_{\sigma}$ the \emph{permutation map} 
\[|P_1\otimes \dots \otimes P_n| = |P_1|\times \dots \times |P_n| \to |P_{\sigma(1)}\otimes \dots \otimes P_{\sigma(n)}| = |P_{\sigma(1)}|\times \dots \times |P_{\sigma(n)}|\]
given by 
\[(a_1, \dots, a_n)\mapsto  (a_{\sigma(1)}, \dots , a_{\sigma(n)}).\]
We remark that for permutations $\sigma, \tau \in S_n$, $t_{\sigma \circ \tau} = t_{\tau} \circ t_{\sigma}$. If $n=2$ and $\sigma$ is the transposition $(1\;2)$, we write $t$ instead of $t_{\sigma}$.
\subsection{Cubical dimaps of precubical sets}

An \emph{elementary cubical dimap (directed map)} from a precubical set $Q$ to a precubical set $P$ is a continuous map $f\colon |Q| \to |P|$ such that the following two conditions hold:
\begin{enumerate}
	\item For every vertex ${v\in Q_0}$, there exists a (necessarily unique) vertex $w\in P_0$ such that ${f([v,()])} = {[w,()]}$.
	\item For every element ${x\in Q_n}$ ${(n>0)}$, there exist integers $l_1, \dots, l_n\geq 1$, a morphism of precubical sets ${\chi \colon \llbracket 0,{l_1} \rrbracket\otimes \cdots \otimes \llbracket 0,{l_n} \rrbracket} \to P$, a permutation $\sigma \in S_n$, and increasing homeomorphisms ${\phi_i \colon |\llbracket 0,1 \rrbracket| = [0,1] \to |\llbracket 0,{l_i} \rrbracket|} =  [0,l_i ]$ $(i \in {\{1, \dots, n\})}$  such that ${f\circ |x_{\sharp}|} = {|\chi| \circ (\phi_1 \times \cdots \times \phi_n)\circ t_{\sigma}}$.
\end{enumerate}
A \emph{cubical dimap} of precubical sets is a finite composite of elementary cubical dimaps. 

\begin{exs} \label{dimapexs}
(i) The geometric realization of a morphism of precubical sets is an elementary cubical dimap.

(ii) An important class of cubical dimaps is given by subdivisions in the sense of \cite{weakmor}: A subdivision of a precubical set $Q$ consists of a precubical set $P$ and a homeomorphism $|Q| \to |P|$ that is an elementary cubical dimap such that the permutation in condition (2) of the definition is always the identity.

(iii) For precubical sets $P_1, \dots, P_n$ and a permutation $\sigma \in S_n$, the permutation map
$t_{\sigma}\colon |P_1\otimes \dots \otimes P_n| \to |P_{\sigma(1)}\otimes \dots \otimes P_{\sigma(n)}|$
is an elementary cubical dimap. As we do not need this fact in this paper, we omit the details.

\end{exs}

\begin{rems}
(i) Condition (2) in the definition of elementary cubical dimaps guarantees that cubical dimaps are directed maps in the sense that they respect the directed topology of geometric realizations of precubical sets. A prominent framework for directed topology is Grandis' category of d-spaces \cite{GrandisBook}. A cubical dimap can be seen as a d-map, i.e., a morphism of d-spaces.

(ii) Cubical dimaps are closely related to weak morphisms as studied in \cite{hgraph, weakmor, topabs}. Although the two concepts are formally incomparable, a weak morphism will normally be a cubical dimap. Permutation maps are cubical dimaps but not in general weak morphisms. 

(iii) By construction, a cubical dimap is a cellular map. 
\end{rems}

We next show that the objects in condition (2) of the definition of elementary cubical dimaps are unique. 

\begin{sloppypar}
\begin{lem} {\rm \cite[2.3.3]{weakmor}} \label{lemuniqueness}
	Consider integers ${n, k_1, \dots, k_n,l_1, \dots, l_n\geq 1}$, morphisms of precubical sets ${\xi \colon \llbracket 0,{k_1} \rrbracket\otimes \cdots \otimes \llbracket 0,{k_n} \rrbracket \to P}$ and $\zeta \colon {\llbracket 0,{l_1} \rrbracket\otimes \cdots \otimes \llbracket 0,{l_n} \rrbracket} \to P$, and a homeomorphism $\alpha \colon {|\llbracket 0,{k_1} \rrbracket\otimes \cdots \otimes \llbracket 0,{k_n} \rrbracket|} \to {|\llbracket 0,{l_1} \rrbracket\otimes \cdots \otimes \llbracket 0,{l_n} \rrbracket|}$ such that ${|\zeta| \circ \alpha}  = |\xi|$. Then $k_i = l_i$ for all $i \in \{1, \dots,n\}$, $\alpha = id$, and $\xi = \zeta$.  
\end{lem}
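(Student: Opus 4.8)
The plan is to use the identifications fixed in Section~\ref{secHDA} to regard the two realizations as the rectangular boxes $B = [0,k_1]\times\cdots\times[0,k_n]$ and $B' = [0,l_1]\times\cdots\times[0,l_n]$ in $\R^n$, each subdivided into unit cells indexed by the cubes of the corresponding tensor product. The goal is to recover this subdivision intrinsically from $|\xi|$ and $|\zeta|$ and then to deduce from $|\zeta|\circ\alpha = |\xi|$ that $\alpha$ is a translation, after which everything follows at once.

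First I would record that, since morphisms of precubical sets preserve degree, $|\xi|$ sends the open cell of a cube $c$ into the open cell of $\xi(c)$, of the same dimension. Hence for every $m$ the preimage under $|\xi|$ of the $m$-skeleton of $|P|$ is exactly the $m$-skeleton of $B$, and similarly for $|\zeta|$ and $B'$. Feeding this into $|\zeta|\circ\alpha = |\xi|$ shows that $\alpha$ maps the $(n-1)$-skeleton of $B$ onto that of $B'$; as $\alpha$ is a homeomorphism it must then carry the components of the complement onto the components of the complement, i.e.\ it maps each open top-dimensional cell of $B$ homeomorphically onto one of $B'$.

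The decisive step is a local computation on a single top cell $C$ of $B$, say the unit cube with minimal corner $a\in\Z^n$ corresponding to the cube $c$, whose image $\alpha(C)$ is a top cell $C'$ of $B'$ with minimal corner $b$ corresponding to a cube $c'$. For $u$ in the interior of $C$ the canonical representatives give $|\xi|(u) = [\xi(c),u-a]$ and $|\zeta|(\alpha(u)) = [\zeta(c'),\alpha(u)-b]$, both lying in open cells of $|P|$; since distinct open cells are disjoint, equality of these points forces $\xi(c) = \zeta(c')$ together with $u-a = \alpha(u)-b$, whence $\alpha(u) = u + (b-a)$. Thus $\alpha$ restricts to translation by the integer vector $b-a$ on $C$. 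I would then glue these pieces: two top cells sharing a facet receive the same translation, since the two translations agree on the shared $(n-1)$-dimensional face and a translation is determined by its value anywhere; because the adjacency graph of the unit cells of $B$ is connected, $\alpha$ is a single global translation by a vector $w$.

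A translation by $w$ carrying $B$ onto $B'$ must satisfy $[w_i, k_i + w_i] = [0,l_i]$ in each coordinate, which forces $w = 0$, hence $\alpha = id$ and $k_i = l_i$ for all $i$. With $\alpha = id$ the hypothesis becomes $|\xi| = |\zeta|$, and the open-cell argument above (now with $c' = c$) gives $\xi(c) = \zeta(c)$ for every cube $c$, so $\xi = \zeta$. The one genuinely delicate point is the second step, namely reading off the combinatorial cell structure topologically from degree preservation; once the top cells are known to correspond under $\alpha$, the rest is just the rigidity of a homeomorphism whose restrictions to the pieces are affine and are forced to match along overlaps. A cruder cell count would only yield equality of the multisets $\{k_i\}$ and $\{l_i\}$, not the ordered equality, so the map data $\xi,\zeta$ really is needed.
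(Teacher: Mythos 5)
The paper does not actually prove Lemma \ref{lemuniqueness}: it is imported verbatim from \cite[2.3.3]{weakmor}, so there is no in-paper argument to measure yours against. Taken on its own terms, your proof is correct. The three ingredients all hold and are deployed in the right order: a morphism of precubical sets preserves degree and sends open cells homeomorphically onto open cells, so $|\xi|^{-1}$ and $|\zeta|^{-1}$ of the skeleta of $|P|$ are the skeleta of the two boxes and $\alpha$ therefore permutes the open top cells; every point of $|P|$ has a unique nondegenerate representative $[x,u]$ with $u\in(0,1)^{\deg x}$, which converts the equation $[\xi(c),u-a]=[\zeta(c'),\alpha(u)-b]$ into $\xi(c)=\zeta(c')$ and $\alpha(u)=u+(b-a)$ on each open top cell (this is also exactly where a coordinate permutation is excluded, justifying your closing remark that the maps to $P$, and not just the homeomorphism, are needed for the ordered equality $k_i=l_i$); and the facet-adjacency graph of the unit cells of a box is connected, which globalizes the translation, after which $B+w=B'$ forces $w=0$. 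The only step I would ask you to spell out is the last one: you need $\xi(c)=\zeta(c)$ for \emph{all} cubes $c$, not just the top-dimensional ones. Your open-cell argument does apply verbatim in every dimension (alternatively, every cube of $\llbracket 0,k_1\rrbracket\otimes\cdots\otimes\llbracket 0,k_n\rrbracket$ is an iterated face of a top cube and morphisms commute with the face operators), so this is a presentational gap rather than a mathematical one.
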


\begin{prop} \label{elcontuniqueness} 
	Let $f\colon |Q| \to |P|$ be an elementary cubical dimap of precubical sets, and let $x \in Q_n$ $(n \geq 1)$ be an element. Then there exist unique integers $l_1, \dots, l_n\geq 1$, a unique morphism of precubical sets $\chi \colon {\llbracket 0,{l_1} \rrbracket\otimes \cdots \otimes \llbracket 0,{l_n} \rrbracket} \to P$, a unique permutation $\sigma \in S_n$, and unique increasing homeomorphisms $\phi_i \colon {|\llbracket 0,1 \rrbracket|} = [0,1] \to {|\llbracket 0,{l_i} \rrbracket|} =  [0,l_i ]$ $(i \in \{1, \dots, n\})$  such that ${f\circ |x_{\sharp}|} = {|\chi| \circ (\phi_1 \times \cdots \times \phi_n)\circ t_{\sigma}}$.  	
\end{prop}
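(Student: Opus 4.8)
The plan is to prove existence from the definition and then extract uniqueness from Lemma~\ref{lemuniqueness}. For existence, recall that an elementary cubical dimap $f$ is by definition a continuous map satisfying conditions (1) and (2). Condition (2), applied directly to the given element $x \in Q_n$ with $n \geq 1$, asserts precisely that there exist integers $l_1, \dots, l_n \geq 1$, a morphism $\chi$, a permutation $\sigma$, and increasing homeomorphisms $\phi_i$ with $f \circ |x_\sharp| = |\chi| \circ (\phi_1 \times \cdots \times \phi_n) \circ t_\sigma$. So existence is immediate; the content of the proposition is entirely the uniqueness assertion.

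For uniqueness, I would suppose we are given two such data, say $(l_1,\dots,l_n,\chi,\sigma,\phi_1,\dots,\phi_n)$ and $(l'_1,\dots,l'_n,\chi',\sigma',\phi'_1,\dots,\phi'_n)$, both satisfying the equation. The strategy is to manipulate the two equations into the shape required by Lemma~\ref{lemuniqueness}, which compares two morphisms out of tensor products of precubical intervals that agree after precomposition with a homeomorphism. From
\[
|\chi| \circ (\phi_1 \times \cdots \times \phi_n)\circ t_{\sigma} = |\chi'| \circ (\phi'_1 \times \cdots \times \phi'_n)\circ t_{\sigma'},
\]
I would isolate $|\chi|$ against $|\chi'|$ by composing on the right with the inverse of one of the two homeomorphisms, obtaining $|\chi'| \circ \alpha = |\chi|$ for the homeomorphism
\[
\alpha = (\phi'_1 \times \cdots \times \phi'_n)\circ t_{\sigma'} \circ t_{\sigma}^{-1} \circ (\phi_1 \times \cdots \times \phi_n)^{-1}.
\]
Lemma~\ref{lemuniqueness} then forces $l_i = l'_i$ for all $i$, $\alpha = \mathrm{id}$, and $\chi = \chi'$. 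This disposes of the $l_i$ and of $\chi$ at once.

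The remaining and most delicate point is to deduce $\sigma = \sigma'$ and $\phi_i = \phi'_i$ from $\alpha = \mathrm{id}$, i.e.\ from the identity
\[
(\phi'_1 \times \cdots \times \phi'_n)\circ t_{\sigma'} = (\phi_1 \times \cdots \times \phi_n)\circ t_{\sigma}
\]
of homeomorphisms $[0,1]^n \to [0,l_1]\times\cdots\times[0,l_n]$. The main obstacle is organizing this coordinate bookkeeping cleanly, since the permutation maps shuffle coordinates while the $\phi_i$ rescale them. Here I would evaluate both sides on a test point and track coordinates. Writing out the $s$th coordinate of each side, using that $t_\sigma(a_1,\dots,a_n) = (a_{\sigma(1)},\dots,a_{\sigma(n)})$ and that $\phi_i$ acts on the $i$th factor, one sees that the $s$th output coordinate on the left is $\phi'_s$ applied to the $\sigma'(s)$th input coordinate, while on the right it is $\phi_s$ applied to the $\sigma(s)$th input coordinate. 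Since each $\phi_i,\phi'_i$ is a strictly increasing homeomorphism $[0,1]\to[0,l_i]$ and hence nonconstant, varying a single input coordinate and holding the others fixed shows that the two sides can agree for all inputs only if, for each $s$, the dependence is on the same input coordinate, giving $\sigma(s) = \sigma'(s)$, hence $\sigma = \sigma'$; and then $\phi'_s = \phi_s$ on all of $[0,1]$ as functions of that single coordinate. This completes the uniqueness, and together with the immediate existence finishes the proof.
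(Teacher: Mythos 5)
Your proposal is correct and follows essentially the same route as the paper: existence straight from condition (2), then Lemma~\ref{lemuniqueness} applied to $\alpha = \psi\circ\phi^{-1}$ to get the $l_i$, $\chi$, and $\phi = \psi$, and finally a coordinate-tracking argument (the paper uses the specific test points with a single entry $\tfrac12$ and zeros elsewhere, which is just a concrete instance of your ``vary one input coordinate'' argument) to extract $\sigma = \sigma'$ and then $\phi_i = \phi'_i$. No gaps.
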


\begin{proof}
	The existence of $l_1, \dots, l_n$, $\chi $, $\sigma$, and $\phi_1, \dots, \phi_n$ is guaranteed by the definition of elementary cubical dimaps. Suppose that the integers $l'_1, \dots, l'_n$, the morphism of precubical sets $\chi'  \colon \llbracket 0,{l'_1} \rrbracket \otimes \cdots \otimes \llbracket 0,{l'_n} \rrbracket \to P$, the permutation ${\sigma' \in S_n}$, and the increasing homeomorphisms $\phi'_i \colon  |\llbracket 0,{1} \rrbracket|  \to |\llbracket 0,{l'_i} \rrbracket|$ $(i \in \{1, \dots, n\})$ satisfy ${f\circ |x_{\sharp}|} = {|\chi' |\circ (\phi'_1 \times \cdots \times \phi'_n)\circ t_{\sigma'}}$. Consider the homeomorphisms $\phi = {(\phi_1 \times \cdots \times \phi_n)\circ t_{\sigma}}$, $\psi = {(\phi'_1 \times \cdots \times \phi'_n)\circ t_{\sigma'}}$, and $\alpha = {\psi \circ \phi^{-1}} \colon {|\llbracket 0,{l_1} \rrbracket \otimes \cdots \otimes \llbracket 0,{l_n} \rrbracket|} \to {|\llbracket 0,{l'_1} \rrbracket \otimes \cdots \otimes \llbracket 0,{l'_n} \rrbracket|}$. Then $|\chi'| \circ \alpha = {|\chi'|\circ \psi \circ \phi^{-1}} = f\circ |x_{\sharp} | \circ \phi^{-1} = |\chi| \circ \phi \circ \phi^ {-1} = |\chi|$. By Lemma \ref{lemuniqueness}, it follows that $l'_i = l_i$ for all $i \in \{1, \dots,n\}$, $\psi = \phi$, and $\chi' = \chi$. We show that $\sigma' = \sigma$. This then also implies that $\phi'_1\times \dots \times \phi'_n = \phi_1\times \dots \times \phi_n$ and hence that $\phi'_i = \phi_i$ for all $i$. Consider $i \in \{1, \dots, n\}$, and set \[x_j = \left\{ \begin{array}{ll}0, & j \not= \sigma(i),\\ \frac{1}{2}, & j = \sigma(i). \end{array} \right.\] We have 
	\begin{eqnarray*}
		(\phi_1\times \dots \times \phi_n)\circ t_{\sigma}(x_1, \dots ,x_n) &=& (\phi_1(x_{\sigma(1)}), \dots ,\phi_i(x_{\sigma(i)}), \dots , \phi_n(x_{\sigma(n)}))\\
		&=& (0, \dots, 0, \phi_i(\tfrac{1}{2}), 0, \dots ,0).
	\end{eqnarray*}
	On the other hand, 
	\[(\phi'_1\times \dots \times \phi'_n)\circ t_{\sigma'}(x_1, \dots ,x_n) = (\phi'_1(x_{\sigma'(1)}), \dots ,\phi'_n(x_{\sigma'(n)})).\]
	Hence \[\phi'_j(x_{\sigma'(j)}) = \left\{ \begin{array}{ll}0, & j \not= i,\\ \phi_i(\frac{1}{2}), & j = i. \end{array} \right.\]
	Since $\phi_i$ and the $\phi'_j$ are increasing homeomorphisms, this implies that 
	\[x_{\sigma'(j)} \not= 0 \Leftrightarrow j=i.\] Thus $\sigma'(i) = \sigma(i)$. 		
\end{proof}
\end{sloppypar}

\subsection{Notation}
Given precubical sets $P$ and $Q$ and an elementary cubical dimap $f\colon |Q| \to |P|$, Proposition \ref{elcontuniqueness} permits us to fix the following notation for the objects in condition (2) of the definition of elementary cubical dimaps: We write $R_x = \llbracket 0,{l_1} \rrbracket\otimes \cdots \otimes \llbracket 0,{l_n} \rrbracket$, $\phi_{xi} = \phi_i$, $\sigma_x = \sigma$, and $x_{\flat} = \chi$. Note that here  we silently omit any reference to $f$.

\subsection{Vertex map} It is clear that condition (1) in the definition of elementary cubical dimaps holds for arbitrary cubical dimaps as well. Therefore a cubical dimap $f\colon |Q| \to |P|$ induces a \emph{vertex map} $f_0\colon Q_0 \to P_0$, which sends a vertex $v \in Q_0$ to the unique vertex $w \in P_0$ such that $f([v,()]) = [w,()]$.

\subsection{Paths} 
Let  $f\colon |Q| \to |P|$ be a cubical dimap, and let $\omega \colon \llbracket 0,k\rrbracket \to Q$ be a path.

\begin{prop} \label{fpath}
	 There exist a unique integer $l$, a unique  path $\nu \colon \llbracket 0, l\rrbracket \to P$, and a unique increasing homeomorphism $\phi \colon |\llbracket 0,k\rrbracket| = [0,k] \to  |\llbracket 0,l\rrbracket| = [0,l]$ such that $|\nu| \circ \phi = f \circ |\omega|$. 
\end{prop}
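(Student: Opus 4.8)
The plan is to reduce the statement to the case of an elementary cubical dimap and then handle existence and uniqueness separately, treating uniqueness by a general argument that does not use the directed structure at all.

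I would dispose of \emph{uniqueness} first, since it holds for an arbitrary continuous map $f$ and is the delicate part. Suppose $(l,\nu,\phi)$ and $(l',\nu',\phi')$ both satisfy the required equation, and write $g = f\circ|\omega|\colon [0,k]\to |P|$. Since $|P|$ is a CW complex in which the cell of an edge $y\in P_1$ has characteristic map $|y_{\sharp}|$, the interior of each unit subinterval of $[0,l]$ is mapped by $|\nu|$ homeomorphically onto the corresponding open $1$-cell, which is disjoint from the $0$-skeleton $|P_0|$; hence $|\nu|^{-1}(|P_0|) = \{0,1,\dots,l\}$. Consequently $g^{-1}(|P_0|) = \phi^{-1}(\{0,\dots,l\})$ consists of $l+1$ points, and likewise of $l'+1$ points, so $l = l'$. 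Writing $g^{-1}(|P_0|) = \{t_0 < \cdots < t_l\}$ and using that $\phi,\phi'$ are increasing, I get $\phi(t_i) = i = \phi'(t_i)$. On each open interval $(t_{i-1},t_i)$ the map $g$ takes values in a single open $1$-cell, which forces the $i$th edge of $\nu$ and of $\nu'$ to coincide (distinct cells have disjoint interiors); thus $\nu = \nu'$. Finally, $|\nu|$ is injective on each open $1$-cell, so $|\nu|\circ\phi = g = |\nu|\circ\phi'$ yields $\phi = \phi'$.

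For \emph{existence} I would first treat an elementary cubical dimap $f$. Assuming $k\geq 1$ (the case $k=0$ being trivial), write $\omega = x_{1\sharp}\cdot\cdots\cdot x_{k\sharp}$ for the unique edge sequence with $d_1^0 x_{j+1} = d_1^1 x_j$. Applying condition (2) of the definition to each edge $x_j$ (here $n=1$, so the permutation is trivial) and using the notation fixed after Proposition \ref{elcontuniqueness}, I obtain integers $l_j\geq 1$, morphisms $x_{j\flat}\colon \llbracket 0,l_j\rrbracket \to P$, and increasing homeomorphisms $\phi_{x_j1}\colon [0,1]\to[0,l_j]$ with $f\circ|x_{j\sharp}| = |x_{j\flat}|\circ\phi_{x_j1}$. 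Evaluating this identity at the endpoints $0$ and $1$ shows that $x_{j\flat}(0) = f_0(d_1^0 x_j)$ and $x_{j\flat}(l_j) = f_0(d_1^1 x_j)$, where $f_0$ is the vertex map; since $d_1^1 x_j = d_1^0 x_{j+1}$, the paths $x_{j\flat}$ are composable. I then set $\nu = x_{1\flat}\cdot\cdots\cdot x_{k\flat}$, of length $l = l_1 + \cdots + l_k$, and define $\phi$ piecewise by $\phi(t) = l_1 + \cdots + l_{j-1} + \phi_{x_j1}(t-(j-1))$ for $t\in[j-1,j]$. This is an increasing homeomorphism $[0,k]\to[0,l]$, and a direct interval-by-interval comparison shows $|\nu|\circ\phi = f\circ|\omega|$.

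Passing from an elementary dimap to an arbitrary cubical dimap $f = g_m\circ\cdots\circ g_1$ is then a routine induction on $m$: applying the elementary case to $g_1$ and $\omega$ produces a path and reparametrization, to which I apply the elementary case for $g_2$, and so on; composing the resulting equations and noting that a composite of increasing homeomorphisms is again one yields the desired $\nu$ and $\phi$ for $f$. The main obstacle is the uniqueness step, where the identification of the ``break points'' of the image path via the preimage of the $0$-skeleton must be justified carefully; once the CW-structural facts (interiors of $1$-cells are disjoint from the $0$-skeleton and from one another, and characteristic maps are injective on open cells) are in place, everything else is bookkeeping.
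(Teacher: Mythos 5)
Your proof is correct, and the existence half is essentially identical to the paper's: decompose $\omega$ into edges, apply condition (2) of the definition of elementary cubical dimap to each edge (where the permutation is necessarily trivial), check that the resulting paths $x_{j\flat}$ are composable by evaluating at the endpoints, concatenate, define $\phi$ piecewise, and pass to composites of elementary dimaps. Where you genuinely diverge is in the uniqueness half. The paper does not argue from scratch: given a second triple $(l',\nu',\psi)$ it forms the homeomorphism $\alpha = \psi\circ\phi^{-1}$, observes that $|\nu'|\circ\alpha = |\nu|$, and invokes Lemma \ref{lemuniqueness} (cited from earlier work) to conclude $l'=l$, $\psi=\phi$, $\nu'=\nu$ in one stroke --- the same lemma that drives the uniqueness statement in Proposition \ref{elcontuniqueness}. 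You instead prove uniqueness directly from the CW structure of $|P|$: identifying the break points as $g^{-1}(|P_0|)$, pinning down $l$ and the values $\phi(t_i)$ via monotonicity, recovering the edges of $\nu$ from the open $1$-cells traversed, and using injectivity of $|\nu|$ on open cells to recover $\phi$. Your route is more self-contained and elementary (it only uses standard facts about the cell structure of $|P|$, which the paper records), at the cost of redoing in dimension one what the cited lemma provides in general; the paper's route is shorter here and keeps the uniqueness bookkeeping concentrated in a single reusable lemma. One small point worth making explicit in your version is that monotonicity of $\phi$ and $\phi'$ is genuinely needed to conclude $\phi(t_i)=i=\phi'(t_i)$ (a decreasing reparametrization would reverse the path), whereas the cited lemma dispenses with orientation hypotheses altogether.
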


\begin{proof}
	If $k=0$, we may and must set $l= 0$, $\phi = id$, and $\nu = f_0(\omega(0))_{\sharp}$. 
	
	Suppose that $k>0$. Write $\omega = x_{1\sharp} \cdots x_{k\sharp}$. To show existence, suppose first that $f$ is elementary. For each $i$, let $R_{x_i} = \llbracket 0,l_i\rrbracket$ and write $\phi_i = \phi_{x_i1}$. Since $\sigma_{x_i} = id$, we have $f\circ |x_{i\sharp}| = |x_{i\flat}|\circ \phi_i$. In order to define $\nu$, we will concatenate 
	the paths $x_{i\flat}$. This is possible because for $1 \leq i < k$, $x_{i\flat}(l_i) = x_{(i+1)\flat}(0)$. Indeed,  
	\begin{align*}
	[x_{i\flat}(l_i),()] &= |x_{i\flat}|([l_i,()])
	= |x_{i\flat}|\circ \phi_i([1,()])
	= f\circ |x_{i\sharp}|([1,()])
	= f([x_{i\sharp}(1),()])\\
	&= f([x_{(i+1)\sharp}(0),()]) 
	= f\circ |x_{(i+1)\sharp}|([0,()])
	= |x_{(i+1)\flat}|\circ \phi_{i+1} ([0,()])\\
	&= |x_{(i+1)\flat}|([0,()])
	= [x_{(i+1)\flat}(0),()].
	\end{align*}	
	 Set $l = l_1+ \dots + l_k$ and $\nu = x_{1\flat} \cdots x_{k\flat}$, and consider the increasing homeomorphism $\phi \colon [0,k] \to [0,l]$ given by 
	\[\phi(t) = l_1 + \dots + l_{i-1} + \phi_{i}(t-i+1), \quad t \in [i-1,i], \, i \in \{1, \dots, k\}.\]
	For $i \in \{1, \dots, k\}$ and  $t\in [i-1,i]$, we have 
	\begin{align*}
	|\nu|\circ \phi (t) 
	&= |\nu| (l_1 + \dots + l_{i-1}+\phi_i(t-i+1))\\
	&= |x_{i\flat}|\circ \phi_i(t-i+1)\\ 
	&= f\circ |x_{i\sharp}|(t-i+1)\\ 
	&=  f\circ |\omega|(t).
	\end{align*}
	Hence $|\nu|\circ \phi = f\circ |\omega|$, and existence is shown for elementary cubical dimaps. This implies that $l$, $\nu$, and $\phi$  exist if $f$ is an arbitrary cubical dimap. Consider an integer $l'$, a path $\nu' \colon \llbracket 0, l'\rrbracket \to P$, and an increasing homeomorphism $\psi \colon |\llbracket 0,k\rrbracket| = [0,k] \to  |\llbracket 0,l'\rrbracket| = [0,l']$ such that $|\nu'| \circ \psi = f \circ |\omega|$. Consider the homeomorphism $\alpha = \psi \circ \phi^{-1} \colon |\llbracket 0,{l} \rrbracket| \to |\llbracket 0,{l'} \rrbracket|$. Then $|\nu'| \circ \alpha = |\nu'|\circ \psi \circ \phi^{-1} = f\circ |\omega | \circ \phi^{-1} = |\nu| \circ \phi \circ \phi^ {-1} = |\nu|$. By Lemma \ref{lemuniqueness}, it follows that $l' = l$, $\psi = \phi$, and $\nu' = \nu$.
\end{proof}

The path $\nu$ of Proposition \ref{fpath} will be denoted by $f^{\mathbb I}(\omega)$. We remark that if $k= 0$, $f^{\mathbb I}(\omega)$ is the path in $P$ of length $0$ given by $f^{\mathbb I}(\omega)(0) = f_0(\omega(0))$. Note also that if $f$ is the geometric realization of a morphism of precubical sets $h\colon Q \to P$, then $f^{\mathbb I}(\omega) = h\circ \omega$. Note finally that if $g\colon |P| \to |K|$ is a second cubical dimap, then $(g\circ f)^{\mathbb I}(\omega) = g^{\mathbb I} \circ f^{\mathbb I}(\omega)$.

\subsection{Cubical dimaps of HDAs}	
	An \emph{elementary cubical dimap} from an $M\mbox{-}$HDA $\B$ to an $M\mbox{-}$HDA $\A$ is an elementary cubical dimap of precubical sets  $f\colon |\B| \to |\A|$  such that $f_0(I_{\B}) \subseteq I_{\A}$, $f_0(F_{\B}) \subseteq F_{\A}$, and  $\overline{\lambda}_{\A}\circ f^{\mathbb I} = \overline{\lambda}_{\B}$. A \emph{cubical dimap} of $M\mbox{-}$HDAs is a finite composite  of elementary cubical dimaps. It is clear that the conditions $f_0(I_{\B}) \subseteq I_{\A}$, $f_0(F_{\B}) \subseteq F_{\A}$, and  $\overline{\lambda}_{\A}\circ f^{\mathbb I} = \overline{\lambda}_{\B}$ hold for arbitrary cubical dimaps and not only for elementary ones.

\section{Naturality} \label{secNat}

It is not difficult to see that labeled homology is natural with respect to morphisms of HDAs. Here we establish that it also is natural with respect to cubical dimaps.

\subsection{The chain map induced by a cellular map}

Given precubical sets $P$ and $Q$ and a cellular map $f\colon |Q| \to |P|$, let $f_*\colon C_*(Q) \to C_*(P)$ be the unique chain map making the following diagram commute:
\[
\xymatrix{
	C_*(Q) \ar^{f_*}[r] \ar^{\cong}_{\cell }[d] & C_*(P) \ar_{\cong}^{\cell}[d] \\
	C^{CW}_*(|Q|) \ar_{f_*}[r]  & C^{CW}_*(|P|)
}
\]
Note that if $f$ is the geometric realization of a morphism of precubical sets $g\colon Q \to P$, then $f_* = g_*\colon C_*(Q) \to C_*(P)$. In Section \ref{fstar} we give an explicit description of $f_*$ for elementary cubical dimaps.

\subsection{Naturality of \texorpdfstring{$\zeta$}{}} Recall from Section \ref{cross} that given precubical sets $P$ and $Q$, $\zeta$ 
is the isomorphism of chain complexes \[ C_*(P)\otimes C_*(Q) \to C_*(P\otimes Q)\] given by \[x\otimes y \mapsto (x,y), \quad x\in P,\, y\in Q.\] 

\begin{lem} \label{zetanat}
The isomorphism $\zeta$ is natural with respect to cellular maps, i.e., for precubical sets $P$, $P'$, $Q$, and $Q'$, cellular maps $f\colon |P| \to |P'|$ and $g\colon |Q| \to |Q'|$, and the cellular map $f\times g \colon |P\otimes Q| = |P|\times |Q| \to |P'\otimes Q'| = |P'|\times |Q'|$, the following diagram of chain complexes is commutative:
\[
\xymatrix{
	C_*(P)\otimes C_*(Q) \ar^(0.55){\zeta}[r] \ar_{f_*\otimes g_*}[d] & C_*(P\otimes Q) \ar^{(f\times g)_*}[d] \\
	C_*(P')\otimes C_*(Q') \ar_(0.55){\zeta}[r]  & C_*(P'\otimes Q')
}
\] 
\end{lem}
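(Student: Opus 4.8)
The plan is to transport the entire square along the natural isomorphism $\cell$ of Theorem \ref{cubCW} and thereby reduce the claim to the naturality of the cellular cross product. The point is that, by definition, the induced chain maps $f_*$, $g_*$, and $(f\times g)_*$ on cubical chains are exactly the transports along $\cell$ of the cellular chain maps induced by $f$, $g$, and $f\times g$; I will write the latter as $f_*^{CW}$, $g_*^{CW}$, $(f\times g)_*^{CW}$. Introduce, for each pair of precubical sets, the chain isomorphism
\[
\hat{\zeta} = \cell\circ \zeta\circ(\cell\otimes\cell)^{-1}\colon C^{CW}_*(|P|)\otimes C^{CW}_*(|Q|)\to C^{CW}_*(|P|\times|Q|),
\]
using the identification $|P\otimes Q| = |P|\times|Q|$ afforded by $\psi_{P,Q}$. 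Applying $\cell$ to both composites of the lemma and feeding in the three defining squares $\cell\circ f_* = f_*^{CW}\circ\cell$, and likewise for $g$ and $f\times g$, one finds that the left-hand side becomes $(f\times g)_*^{CW}\circ\hat{\zeta}\circ(\cell\otimes\cell)$ and the right-hand side becomes $\hat{\zeta}\circ(f_*^{CW}\otimes g_*^{CW})\circ(\cell\otimes\cell)$. Since $\cell\otimes\cell$ is an isomorphism, the square of the lemma commutes if and only if $(f\times g)_*^{CW}\circ\hat{\zeta} = \hat{\zeta}\circ(f_*^{CW}\otimes g_*^{CW})$, that is, if and only if the family $\hat{\zeta}$ is natural with respect to cellular maps.

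Next I would identify $\hat{\zeta}$ with the classical cellular cross product. By the universal property defining $x_{\sharp}$ one has $(x,y)_{\sharp} = x_{\sharp}\otimes y_{\sharp}$ for $x\in P_p$ and $y\in Q_q$, and comonoidality of the geometric realization gives $|(x,y)_{\sharp}| = |x_{\sharp}|\times|y_{\sharp}|$ under the identification $\psi_{P,Q}$. Reading off the explicit description of $\cell$ in Theorem \ref{cubCW}, this means that $\hat{\zeta}$ sends the cellular generator $\cell(x)\otimes\cell(y)$, carried by the cells $|x_{\sharp}|$ and $|y_{\sharp}|$, to the cellular generator $\cell(x,y)$ carried by the product cell $|x_{\sharp}|\times|y_{\sharp}|$. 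This is precisely the cross product
\[
C^{CW}_*(|P|)\otimes C^{CW}_*(|Q|)\to C^{CW}_*(|P|\times|Q|)
\]
attached to the product CW structure on $|P|\times|Q|$.

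Finally, the naturality of the cellular cross product with respect to cellular maps is classical: it is assembled from the excision isomorphisms $H_n((X\times Y)_n,(X\times Y)_{n-1})\cong\bigoplus_{p+q=n}H_p(X_p,X_{p-1})\otimes H_q(Y_q,Y_{q-1})$, which are natural in singular homology, while the cellular induced maps are the maps $H_n(-)$ on the relevant relative groups (see e.g.\ \cite[VI.3]{Massey}). Invoking this naturality for $\hat{\zeta}$ closes the argument. The main obstacle, and the reason the detour through $\cell$ is unavoidable, is that for a general cellular map $f$ the induced map $f_*$ on cubical chains carries no combinatorial formula on generators, so naturality cannot be checked by a direct computation with the elements $x\otimes y$; one must pass to a model in which the induced maps have their defining meaning. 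Moreover this must be the cellular model rather than the singular one of Theorem \ref{natquism}, because in the singular picture $\sing\circ f_*$ and $S_*(f)\circ\sing$ agree only up to chain homotopy, which would give naturality merely in homology, whereas the lemma asserts commutativity of chain maps on the nose.
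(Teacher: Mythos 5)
Your proposal is correct and follows essentially the same route as the paper: both transport the square along the isomorphism $\cell$ of Theorem \ref{cubCW}, identify the resulting map $C^{CW}_*(|P|)\otimes C^{CW}_*(|Q|)\to C^{CW}_*(|P|\times|Q|)$ (your $\hat{\zeta}$, the paper's $\mu$) with the cellular cross product built from the Eilenberg--Zilber map via the identity $|(x,y)_{\sharp}| = |x_{\sharp}|\times|y_{\sharp}|$, and then conclude by the naturality of that cross product with respect to cellular maps. Your closing remark on why one must use the cellular rather than the singular model to get commutativity on the nose is a correct and worthwhile observation.
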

\begin{sloppypar}
\begin{proof}
Given precubical sets $P$ and $Q$, let $\mu$ denote the unique chain map ${C^{CW}_*(|P|) \otimes C^{CW}_*(|Q|)} \to C^{CW}_*(|P| \times |Q|)$ making the following diagram commute:
\[
\xymatrix{
	C_*(P) \otimes C_*(Q) \ar^(0.55){\zeta}[r] \ar^{\cong}_{\cell \otimes \cell }[d] & C_*(P\otimes Q) \ar_{\cong}^{\psi_{P,Q *}\circ \cell} [d] \\
	C^{CW}_*(|P|) \otimes C^{CW}_*(|Q|)  \ar^(0.55){\mu}[r]  & C^{CW}_*(|P| \times |Q|)
}
\]
A straightforward verification shows that $\mu$ coincides on $C^{CW}_p(|P|) \otimes C^{CW}_q(|Q|)$ with the composite
\begin{align*}
	\MoveEqLeft{H_p(|P_{\leq p}|,|P_{<p}|)\otimes  H_q(|Q_{\leq q}|,|Q_{<q}|)}\\ &\xrightarrow{\times} H_{p+q}(|P_{\leq p}|\times |Q_{\leq q}|,|P_{\leq p}|\times |Q_{< q}|\cup |P_{< p}|\times |Q_{\leq q}|)\\ &\xrightarrow{j_*}  H_{p+q}((|P|\times |Q|)_{p+q},(|P|\times |Q|)_{p+q-1})\\
	&= C^{CW}_{p+q}(|P| \times |Q|)	
\end{align*}
where $j$ is the inclusion and the relative cross product $\times$ is defined using the map
\[{S_*(|P_{\leq p}|,|P_{<p}|)\otimes  S_*(|Q_{\leq q}|,|Q_{<q}|)}\to S_*(|P_{\leq p}|\times |Q_{\leq q}|,|P_{\leq p}|\times |Q_{< q}|\cup |P_{< p}|\times |Q_{\leq q}|)\]
induced by the Eilenberg-Zilber map (see Section \ref{cross}). It follows that $\mu$ is natural with respect to cellular maps, and this implies that $\zeta$ is natural with respect to cellular maps.
\end{proof}

\end{sloppypar}

It is clear that $\zeta$ is associative and can be iterated. Given precubical sets $P_1, \dots, P_n$, the chain isomorphism $C_*(P_1) \otimes \dots \otimes C_*(P_n) \to C_*(P_1 \otimes \dots \otimes P_n)$ will also be denoted by $\zeta$. It follows from Lemma \ref{zetanat} that this iterated $\zeta$ is natural with respect to cellular maps. 

\subsection{Interchange map of a square}
Consider the interchange map  
\[t\colon |\llbracket 0,1\rrbracket^{\otimes 2}| = |\llbracket 0,1\rrbracket|^{2} = [0,1]^2 \to |\llbracket 0,1\rrbracket^{\otimes 2}| = |\llbracket 0,1\rrbracket|^{2} = [0,1]^2,\quad  (x,y) \mapsto (y,x)\] 
and the induced chain map
\[t_*\colon C_*(\llbracket 0,1\rrbracket^{\otimes 2}) \to C_*(\llbracket 0,1\rrbracket^{\otimes 2}).\]

\begin{lem} \label{zetacom}
$t_*(\iota_2) = -\iota_2$.
\end{lem}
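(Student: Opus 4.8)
The plan is to exploit that $C_2(\llbracket 0,1\rrbracket^{\otimes 2})$ is free of rank one, generated by the unique $2$-cube $\iota_2$, and that $t_*$ is a chain map. Since $t$ is the permutation map for the transposition $(1\;2)$, it is a cubical dimap (Example \ref{dimapexs}(iii)) and hence cellular, so $t_*\colon C_*(\llbracket 0,1\rrbracket^{\otimes 2}) \to C_*(\llbracket 0,1\rrbracket^{\otimes 2})$ is defined and commutes with $d$. Writing $t_*(\iota_2) = c\,\iota_2$ for a scalar $c$, I would recover $c$ from the identity $d(t_*\iota_2) = t_*(d\iota_2)$: the boundary $d\iota_2$ is a nonzero combination of the four distinct edges of the square, so $d$ is injective on the rank-one module $C_2$, and it therefore suffices to prove that $t_*(d\iota_2) = -d\iota_2$.

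The key computation is the action of $t_*$ on the four edges. Here I would use the explicit form of the isomorphism $\cell$ from Theorem \ref{cubCW}, namely $\cell(x) = [|x_{\sharp}|]$, together with the defining commutative square for $t_*$: on cellular chains $t_*$ sends the class $[|x_{\sharp}|]$ to $[t\circ |x_{\sharp}|]$. For each edge $x$ of the square one checks that $t\circ |x_{\sharp}|$ is literally the characteristic map $|y_{\sharp}|$ of another edge $y$: under the identification $|\llbracket 0,1\rrbracket^{\otimes 2}| = [0,1]^2$ one has $|([0,1],0)_{\sharp}|(s) = (s,0)$, whence $t\circ |([0,1],0)_{\sharp}|(s) = (0,s) = |(0,[0,1])_{\sharp}|(s)$, and similarly in the three remaining cases. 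Consequently $t_*$ interchanges the bottom edge $([0,1],0)$ with the left edge $(0,[0,1])$ and the top edge $([0,1],1)$ with the right edge $(1,[0,1])$, each with coefficient $+1$.

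Finally I would substitute into the boundary. From the cubical boundary formula, $d\iota_2 = -(0,[0,1]) + (1,[0,1]) + ([0,1],0) - ([0,1],1)$; applying the edge map just described swaps the two horizontal with the two vertical edges and yields exactly $-d\iota_2$. Since $t_*$ is a chain map this gives $d(t_*\iota_2) = -d\iota_2 = d(-\iota_2)$, and injectivity of $d$ on $C_2$ forces $t_*(\iota_2) = -\iota_2$. The only genuinely delicate point is the second step: because the explicit description of $f_*$ on cells is only developed afterwards, I must derive the edge map directly from the definition of $t_*$ through $\cell$, and the reason this is painless is the exact equality $t\circ|x_{\sharp}| = |y_{\sharp}|$ of characteristic maps, which leaves no room for an unexpected sign.
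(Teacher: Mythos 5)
Your proposal is correct and follows essentially the same route as the paper: both arguments reduce to the injectivity of $d$ on the rank-one module $C_2(\llbracket 0,1\rrbracket^{\otimes 2})$, compute the action of $t_*$ on the four edges by observing that $t$ composed with the characteristic map of one edge is exactly the characteristic map of the transposed edge (the paper phrases this as $t\circ\delta^k_i=\delta^k_{3-i}$), and substitute into the cubical boundary formula to get $dt_*(\iota_2)=-d\iota_2$. The only cosmetic difference is that you set $t_*(\iota_2)=c\,\iota_2$ and solve for $c$, while the paper works directly in $C^{CW}_*$ via the $\cell$ isomorphism; the content is identical.
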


\begin{proof}
Since the boundary operator 
\[d\colon C_2(\llbracket 0,1\rrbracket^{\otimes 2}) \to C_1(\llbracket 0,1\rrbracket^{\otimes 2})\]
is injective, it is enough to show that $dt_*(\iota_2) = -d\iota_2$. In view of the commutative diagram 
\[
\xymatrix{
	C_*(\llbracket 0,1\rrbracket^{\otimes 2}) \ar^{t_*}[r] \ar^{\cong}_{\cell }[d] & C_*(\llbracket 0,1\rrbracket^{\otimes 2}) \ar_{\cong}^{\cell}[d] \\
	C^{CW}_*(|\llbracket 0,1\rrbracket^{\otimes 2}|) \ar_{t_*}[r]  & C^{CW}_*(|\llbracket 0,1\rrbracket^{\otimes 2}|),
}
\]	
it suffices to show that $dt_*(\cell(\iota_2)) = -d\cell(\iota_2)$. We have 
\[
d\cell(\iota_2) = \sum \limits_{i=1}^2 (-1)^i( \cell (d^0_i\iota_2) -  \cell (d^1_i\iota_2)) 
\]
and hence
\[
dt_*(\cell(\iota_2)) = \sum \limits_{i=1}^2 (-1)^i( t_*(\cell (d^0_i\iota_2)) -  t_*(\cell (d^1_i\iota_2))). 
\]
Identifying $ |(d^k_i\iota_2)_{\sharp}| = \delta^k_i\colon |\llbracket0,1\rrbracket| = [0,1] \to |\llbracket0,1\rrbracket^{\otimes 2}| = [0,1]^2$, we compute 
\[
t_*(\cell(d^k_i\iota_2))
= t_*([\delta^k_i])
= [t\circ \delta^k_i]
= [\delta^k_{3-i}]
= \cell(d^k_{3-i}\iota_2).
\]
It follows that 
\begin{align*}
dt_*(\cell(\iota_2)) 
&= \sum \limits_{i=1}^2 (-1)^i( t_*(\cell (d^0_i\iota_2)) -  t_*(\cell (d^1_i\iota_2)))\\
&= \sum \limits_{i=1}^2 (-1)^i( \cell (d^0_{3-i}\iota_2) -  \cell (d^1_{3-i}\iota_2))\\
&= -\sum \limits_{i=1}^2 (-1)^i( \cell (d^0_i\iota_2) -  \cell (d^1_i\iota_2))\\
&= -d\cell(\iota_2). \qedhere 
\end{align*} 
\end{proof}

\subsection{The chain map induced by an elementary cubical dimap} \label{fstar} Let $f$ be an elementary cubical dimap from a precubical set $Q$ to a precubical set $P$.

\begin{prop} \label{chainmap}
The chain map $f_*\colon C_*(Q) \to C_*(P)$ is given by 
\[
f_*(x) = \left\{ \begin{array}{ll}
f_0(x), & x \in Q_0,\\
\sgn(\sigma_x)\sum_{y \in (R_x)_n}x_{\flat}(y), &  x\in Q_n, n > 0.
\end{array}\right.
\]
\end{prop}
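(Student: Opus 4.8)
The plan is to compute $f_*$ directly from its defining property, namely that it is the chain map corresponding under the isomorphism $\cell$ to the cellular chain map induced by $f$ on cellular chain complexes. The strategy splits into two cases according to the dimension of $x$, with the interesting work concentrated in the case $x \in Q_n$ for $n > 0$.

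The vertex case $x \in Q_0$ is immediate: $\cell(x) = [|x_\sharp|]$ is the class of the corresponding $0$-cell, $f$ sends it to the $0$-cell at $f([x,()]) = [f_0(x),()]$, and reading this back through $\cell$ gives $f_*(x) = f_0(x)$. For the case $n > 0$, I would start from $\cell(x) = [|x_\sharp|]$, viewed as an element of $C^{CW}_n(|Q|) = H_n(|Q_{\le n}|, |Q_{<n}|)$, and apply the cellular map induced by $f$. The key input is condition (2) of the definition of an elementary cubical dimap together with the notation fixed after Proposition \ref{elcontuniqueness}: we have $f \circ |x_\sharp| = |x_\flat| \circ (\phi_{x1} \times \cdots \times \phi_{xn}) \circ t_{\sigma_x}$, where $x_\flat \colon R_x = \llbracket 0,l_1 \rrbracket \otimes \cdots \otimes \llbracket 0,l_n \rrbracket \to P$. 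The factor $(\phi_{x1}\times \cdots \times \phi_{xn})$ is a product of increasing homeomorphisms and hence a degree-$+1$ orientation-preserving reparametrization, so on cellular homology it contributes trivially; it sends the fundamental class of the $n$-cube to the fundamental class of $R_x$, which in $C^{CW}_n(|R_x|) \cong C_n(R_x)$ is $\sum_{y \in (R_x)_n} y$. Composing with $|x_\flat|$ (the realization of a precubical morphism, whose induced chain map is $(x_\flat)_*$ and which, by the remark after the definition of $f_*$, agrees with applying $x_\flat$ cube-by-cube) yields $\sum_{y \in (R_x)_n} x_\flat(y)$.

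The one genuinely delicate point is the sign $\sgn(\sigma_x)$, which is exactly where Lemma \ref{zetacom} enters. The permutation map $t_{\sigma_x}$ permutes the coordinates of $[0,1]^n = |\llbracket 0,1\rrbracket^{\otimes n}|$, and I must determine what it does on the top cellular chain $\iota_n$. The cleanest route is to factor $t_{\sigma_x}$ as a composite of transpositions of adjacent coordinates and to invoke the naturality of $\zeta$ with respect to cellular maps (Lemma \ref{zetanat}, in its iterated form) so that an adjacent transposition acting on $|\llbracket 0,1\rrbracket^{\otimes n}|$ reduces, via $\zeta$, to the interchange map $t$ acting on a single tensor factor $|\llbracket 0,1\rrbracket^{\otimes 2}|$ with the remaining factors fixed. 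Lemma \ref{zetacom} gives $t_*(\iota_2) = -\iota_2$ for that single factor, hence each adjacent transposition introduces a sign $-1$ on $\iota_n$, and multiplying these together yields precisely $(t_{\sigma_x})_*(\iota_n) = \sgn(\sigma_x)\,\iota_n$.

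Assembling the pieces, I would write the induced map as the composite of the chain map of $t_{\sigma_x}$ (contributing $\sgn(\sigma_x)$ on the fundamental class), the reparametrization $\phi_{x1}\times\cdots\times\phi_{xn}$ (contributing $+1$ and turning $\iota_n$ into $\sum_{y\in (R_x)_n} y$), and finally $(x_\flat)_*$, to obtain $f_*(x) = \sgn(\sigma_x) \sum_{y \in (R_x)_n} x_\flat(y)$, as claimed. The \textbf{main obstacle} is carrying out the sign bookkeeping rigorously, i.e.\ establishing $(t_{\sigma_x})_*(\iota_n) = \sgn(\sigma_x)\iota_n$ from the single-factor computation of Lemma \ref{zetacom}; everything else is a matter of tracking fundamental classes through orientation-preserving homeomorphisms and through the realization of a precubical morphism, which is routine given Theorem \ref{cubCW} and the naturality statements already in hand.
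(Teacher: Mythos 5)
Your proposal is correct and follows essentially the same route as the paper's proof: the vertex case is handled identically, and for $n>0$ the paper likewise factors $f\circ|x_\sharp|$ as $|x_\flat|\circ(\phi_{x1}\times\cdots\times\phi_{xn})\circ t_{\sigma_x}$, derives $(t_{\sigma_x})_*(\iota_n)=\sgn(\sigma_x)\iota_n$ from Lemma \ref{zetacom} via adjacent transpositions and the naturality of $\zeta$, and computes $(\phi_{x1}\times\cdots\times\phi_{xn})_*(\iota_n)=\sum_{y\in(R_x)_n}y$ before applying $x_{\flat*}$. The only difference is that the paper spells out the reparametrization step you call routine, by first computing $\phi_{xi*}(\iota_1)=\sum_{j_i}[j_i,j_i+1]$ and then assembling the product through the iterated $\zeta$.
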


\begin{proof}
Let $x\in Q_0$. Since ${f\circ |x_{\sharp}|= |f_0(x)_{\sharp}|}$, the map  $f_*\colon C^{CW}_0(|Q|) \to C^{CW}_0(|P|)$  sends $\cell(x)$ to  $\cell(f_0(x))$. Hence $f_*\colon C_0(Q) \to C_0(P)$ sends $x$ to $f_0(x)$.

Let $n>0$ and $x \in Q_n$. Consider the following commutative diagram:
\[
\xymatrix{
C_*(\llbracket 0,1\rrbracket^{\otimes n}) \ar^(.55){t_{\sigma_x*}}[rr] \ar_{x_{\sharp*}}[d] & & C_*(\llbracket 0,1\rrbracket^{\otimes n}) \ar^(.55){(\phi_{x1} \times \cdots \times \phi_{xn})_*}[rr] & & C_*(R_x) \ar^{x_{\flat*}}[d] \\
C_*(Q) \ar_{f_*}[rrrr]& & & & C_*(P)
}
\]
We have $f_*(x) = f_*\circ x_{\sharp*}(\iota_n) = x_{\flat*} \circ (\phi_{x1} \times \cdots \times \phi_{xn})_* (t_{\sigma_x*}(\iota_n))$. We show first that $t_{\sigma_x*}(\iota_n) = \sgn(\sigma_x)\iota_n$. If $n=1$, there is nothing to show. If $n>1$, write $\sigma_x = {\sigma_1 \circ \dots \circ \sigma_m}$ where each $\sigma_j$ is a transposition of the form $(i\;\; i+1)$. Then $t_{\sigma_x} = t_{\sigma_m} \circ \dots \circ t_{\sigma_1}$ and $\sgn (\sigma_x) = (-1)^m$. It is thus enough to show that $t_{(i\;\; i+1) *}(\iota_n) = -\iota_n$. Since 
\[t_{(i\;\; i+1)} = \underbracket[0.5pt]{id \times \cdots \times id}_{\text{$i-1$ times}}\times t \times \underbracket[0.5pt]{id \times \cdots \times id}_{\text{$n-i-1$ times}}, \] Lemma \ref{zetanat} implies that the following diagram is commutative:
\[
\xymatrix{
	C_*(\llbracket 0,1 \rrbracket^{\otimes i-1})\otimes 	C_*(\llbracket 0,1 \rrbracket^{\otimes 2}) \otimes 	C_*(\llbracket 0,1 \rrbracket^{\otimes n-i-1}) \ar^(0.73){\zeta}[r] \ar_{id\otimes t_* \otimes id}[d] & C_*(\llbracket 0,1 \rrbracket^{\otimes n}) \ar^{t_{(i\;\;i+1) *}}[d] \\
	C_*(\llbracket 0,1 \rrbracket^{\otimes i-1})\otimes 	C_*(\llbracket 0,1 \rrbracket^{\otimes 2}) \otimes 	C_*(\llbracket 0,1 \rrbracket^{\otimes n-i-1}) \ar_(0.73){\zeta}[r]  & C_*(\llbracket 0,1 \rrbracket^{\otimes n})
}
\] 
Thus, by Lemma \ref{zetacom},  
\begin{align*}
t_{(i\;\; i+1) *}(\iota_n) &= t_{(i\;\; i+1) *} \circ \zeta (\iota_{i-1} \otimes \iota_2 \otimes \iota_{n-i-1})\\
&= \zeta \circ (id\otimes t_* \otimes id) (\iota_{i-1} \otimes \iota_2 \otimes \iota_{n-i-1})\\
&= \zeta (\iota_{i-1} \otimes (-\iota_2) \otimes \iota_{n-i-1})\\
&= - \iota_n.
\end{align*}

We next compute $(\phi_{x1} \times \cdots \times \phi_{xn})_*(\iota_n)$. Suppose that $R_x = \llbracket 0,{l_1} \rrbracket\otimes \cdots \otimes \llbracket 0,{l_n} \rrbracket$. By Lemma \ref{zetanat}, we have the following commutative diagram:
\[
\xymatrix{
C_*(\llbracket 0,1\rrbracket)^{\otimes n} \ar^(.4){\phi_{x1*} \otimes \cdots \otimes \phi_{xn*}}[rrr] \ar_{\zeta}[d]& & & C_*(\llbracket 0,{l_1} \rrbracket) \otimes \cdots \otimes C_*(\llbracket 0,{l_n} \rrbracket) \ar^{\zeta}[d] \\
C_*(\llbracket 0,1\rrbracket^{\otimes n}) \ar_(0.4){(\phi_{x1} \times \cdots \times \phi_{xn})_*}[rrr]  & & & C_*(\llbracket 0,{l_1} \rrbracket \otimes \cdots \otimes \llbracket 0,{l_n} \rrbracket)
}
\]
One easily computes that $\phi_{xi*} \colon C_*(\llbracket 0,1 \rrbracket) \to C_*(\llbracket0,l_i\rrbracket)$ sends $\iota_1$ to $\sum_{j_i=0}^{l_i-1}[j_i,j_i+1]$. Hence 
\begin{align*}
(\phi_{x1} \times \cdots \times \phi_{xn})_*(\iota_n) &= (\phi_{x1} \times \cdots \times \phi_{xn})_*\circ \zeta (\iota_1 \otimes \dots \otimes \iota_1)\\
&= \zeta \circ (\phi_{x1*} \otimes \cdots \otimes \phi_{xn*})(\iota_1 \otimes \dots \otimes \iota_1)\\
&= \zeta (\phi_{x1*}(\iota_1) \otimes \cdots \otimes \phi_{xn*}(\iota_1))\\
&= \zeta \left(\left(\sum_{j_1=0}^{l_1-1}[j_1,j_1+1]\right) \otimes \cdots \otimes \left(\sum_{j_n=0}^{l_n-1}[j_n,j_n+1]\right)\right )\\ 
&= \sum_{j_1=0}^{l_1-1}\dots \sum_{j_n=0}^{l_n-1} \zeta ([j_1,j_1+1]\otimes  \dots \otimes [j_n,j_n+1])\\
&= \sum_{j_1=0}^{l_1-1}\dots \sum_{j_n=0}^{l_n-1} ([j_1,j_1+1], \dots,[j_n,j_n+1])\\ 
&= \sum_{y \in (R_x)_n}y.
\end{align*}
Finally,   
$f_*(x) = x_{\flat*} \circ (\phi_{x1} \times \cdots \times \phi_{xn})_* (t_{\sigma_x*}(\iota_n)) = \sgn(\sigma_x)\sum_{y \in (R_x)_n}x_{\flat}(y)$.
\end{proof}

\subsection{Naturality of labeled homology}

We are now ready to establish the naturality of labeled homology with respect to cubical dimaps.

\begin{theor} \label{natweak}
Let $f$ be a cubical dimap from a $\Sigma^*\mbox{-}$HDA $\B$ to a $\Sigma^*\mbox{-}$HDA $\A$. Then $\mathfrak{l}_{B}^n = \mathfrak{l}_{\A}^n\circ f_*$ for all $n \geq 0$. Consequently, $\ell_{\B} = \ell_{\A}\circ f_*$ in all degrees.
\end{theor}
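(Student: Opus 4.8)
The plan is to first reduce to the case of an elementary cubical dimap and then to verify the cochain identity on basis elements. Since $f$ is by definition a finite composite of elementary cubical dimaps of HDAs and the induced chain map $f_*$ is functorial (it is defined through the natural isomorphism $\cell$ of Theorem \ref{cubCW} together with the functorial cellular chain complex), an induction on the number of factors reduces the claim $\mathfrak{l}_{\B}^n = \mathfrak{l}_{\A}^n\circ f_*$ to the case where $f$ is elementary. The homology statement will then follow immediately: for any cycle $z$ one has $\ell_{\A}(f_*[z]) = \mathfrak{l}_{\A}^n(f_*(z)) = \mathfrak{l}_{\B}^n(z) = \ell_{\B}([z])$, using that $f_*(z)$ is again a cycle and the definition of the labeling homomorphisms.

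So I would assume $f$ elementary and fix $x\in \B_n$. The case $n=0$ is immediate, both sides being $1$. For $n>0$, Proposition \ref{chainmap} gives $f_*(x) = \sgn(\sigma_x)\sum_{y\in (R_x)_n}x_{\flat}(y)$, and I must evaluate $\mathfrak{l}_{\A}^n$ on each $x_{\flat}(y)$. Writing $y = ([j_1,j_1+1],\dots,[j_n,j_n+1])$ with $0\leq j_i<l_i$ and using that $x_{\flat}$ is a morphism of precubical sets (hence commutes with the operators $e^0_i$), Lemma \ref{hatlambdaedges} yields $\mathfrak{l}_{\A}^n(x_{\flat}(y)) = \bigwedge_{i=1}^n \mathfrak{l}_{\A}^1(x_{\flat}(e^0_iy))$. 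The edge $e^0_iy$ lies in the $i$-th interval direction of $R_x$ with all other coordinates maximal, so Lemma \ref{paralleledges} shows that $\mathfrak{l}_{\A}^1(x_{\flat}(e^0_iy))$ depends only on $j_i$ and equals $\mathfrak{l}_{\A}^1(x_{\flat}(0,\dots,[j_i,j_i+1],\dots,0))$. Distributing the wedge over the sum by multilinearity then gives
\[
\sum_{y\in (R_x)_n}\mathfrak{l}_{\A}^n(x_{\flat}(y)) = \bigwedge_{i=1}^n S_i, \qquad S_i := \sum_{j=0}^{l_i-1}\mathfrak{l}_{\A}^1(x_{\flat}(0,\dots,[j,j+1],\dots,0)).
\]

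The decisive step is to identify each $S_i$ with a labeling cochain of $\B$. Tracing the defining identity $f\circ |x_{\sharp}| = |x_{\flat}|\circ(\phi_{x1}\times\cdots\times\phi_{xn})\circ t_{\sigma_x}$ along a coordinate axis, one sees that the direction-$s$ edge $(e^1_sx)_{\sharp}$ of $x$ is carried by $f$ to the subdivided direction-$r$ path of $x_{\flat}$ with $r=\sigma_x^{-1}(s)$; this is exactly the path $f^{\mathbb I}((e^1_sx)_{\sharp})$ of Proposition \ref{fpath}. Applying the canonical homomorphism from $\Sigma^*$ to the free module on $\Sigma$ (which, as observed after the definition of the labeling cochain, sends $\lambda$-values to $\mathfrak{l}^1$-values) to the label-preservation identity $\overline{\lambda}_{\A}\circ f^{\mathbb I} = \overline{\lambda}_{\B}$ turns the product of labels along that path into the sum $S_r$, whence $S_r = \mathfrak{l}_{\B}^1(e^1_{\sigma_x(r)}x) = \mathfrak{l}_{\B}^1(e^0_{\sigma_x(r)}x)$ by Lemma \ref{hatlambdae}. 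Therefore $\bigwedge_iS_i = \bigwedge_i\mathfrak{l}_{\B}^1(e^0_{\sigma_x(i)}x) = \sgn(\sigma_x)\bigwedge_i\mathfrak{l}_{\B}^1(e^0_ix) = \sgn(\sigma_x)\mathfrak{l}_{\B}^n(x)$ by Lemma \ref{hatlambdaedges}, and multiplying by the $\sgn(\sigma_x)$ coming from Proposition \ref{chainmap} gives $\mathfrak{l}_{\A}^n(f_*(x)) = \sgn(\sigma_x)^2\,\mathfrak{l}_{\B}^n(x) = \mathfrak{l}_{\B}^n(x)$.

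The main obstacle I anticipate is the bookkeeping in this last paragraph: one must carefully match the coordinate directions of the standard cube to those of $R_x$ through the permutation $\sigma_x$ and the homeomorphisms $\phi_{xi}$, and then check that the permutation sign absorbed into $\bigwedge_iS_i$ is precisely the $\sgn(\sigma_x)$ appearing in the formula for $f_*$, so that the two signs cancel. The use of Lemma \ref{hatlambdae}, which makes the identification of $S_r$ insensitive to whether one works with the front edge $e^0$ or the back edge $e^1$, is what keeps this matching clean and frees the argument from tracking basepoints.
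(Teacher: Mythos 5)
Your proposal is correct and follows essentially the same route as the paper's proof: reduce to an elementary dimap, use Proposition \ref{chainmap} for $f_*(x)$, factor $\sum_{y}\mathfrak{l}_{\A}^n(x_{\flat}(y))$ into a wedge of one-dimensional sums via Lemmas \ref{hatlambdaedges} and \ref{paralleledges}, identify each such sum with $\mathfrak{l}_{\B}^1$ of an edge of $x$ by tracing subdivided edge-paths through $\overline{\lambda}_{\A}\circ f^{\mathbb I}=\overline{\lambda}_{\B}$, and cancel the permutation signs. The only cosmetic difference is that you read the computation from $\mathfrak{l}_{\A}^n(f_*(x))$ toward $\mathfrak{l}_{\B}^n(x)$ and normalize parallel edges at the initial rather than the final corner, both of which are immaterial by Lemma \ref{hatlambdae}.
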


\begin{proof}
We may suppose that $f$ is elementary. For any vertex $v\in Q_0$, $\mathfrak{l}_{\B}^0(v) = 1 = \mathfrak{l}_{\A}^0 (f_0(v)) = \mathfrak{l}_{\A}^0 \circ f_*(v)$. Suppose that $n > 0$. Consider an element $x\in Q_n$, and let $R_x = \llbracket 0,{l_1} \rrbracket \otimes \cdots \otimes \llbracket 0,{l_n} \rrbracket$. For $i \in \{1, \dots, n\}$ and $j_i \in \{0, \dots, l_i-1\}$, consider the edge
\[
b_i^{j_i} = (l_1, \dots ,l_{i-1}, [j_i,j_i+1],l_{i+1}, \dots,l_n)
\] 
of $R_x$. We show first that the following diagram is commutative:
\[
\xymatrix{
|\llbracket 0,1\rrbracket| \ar^(0.5){\phi_{xi}}[rrrr] \ar_{|(e^0_{\sigma_x(i)}\iota_n)_{\sharp}|}[d] & & & & |\llbracket 0,{l_i} \rrbracket| \ar^{|b^{0}_{i\sharp} \cdots b^{l_i-1}_{i\sharp}|}[d] \\
|\llbracket 0,1\rrbracket^{\otimes n}| \ar_{t_{\sigma_x}}[rr]  & & |\llbracket 0,1\rrbracket^{\otimes n}| \ar_(0.4){\phi_{x1} \times \cdots \times \phi_{xn}}[rr] & & |\llbracket 0,{l_1} \rrbracket \otimes \cdots \otimes \llbracket 0,{l_n} \rrbracket| 
}
\]
We have $(e^0_{\sigma_x(i)}\iota_n)_{\sharp}(\iota_1) = e^0_{\sigma_x(i)}\iota_n = (1,\dots , 1, \underset{\sigma_x(i)}{[0,1]},1, \dots, 1)$ and hence
\begin{align*}
\MoveEqLeft{(\phi_{x1} \times \cdots \times \phi_{xn}) }\circ t_{\sigma_x} \circ |(e^0_{\sigma_x(i)}\iota_n)_{\sharp}| ([\iota_1,t])\\ 
&=  {(\phi_{x1} \times \cdots \times \phi_{xn}) }\circ t_{\sigma_x} ([(1,\dots , 1, \underset{\sigma_x(i)}{[0,1]},1, \dots, 1),t])\\
&=   {(\phi_{x1} \times \cdots \times \phi_{xn}) }\circ t_{\sigma_x} (1,\dots , 1, \underset{\sigma_x(i)}{t},1, \dots, 1)\\
&=   {\phi_{x1} \times \cdots \times \phi_{xn} } (1,\dots , 1, \underset{i}{t},1, \dots, 1)\\
&=  (l_1,\dots , l_{i-1}, \phi_{xi}(t),l_{i+1}, \dots, l_{n}).
\end{align*}
Let $j_i\in \{0,\dots,l_{i-1}\}$ and $s\in [0,1]$ be numbers such that $\phi_{xi}(t) = j_i + s = [[j_i,j_i+1],s]$. Then 
\begin{align*}
\MoveEqLeft{(\phi_{x1} \times \cdots \times \phi_{xn}) }\circ t_{\sigma_x} \circ |(e^0_{\sigma_x(i)}\iota_n)_{\sharp}| ([\iota_1,t])\\  
&= [(l_1,\dots , l_{i-1}, [j_i,j_i+1],l_{i+1}, \dots, l_{n}),s] \\
&= [b_i^{j_i},s]\\
&= [b^{0}_{i\sharp} \cdots b^{l_i-1}_{i\sharp}([j_i,j_i+1]),s]\\
&= |b^{0}_{i\sharp} \cdots b^{l_i-1}_{i\sharp}|([[j_i,j_i+1],s])\\
&= |b^{0}_{i\sharp} \cdots b^{l_i-1}_{i\sharp}|\circ \phi_{xi}([\iota_1,t]).
\end{align*}
Since $x_{\sharp} \circ (e^0_{\sigma_x(i)}\iota_n)_{\sharp}(\iota_1)  = x_{\sharp} (e^0_{\sigma_x(i)}\iota_n) =  e^0_{\sigma_x(i)}x_{\sharp} (\iota_n) = e^0_{\sigma_x(i)}x$, we have  \[{x_{\sharp} \circ (e^0_{\sigma_x(i)}\iota_n)_{\sharp}} = (e^0_{\sigma_x(i)}x)_{\sharp}\]  
and therefore
\[f^{\mathbb I}((e^0_{\sigma_x(i)}x)_{\sharp}) = x_{\flat} \circ b^{0}_{i\sharp} \cdots b^{l_i-1}_{i\sharp} = x_{\flat}(b_{i}^0)_{\sharp} \cdots x_{\flat}(b_{i}^{l_i-1})_{\sharp}.\]
Hence
\[
\lambda_{\B} (e^0_{\sigma_x(i)}x) =  \overline{\lambda}_{\B}((e^0_{\sigma_x(i)}x)_{\sharp}) = \overline{\lambda}_{\A}(f^{\mathbb I}((e^0_{\sigma_x(i)}x)_{\sharp}))  =  \lambda_{\A}(x_{\flat}(b_{i}^0)) \cdots \lambda_{\A}(x_{\flat}(b_{i}^{l_i-1})).
\]
Consequently,
\[
\mathfrak{l}_{\B}^1(e^0_{\sigma_x(i)}x)  = \sum_{j_i=0}^{l_i-1}  \sum_{r_i^{j_i}=1}^{|\lambda_{\A}(x_{\flat}(b_{i}^{j_i}))|} \lambda_{\A}(x_{\flat}(b_{i}^{j_i}))_{r_i^{j_i}} = \sum_{j_i=0}^{l_i-1}  \mathfrak{l}_{\A}^1(x_{\flat}(b_{i}^{j_i})).
\]
Thus
\begin{align*}
\mathfrak{l}_{\B}^n(x) &= \mathfrak{l}_{\B}^1(e^0_1x)\wedge \cdots \wedge \mathfrak{l}_{\B}^1(e^0_nx)\\
&= \sgn(\sigma_x) \mathfrak{l}_{\B}^1(e^0_{\sigma_x(1)}x)\wedge \cdots \wedge \mathfrak{l}_{\B}^1(e^0_{\sigma_x(n)}x)\\
&= \sgn(\sigma_x) \left(\sum_{j_1=0}^{l_1-1}  \mathfrak{l}_{\A}^1(x_{\flat}(b_{1}^{j_1})) \right) \wedge  \cdots \wedge \left(\sum_{j_n=0}^{l_n-1}  \mathfrak{l}_{\A}^1(x_{\flat}(b_{n}^{j_n})) \right)\\
&= \sgn(\sigma_x) \sum_{j_1=0}^{l_1-1} \cdots \sum_{j_n=0}^{l_n-1} \mathfrak{l}_{\A}^1(x_{\flat}(b_{1}^{j_1})) \wedge  \cdots \wedge \mathfrak{l}_{\A}^1(x_{\flat}(b_{n}^{j_n})).
\end{align*}

Consider the element $y = ([j_1,j_1+1], \dots ,[j_n,j_n+1]) \in (R_x)_n$.	We have 
\begin{align*}
\mathfrak{l}_{\A}^n(x_{\flat }(y)) 
&= \mathfrak{l}_{\A}^1(e^0_1x_{\flat }(y)) \wedge \cdots \wedge \mathfrak{l}_{\A}^1(e^0_nx_{\flat }(y))\\
&= \mathfrak{l}_{\A}^1(x_{\flat }(e^0_1y)) \wedge \cdots \wedge\mathfrak{l}_{\A}^1(x_{\flat }(e^0_ny)).
\end{align*}
Now, by Lemma \ref{paralleledges},
\begin{align*}
\lambda_{\A}(x_{\flat}(e^0_iy)) 
&= \lambda_{\A}(x_{\flat}(j_1+1, \dots ,j_{i-1}+1, [j_i,j_i+1], j_{i+1}+1, \dots ,j_n+1))\\
&= \lambda_{\A}(x_{\flat}(l_1, \dots ,l_{i-1}, [j_i,j_i+1], l_{i+1}, \dots ,l_n))\\
&= \lambda_{\A}(x_{\flat}(b_i^{j_i})).
\end{align*} 
Thus $\mathfrak{l}_{\A}^1(x_{\flat }(e^0_iy)) = \mathfrak{l}_{\A}^1(x_{\flat}(b_i^{j_i}))$ 
and therefore 
\[
\mathfrak{l}_{\A}^n(x_{\flat }(y)) 
= \mathfrak{l}_{\A}^1(x_{\flat}(b_1^{j_1})) \wedge \cdots \wedge \mathfrak{l}_{\A}^1(x_{\flat}(b_n^{j_n})).
\]
Using Proposition \ref{chainmap}, we obtain 
\[
\mathfrak{l}_{\B}^n (x) = \sgn(\sigma_x) \sum_{y \in (R_x)_n} \mathfrak{l}_{\A}^n(x_{\flat }(y)) = \mathfrak{l}_{\A}^n(f_*(x)). \qedhere 
\]
\end{proof}

As an immediate consequence of Theorem \ref{natweak}, we note the following result: 

\begin{cor}
	Let $\A$ be a $\Sigma^*\mbox{-}$HDA such that the label of some homology class of degree $> 0$ is nonzero. Then there does not exist any cubical dimap from $\A$ to a contractible $\Sigma^*\mbox{-}$HDA.
\end{cor}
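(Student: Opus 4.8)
The plan is to argue by contradiction, extracting the conclusion directly from the naturality relation of Theorem \ref{natweak}. Suppose, for the sake of contradiction, that there did exist a cubical dimap $g$ from $\A$ to a contractible $\Sigma^*\mbox{-}$HDA $\C$. Reading Theorem \ref{natweak} with $\A$ playing the role of the source HDA and $\C$ the role of the target, I would obtain the factorization $\ell_{\A} = \ell_{\C} \circ g_*$ in every degree, where $g_*\colon H_*(\A) \to H_*(\C)$ is the homomorphism induced on cubical homology by the chain map of Proposition \ref{chainmap}.

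The second ingredient is that a contractible HDA has no cubical homology in positive degrees. Here I would invoke Theorems \ref{natquism} and \ref{cubCW}, which identify the cubical homology $H_*(\C)$ with the cellular, and hence singular, homology of the geometric realization $|\C|$. Since $|\C|$ is contractible, its singular homology is concentrated in degree $0$, so that $H_n(\C) = 0$ for all $n > 0$.

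Finally, I would combine these two facts. By hypothesis there is a degree $n > 0$ and a class $\alpha \in H_n(\A)$ with $\ell_{\A}(\alpha) \neq 0$. Because $g_*(\alpha)$ lies in $H_n(\C) = 0$, we have $g_*(\alpha) = 0$, and therefore $\ell_{\A}(\alpha) = \ell_{\C}(g_*(\alpha)) = \ell_{\C}(0) = 0$, contradicting $\ell_{\A}(\alpha) \neq 0$. Hence no such cubical dimap can exist.

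The deduction is essentially immediate from Theorem \ref{natweak}, so there is no substantial obstacle; the only step meriting explicit care—and thus the closest thing to a difficulty—is the justification that ``contractible'' forces the higher cubical homology of the target to vanish. This is precisely the point at which the comparison between cubical homology and the singular homology of the geometric realization, furnished by Theorems \ref{natquism} and \ref{cubCW}, is needed.
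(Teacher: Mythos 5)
Your argument is correct and is exactly the reasoning the paper intends when it presents this corollary as an immediate consequence of Theorem \ref{natweak}: the factorization $\ell_{\A} = \ell_{\C}\circ g_*$ combined with the vanishing of $H_{>0}(\C)$ (justified, as you note, via Theorems \ref{natquism} and \ref{cubCW}) forces all positive-degree labels of $\A$ to be zero. Nothing further is needed.
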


\section{Applications} \label{secApp}

In this section we discuss two potential applications of labeled homology in concurrency theory.

\subsection{Independence} \label{secIndep}

An important global question about a given concurrent system is which subsystems are independent from each other. Let us define here that the $M\mbox{-}$HDAs $\B_1, \dots, \B_n$ are  \emph{independent} in the $M\mbox{-}$HDA $\A$ if there exist a subautomaton $\B$ of $\A$ and a cubical dimap of $M\mbox{-}$HDAs ${|\B_1\otimes \dots \otimes \B_n|} \to |\B|$ that is a homeomorphism. This concept of independence is a variant of the one considered in \cite[5.21]{FGHMR}.

\begin{prop} \label{indep}
Suppose that the $\Sigma^*\mbox{-}$HDAs $\B_i$ $(i = 1, \dots, n)$ are independent in the $\Sigma^*\mbox{-}$HDA $\A$, and consider homology classes $\beta_i\in H_*(\B_i)$ $(i = 1, \dots, n)$. Then there exists a homology class $\alpha \in H_*(\A)$ such that \[\ell_{\A}(\alpha) = {\ell_{\B_1}(\beta_1) \wedge \dots \wedge \ell_{\B_n}(\beta_n)}.\]
\end{prop}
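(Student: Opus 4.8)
The plan is to assemble the result from the three main tools already at our disposal: the cross-product formula for labels (Theorem \ref{crosslabel}), the naturality of labeled homology with respect to cubical dimaps (Theorem \ref{natweak}), and the fact that the inclusion of a subautomaton is a morphism of $\Sigma^*$-HDAs whose geometric realization is an elementary cubical dimap. The strategy is to build a single class in $H_*(\A)$ by transporting the external product of the classes $\beta_i$ first through the homeomorphism and then through the subautomaton inclusion supplied by the definition of independence.

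First I would form the iterated homology cross product $\beta = \beta_1 \times \dots \times \beta_n \in H_*(\B_1 \otimes \dots \otimes \B_n)$, using the associativity of $\zeta$ and hence of $\times$. The one genuine computation is the extension of Theorem \ref{crosslabel} from two factors to $n$ factors, which is a straightforward induction: writing $\beta = \beta_1 \times (\beta_2 \times \dots \times \beta_n)$ and applying Theorem \ref{crosslabel} together with the associativity of the exterior product gives
\[
\ell_{\B_1 \otimes \dots \otimes \B_n}(\beta) = \ell_{\B_1}(\beta_1) \wedge \dots \wedge \ell_{\B_n}(\beta_n).
\]
Since the cross-product formula carries no sign, no sign bookkeeping is needed in the iteration.

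Next I would use the data provided by independence: a subautomaton $\B$ of $\A$ and a cubical dimap $g \colon |\B_1 \otimes \dots \otimes \B_n| \to |\B|$ that is a homeomorphism. Viewing $g$ as a cubical dimap from the HDA $\B_1 \otimes \dots \otimes \B_n$ to the HDA $\B$, Theorem \ref{natweak} yields $\ell_{\B_1 \otimes \dots \otimes \B_n} = \ell_{\B} \circ g_*$, so setting $\alpha' = g_*(\beta) \in H_*(\B)$ gives $\ell_{\B}(\alpha') = \ell_{\B_1}(\beta_1) \wedge \dots \wedge \ell_{\B_n}(\beta_n)$. Finally, the inclusion $j \colon \B \hookrightarrow \A$ is a morphism of $\Sigma^*$-HDAs, and its realization is an elementary cubical dimap (cf. Example \ref{dimapexs}(i)); applying Theorem \ref{natweak} once more gives $\ell_{\B} = \ell_{\A} \circ j_*$. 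Thus $\alpha = j_*(\alpha') \in H_*(\A)$ satisfies $\ell_{\A}(\alpha) = \ell_{\B}(\alpha') = \ell_{\B_1}(\beta_1) \wedge \dots \wedge \ell_{\B_n}(\beta_n)$, as required.

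The argument involves no serious obstacle; it is essentially a chase through functoriality. The only point requiring care is the direction of the naturality maps in Theorem \ref{natweak}—the labeling of the source HDA equals that of the target precomposed with the induced map—together with the bookkeeping of which HDA is source and which is target in each application (first $\B_1 \otimes \dots \otimes \B_n \to \B$ via $g$, then $\B \to \A$ via $j$). The only mild technical preliminary is the $n$-fold version of Theorem \ref{crosslabel}, which I would either record as a remark or fold directly into the proof by induction.
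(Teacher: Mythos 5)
Your proposal is correct and follows exactly the route the paper intends: the paper's proof is the one-line remark that the result "follows immediately from Theorems \ref{crosslabel} and \ref{natweak}," and your argument is precisely the careful unpacking of that — iterating the cross-product label formula, then applying naturality first to the homeomorphism from the independence data and then to the subautomaton inclusion. No discrepancies to report.
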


\begin{proof}
This follows immediately from Theorems \ref{crosslabel} and \ref{natweak}.
\end{proof}

\begin{ex}
Let us consider the following version of the classical \emph{dining philosophers} system \cite{DijkstraHierarchical}: Four philosophers sit at a round table having dinner. A chopstick is placed to the left of each philosopher. In order to eat, a philosopher needs this chopstick and also the one on the right. Each philosopher alternately thinks and eats according to the following instructions: 
\begin{itemize}
	\item Think.
	\item Wait until the left chopstick is free and pick it up.
	\item Wait until the right chopstick is free and pick it up.
	\item Eat.
	\item Put the left chopstick down.
	\item Put the right chopstick down. 
	\item Repeat the procedure from the beginning.
\end{itemize}
At the beginning of the dinner, all chopsticks are free. 

The reachable part of the state space of the dining philosophers system may be modeled by a $\Sigma ^*\mbox{-}$HDA $\A$ where the alphabet $\Sigma$ is given by 
\[\Sigma = \{\mathtt{think_i}, \mathtt{eat_i}, \mathtt{pick\_l_i}, \mathtt{put\_l_i}, \mathtt{pick\_r_i},\mathtt{put\_r_i} \,|\, \mathtt{i} \in \{0,1,2,3\}\}.\]
Here the indexes correspond, of course, to the four philosophers. The HDA $\A$ is of dimension 4 and has 465 vertices, 1508 edges, 1766 2-cubes, 884 3-cubes, and 160 4-cubes. 

For each $i \in\{0,1,2,3\}$, the HDA $\A$ contains a subautomaton $\B_i$ corresponding to the subsystem where only the $i$th philosopher has dinner and the others remain inactive in their initial states. The HDA $\B_i$ is a directed circle and has a 1$\mbox{-}$dimensional homology generator with label
\[
l_i = \mathtt{think_i} + \mathtt{pick\_l_i} + \mathtt{pick\_r_i} + \mathtt{eat_i} + \mathtt{put\_l_i} + \mathtt{put\_r_i}.
\]
By Theorem \ref{natweak}, it follows that $\A$ has four 1-dimensional homology classes with labels $l_0$, $l_1$, $l_2$, and $l_3$, respectively. Since these labels are linearly independent, so are the homology classes.

If two philosophers, say philosophers $i$ and $j$, are not neighbors at the table, they eat independently because they do not share any chopstick. Consequently, $\B_i$ and $\B_j$ are independent in $\A$. By Proposition \ref{indep}, it follows that $\A$ has a 2-dimensional homology class with label $l_i\wedge l_j$. Since the other two philosophers are also not neighbors, $\A$ has a second 2-dimensional homology class of this type, and the two homology classes are linearly independent because their labels are.

Computing the $\Z_2$-homology of $\A$ using the software CHomP \cite{chomp}, one obtains that $H_{n}(\A) = 0$ for $n>2$, $\dim H_0(\A) = 1$, $\dim H_1(\A) = 4$, and $\dim H_2(\A) = 2$. We have thus completely determined the labeled  $\Z_2\mbox{-}$homology of $\A$.

Suppose that philosophers $i$ and $j$ are neighbors at the table. Then, as one might expect, the HDAs $\B_i$ and $\B_j$ are not independent in $\A$. Otherwise, by Proposition \ref{indep}, $l_i\wedge l_j$ would be the label of a 2-dimensional homology class of $\A$, which is not the case. Note that the reason for the dependence of $\B_i$ and $\B_j$ is not just that philosophers $i$ and $j$ share a resource. Indeed, as shows Example \ref{specex} below, processes that share resources may very well be independent.
\end{ex}

\subsection{Specification} \label{secSpec}
\begin{sloppypar}
Consider the practical problem to decide whether an $M\mbox{-}$HDA $\A$ implements a certain specification that is given by another $M\mbox{-}$HDA $\mathcal S$. Let us say here that $\A$ \emph{implements} $\mathcal S$ if there exists a finite sequence of cubical dimaps of $M\mbox{-}$HDAs
\[
|\A| \to \cdot \xleftarrow{\sim} \cdot \to \cdots  \xleftarrow{\sim} \cdot \to {|\mathcal S|}
\]
where the cubical dimaps indicated by $\xleftarrow{\sim}$ are homotopy equivalences that could also be required to preserve relevant constructions such as the trace category and the homology graph (see \cite{hgraph,weakmor,topabs}). This implementation relation can be seen as a kind of simulation preorder that permits one to relate HDAs of different refinement levels. 
\end{sloppypar}

\begin{prop} \label{spec}
Let $\A$ and $\mathcal S$ be $\Sigma^ *$-HDAs such that $\A$ implements $\mathcal S$. Then for each homology class $\alpha \in H_*(\A)$, there exists a homology class $\beta \in H_*(\mathcal S)$ such that $\ell_{\mathcal S}(\beta) = \ell_{\A}(\alpha)$.
\end{prop}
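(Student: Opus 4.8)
The plan is to transport the class $\alpha$ stepwise along the implementation zigzag, using the naturality of Theorem \ref{natweak} to keep its label unchanged at every stage. Write the zigzag as
\[
|\A| = |\mathcal H_0| \to |\mathcal H_1| \xleftarrow{\sim} |\mathcal H_2| \to \cdots \xleftarrow{\sim} |\mathcal H_{N-1}| \to |\mathcal H_N| = |\mathcal S|,
\]
where each arrow is a cubical dimap of $\Sigma^*$-HDAs and each arrow marked $\xleftarrow{\sim}$ is a homotopy equivalence. I would construct, by induction on $i$, homology classes $\alpha_i \in H_*(\mathcal H_i)$ with $\ell_{\mathcal H_i}(\alpha_i) = \ell_{\A}(\alpha)$, starting from $\alpha_0 = \alpha$; the class $\beta = \alpha_N$ is then the desired class in $H_*(\mathcal S)$.

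For a \emph{forward} arrow, i.e. a cubical dimap $f\colon |\mathcal H_i| \to |\mathcal H_{i+1}|$ from $\mathcal H_i$ to $\mathcal H_{i+1}$, I would simply push $\alpha_i$ forward by the induced map on cubical homology and set $\alpha_{i+1} = f_*(\alpha_i)$. Since Theorem \ref{natweak} gives $\ell_{\mathcal H_i} = \ell_{\mathcal H_{i+1}} \circ f_*$, we get $\ell_{\mathcal H_{i+1}}(\alpha_{i+1}) = \ell_{\mathcal H_i}(\alpha_i) = \ell_{\A}(\alpha)$, so the label is preserved. For a \emph{backward} arrow, i.e. a cubical dimap $g\colon |\mathcal H_{i+1}| \to |\mathcal H_i|$ from $\mathcal H_{i+1}$ to $\mathcal H_i$ that is a homotopy equivalence, I want to move from $\alpha_i \in H_*(\mathcal H_i)$ to a class in $H_*(\mathcal H_{i+1})$, so I must invert $g$ on homology. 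The induced map $g_*\colon H_*(\mathcal H_{i+1}) \to H_*(\mathcal H_i)$ is an isomorphism: a cubical dimap is cellular, so by the definition of the chain map induced by a cellular map together with the natural chain isomorphism $\cell$ of Theorem \ref{cubCW} (and the comparison with singular chains in Theorem \ref{natquism}), $g_*$ corresponds to the map induced by $g$ on singular homology, which is an isomorphism because $g$ is a homotopy equivalence. I would therefore set $\alpha_{i+1} = g_*^{-1}(\alpha_i)$; applying Theorem \ref{natweak} to $g$ gives $\ell_{\mathcal H_{i+1}} = \ell_{\mathcal H_i} \circ g_*$, whence $\ell_{\mathcal H_{i+1}}(\alpha_{i+1}) = \ell_{\mathcal H_i}(g_*(\alpha_{i+1})) = \ell_{\mathcal H_i}(\alpha_i) = \ell_{\A}(\alpha)$. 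Iterating these two moves along the whole zigzag yields $\beta = \alpha_N \in H_*(\mathcal S)$ with $\ell_{\mathcal S}(\beta) = \ell_{\A}(\alpha)$.

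The only content beyond the naturality already proved in Theorem \ref{natweak} lies in handling the wrong-way (backward) arrows, and I expect this to be the main point to treat carefully: one must verify that a cubical dimap which is a homotopy equivalence induces an \emph{isomorphism} on cubical homology, so that it can be inverted. This is exactly where the identification of cubical homology with singular homology via $\cell$ and the results of Theorems \ref{natquism} and \ref{cubCW} is used; the forward arrows require no inversion and follow immediately. Everything else is a routine induction along the finite zigzag.
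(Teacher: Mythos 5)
Your proposal is correct and is exactly the intended argument: the paper's proof consists of the single line ``This follows immediately from Theorem \ref{natweak},'' and your zigzag transport --- pushing forward along the cubical dimaps via $\ell_{\B} = \ell_{\A}\circ f_*$ and inverting $g_*$ on homology for the backward homotopy equivalences (legitimate because cubical homology is identified with cellular, hence singular, homology via $\cell$) --- is precisely the unpacking of that remark.
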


\begin{proof}
This follows immediately from Theorem \ref{natweak}.
\end{proof}

\begin{ex} \label{specex}
As an example, consider the task to implement a locking mechanism for two processes by incrementing and decrementing an integer variable $\mathtt{x}$ using the operators $\mathtt{x{\scriptstyle{++}}}$ and $\mathtt{x{\scriptstyle{--}}}$, respectively. These operators are to be interpreted in such a way that $\mathtt{x{\scriptstyle{++}}}$ means to acquire the lock and $\mathtt{x{\scriptstyle{--}}}$ means to release it. In order to specify the task more formally, consider the alphabet 
\[
\Sigma = \{\mathtt{x{\scriptstyle{++}}_0}, \mathtt{x{\scriptstyle{++}}_1}, \mathtt{x{\scriptstyle{--}}_0}, \mathtt{x{\scriptstyle{--}}_1}\},
\]
where the indexes in the instructions are the process IDs, and the $\Sigma ^*\mbox{-}$HDA $\mathcal S$ given by a wedge of two directed circles, one labeled with the string $\mathtt{x{\scriptstyle{++}}_0};\mathtt{x{\scriptstyle{--}}_0}$ and the other with the string $\mathtt{x{\scriptstyle{++}}_1};\mathtt{x{\scriptstyle{--}}_1}$. Let us suppose now we had the (bad) idea to implement this locking mechanism through a concurrent program where both processes execute a loop consisting of the instruction sequence $\mathtt{x{\scriptstyle{++}}};\mathtt{x{\scriptstyle{--}}}$. Assuming that $\mathtt{x}$ is initially $0$, the HDA $\A$ corresponding to the reachable part of the state space of this program is a directed torus, and there exists a 2-dimensional homology class with label
\[
(\mathtt{x{\scriptstyle{++}}_0} + \mathtt{x{\scriptstyle{--}}_0})\wedge (\mathtt{x{\scriptstyle{++}}_1}+ \mathtt{x{\scriptstyle{--}}_1}),
\]
which is nonzero. Since the 2-dimensional homology of $\mathcal S$ is trivial, Proposition \ref{spec} implies that $\A$ does not implement $\mathcal S$. The error in our program is, of course, that we forgot to include a guard condition that forces the processes to wait until the variable $\mathtt{x}$ is $0$ before they increment it. 
\end{ex}

\bibliography{refs}

\newcommand{\etalchar}[1]{$^{#1}$}
\providecommand{\bysame}{\leavevmode\hbox to3em{\hrulefill}\thinspace}
\providecommand{\MR}{\relax\ifhmode\unskip\space\fi MR }
\providecommand{\MRhref}[2]{%
  \href{http://www.ams.org/mathscinet-getitem?mr=#1}{#2}
}
\providecommand{\href}[2]{#2}
\begin{thebibliography}{FGH{\etalchar{+}}16}

\bibitem[DGGL16]{DubutGG}
J.~Dubut, E.~Goubault, and J.~Goubault-Larrecq, \emph{{Directed Homology
  Theories and Eilenberg-Steenrod Axioms}}, Applied Categorical Structures
  (2016), 1--33.

\bibitem[Dij71]{DijkstraHierarchical}
E.W. Dijkstra, \emph{{Hierarchical ordering of sequential processes}}, Acta
  Informatica \textbf{1} (1971), no.~2, 115--138.

\bibitem[Fah04]{FahrenbergDiH}
U.~Fahrenberg, \emph{{Directed Homology}}, Electronic Notes in Theoretical
  Computer Science \textbf{100} (2004), 111--125.

\bibitem[FGH{\etalchar{+}}16]{FGHMR}
L.~Fajstrup, E.~Goubault, E.~Haucourt, S.~Mimram, and M.~Raussen,
  \emph{{Directed Algebraic Topology and Concurrency}}, Springer, 2016.

\bibitem[FRG06]{FajstrupGR}
L.~Fajstrup, M.~Rau{\ss}en, and E.~Goubault, \emph{{Algebraic topology and
  concurrency}}, Theoretical Computer Science \textbf{357} (2006), 241--278.

\bibitem[Gau05]{GaucherHomol}
P.~Gaucher, \emph{{Homological properties of non-deterministic branchings and
  mergings in higher dimensional automata}}, Homology, Homotopy and
  Applications \textbf{7} (2005), no.~1, 51--76.

\bibitem[GJ92]{GoubaultJensen}
E.~Goubault and T.P. Jensen, \emph{{Homology of higher-dimensional automata}},
  {Proc. of CONCUR '92}, Lecture Notes in Computer Science, vol. 630, Springer,
  1992, pp.~254--268.

\bibitem[Gra09]{GrandisBook}
M.~Grandis, \emph{{Directed Algebraic Topology: Models of Non-Reversible
  Worlds}}, New Mathematical Monographs, vol.~13, Cambridge University Press,
  2009.

\bibitem[Hat01]{Hatcher}
A.~Hatcher, \emph{{Algebraic Topology}}, Cambridge University Press, 2001.

\bibitem[HW67]{HiltonWylie}
P.J. Hilton and S.~Wylie, \emph{{Homology Theory: An Introduction to Algebraic
  Topology}}, Cambridge University Press, 1967.

\bibitem[Kah14a]{hgraph}
T.~Kahl, \emph{{The homology graph of a precubical set}}, Homology, Homotopy
  and Applications \textbf{16} (2014), no.~1, 119--138.

\bibitem[Kah14b]{weakmor}
\bysame, \emph{{Weak morphisms of higher dimensional automata}}, Theoretical
  Computer Science \textbf{536} (2014), 42--61.

\bibitem[Kah16]{topabs}
\bysame, \emph{{Topological abstraction of higher-dimensional automata}},
  Theoretical Computer Science \textbf{631} (2016), 97--117.

\bibitem[Kah18]{transhda}
\bysame, \emph{{Higher-dimensional automata modeling shared-variable systems}},
  arXiv:1801.08451 [cs.LO] (2018), 1--21.

\bibitem[Kri14]{KrishnanFlowCut}
S.~Krishnan, \emph{{Flow-Cut Dualities for Sheaves on Graphs}}, arXiv:1409.6712
  [math.AT] (2014), 1--29.

\bibitem[Mas80]{Massey}
W.S. Massey, \emph{{Singular Homology Theory}}, Graduate Texts in Mathematics,
  vol.~70, Springer-Verlag, 1980.

\bibitem[Pet81]{Peterson}
G.L. Peterson, \emph{{Myths about the mutual exclusion problem}}, Information
  Processing Letters \textbf{12} (1981), no.~3, 115--116.

\bibitem[Pil18]{chomp}
P.~Pilarczyk, \emph{{CHomP \text{\normalfont [Computer software]}}},
  \url{{http://chomp.rutgers.edu/Projects/Computational_Homology/OriginalCHomP/software/}},
  2002-2018.

\bibitem[Pra91]{Pratt}
V.~Pratt, \emph{{Modeling Concurrency with Geometry}}, {POPL '91, Proceedings
  of the 18th ACM SIGPLAN-SIGACT symposium on Principles of programming
  languages}, ACM New York, NY, USA, 1991, pp.~311--322.

\bibitem[vG06]{vanGlabbeek}
R.J. van Glabbeek, \emph{{On the expressiveness of higher dimensional
  automata}}, Theoretical Computer Science \textbf{356} (2006), no.~3,
  265--290.

\end{thebibliography}
\bibliographystyle{amsalpha}
\end{document}